\documentclass{amsart}
\usepackage{amsmath}
\usepackage{amsthm}
\usepackage{amssymb}
\usepackage{graphics}
\usepackage{graphicx}
\usepackage{bm}

\usepackage{amsfonts}
\usepackage{latexsym}
\usepackage{mathrsfs}
\usepackage{eucal}

\usepackage{color}




\newcommand{\be}{\begin{equation}}
\newcommand{\ee}{\end{equation}}
\newcommand{\ba}{\begin{eqnarray}}
\newcommand{\ea}{\end{eqnarray}}
\newcommand{\bi}{\begin{itemize}}
\newcommand{\ei}{\end{itemize}}
\newcommand{\bn}{\begin{enumerate}}
\newcommand{\en}{\end{enumerate}}
\newcommand{\bbm}{\begin{bmatrix}}
\newcommand{\ebm}{\end{bmatrix}}
\newcommand{\bpm}{\begin{pmatrix}}
\newcommand{\epm}{\end{pmatrix}}
\newcommand{\bp}{\begin{proof}}
\newcommand{\ep}{\end{proof}}
\newcommand{\nn}{\nonumber}


\newcommand{\mr}{\ensuremath{\mathrm}}
\newcommand{\scr}{\ensuremath{\mathscr}}
\newcommand{\mbf}{\ensuremath{\mathbf}}
\newcommand{\mc}{\ensuremath{\mathcal}}

\newcommand{\ov}{\ensuremath{\overline}}


\newcommand{\Ga}{\ensuremath{\Gamma}}
\newcommand{\ga}{\ensuremath{\gamma}}
\newcommand{\Om}{\ensuremath{\Omega}}

\newcommand{\la}{\ensuremath{\lambda }}


\def\C{\mathbb{C}}

\def\D{\mathbb{D}}

\def\N{\mathbb{N}}
\def\B{\mathbb{B}}

\renewcommand{\H}{\ensuremath{\mathcal{H} }}
\newcommand{\J}{\ensuremath{\mathcal{J} }}

\newcommand{\K}{\ensuremath{\mathcal{K} }}
\renewcommand{\L}{\ensuremath{\mathcal{L} }}

\newcommand{\F}{\ensuremath{\mathbb{F} }}


\def\kz{K \{ Z , y , v \}}
\def\kw{K \{ W , x , u \}}
\newcommand{\zyv}[1]{\ensuremath{\left\{ {#1}  \right\} }}

\def\tr{\mathrm{tr}}

\newcommand\ssim{\mathrel{%
  \ooalign{\raise0.4ex\hbox{\scalebox{0.8}{$\subset$}}\cr\hidewidth\raise-0.6ex\hbox{\scalebox{0.9}{$\sim$}}\hidewidth\cr}}}


\newcommand{\ip}[2]{\ensuremath{\left\langle {#1} , {#2} \right\rangle}}
\newcommand{\ipcn}[2]{\ensuremath{\left( {#1} , {#2} \right) _{\C ^n}}}
\newcommand{\ipc}[2]{\ensuremath{\left( {#1} , {#2} \right) _{\C ^{n+1}}}}
\newcommand{\dom}[1]{\ensuremath{\mathrm{Dom} ({#1}) }}
\renewcommand{\dim}[1]{\ensuremath{\mathrm{dim} \left( {#1} \right) }}
\newcommand{\ran}[1]{\ensuremath{\mathrm{Ran} \left( {#1} \right) }}
\renewcommand{\ker}[1]{\ensuremath{\mathrm{Ker} \left( {#1} \right) }}

\newcommand{\re}[1]{\ensuremath{\mathrm{Re} \left( {#1} \right) }}


\numberwithin{equation}{section}
\numberwithin{subsection}{section}

\newtheorem{thm}[subsection]{Theorem}
\newtheorem*{thm*}{Theorem}

\newtheorem{lemma}[subsection]{Lemma}
\newtheorem{prop}[subsection]{Proposition}
\newtheorem{cor}[subsection]{Corollary}

\theoremstyle{definition}
\newtheorem{defn}[subsection]{Definition}
\newtheorem{remark}[subsection]{Remark}


\title[Operators affiliated to the free shift]{Operators affiliated to the free shift on the free Hardy space}

\author{Michael T. Jury}
\address{University of Florida}
\email{mjury@ufl.edu}

\author{Robert T.W. Martin}
\address{University of Cape Town}
\email{rtwmartin@gmail.com}

\begin{document}
\small
\date{\today}
\bibliographystyle{plain}
\maketitle

\begin{abstract}
The Smirnov class for the classical Hardy space is the set of ratios of bounded analytic functions on the open complex unit disk with outer denominators. This definition extends naturally to the commutative and non-commutative multi-variable settings of the Drury-Arveson space and the full Fock space over $\C ^d$. Identifying the Fock space with the free multi-variable Hardy space of non-commutative or free holomorphic functions on the non-commutative open unit ball, we prove that any closed, densely-defined operator affiliated to the right free multiplier algebra of the full Fock space acts as right rmultiplication by a function in the right free Smirnov class (and analogously, replacing "right" with "left"). 
\end{abstract}

\section{Introduction}

As shown in \cite{Suarez}, any densely-defined closed linear operator, $T$, commuting with multiplication by $z$ on $H^2 (\D )$, the Hardy space of the unit disk, can be viewed as
$T = \ov{M _{b/a}}$, the closure of multiplication by $b/a$ where $a,b$ are bounded analytic functions in the open complex unit disk, $\D$, and $a$ is outer. That is, $b/a$ belongs to the \emph{Smirnov class} $\scr{N} ^+ $:
$$ \scr{N} ^+  = \left\{ \left. \frac{b}{a} \right| \ a,b \in H^\infty (\D ), \ \ov{\ran{M_a}} = H^2 (\D ) \right\} = \frac{ H^\infty (\D ) }{\scr{O} ^\infty (\D ) }. $$ Here, $\scr{O} ^\infty (\D)$ denotes the outer functions in $H^\infty (\D )$, the algebra of bounded analytic functions in $\D$. In particular, also as proven in \cite{Suarez}, $H^2 (\D) \subset \scr{N} ^+$: Given any $h \in H^2 (\D)$, one can find $b,a \in H^\infty (\D)$ with $a$ an outer function so that $1/a \in H^2 (\D)$ and 
$h = b/a $.  Recall that $a \in H^\infty (\D)$ is outer if multiplication by $a$, $M_a : H^2 (\D ) \rightarrow H^2 (\D )$ has dense range; equivalently $a \in H^2 (\D) \cap H^\infty (\D)$ is cyclic for the \emph{shift}, $S$, the operator of multiplication by $z$. This result was extended to the closed operators affiliated to any compressed shift $S_u$, where $u \in H^\infty (\D)$ is inner (compressed to the range of multiplication by $u$) by Sarason in \cite{Sarason-ub}. Also see \cite{Mar2009,Berc2009} for extensions to the class of $C_0$ contractions.

The Drury-Arveson space, $H^2 _d$, can be viewed as the several-variable Hardy space, it is the unique reproducing kernel Hilbert space (RKHS) associated to the several-variable Szeg\"{o} kernel on the multi-variable open unit ball $\B ^d := (\C ^d ) _1$. In \cite[Theorem 10.3]{Alpay}, it was shown that elements of $H^2 _d$ can also be identified with elements of the several-variable Smirnov class, $\scr{N} _d ^+$.  Namely, given any $h \in H^2 _d$,
there are Drury-Arveson multipliers $b, a$ so that
$$ h = \frac{a}{1-b}, $$ $b$ is a contractive multiplier, $b(0) =0$ and $1-b$ is outer \cite[Theorem 1.1]{Smirnov}. (Recall that if $b$ is contractive then $1-b$ is necessarily \emph{outer}, \emph{i.e.} $M_{(1-b)}$ has dense range in $H^2 _d$ \cite[Lemma 2.3]{Smirnov}).  In \cite{JM-H2Smirnov} we gave a different proof of the inclusion $H^2 _d\subset \scr{N} _d ^+$, which showed that every $h\in H^2_d$ can be expressed in the form $h=b/a$, where $a,b$ are bounded multipliers, $a$ is outer and, in addition, $1/a\in H^2_d$. Our proof was based on the proof of an analogous fact for free Hardy space functions, which is part of the motivation for the present paper.

In this paper we extend these results to the non-commutative, multi-variable setting of the full Fock space, $F^2 _d$ over $\C ^d$.  The full Fock space can be regarded as the canonical non-commutative multi-variable Hardy space. Indeed, one can identify $F^2 _d$ with $H^2 (\B ^d _\N )$, the \emph{free Hardy space} of free holomorphic functions on the non-commutative multi-variable open unit ball $\B ^d _\N := \bigsqcup _{n=1} ^\infty \left( \C ^{n\times n} \otimes \C ^d \right) _1$ \cite{KVV,BMV,Pop-freeholo}. In direct analogy to the classical setting, $H^2 (\B ^d _\N )$ can be defined as the unique non-commutative reproducing kernel Hilbert space (NC-RKHS) on the non-commutative (NC) domain $\B ^d _\N$ corresponding to a natural completely positive non-commutative (CPNC) kernel, the NC Szeg\"{o} kernel. We will briefly recall the basics of non-commutative function theory and NC-RKHS theory in the upcoming Section \ref{prelim}. 

  In \cite{JM-H2Smirnov} we showed that if $T$ is a closed, densely deifned operator in the free Hardy space, which acts by left multiplication by some free function $H$, then $H$ must be a so-called {\em free Smirnov} function (see Definition~\ref{def:free-smirnov} below). The main goal of the present paper is to obtain the same conclusion from a weaker hypothesis--we will assume only that $T$ is closed, densely defined, and commutes with the free shift, and conclude that $T$ acts as multiplication by $H$ for some Smirnov $H$ (there are analagous left and right versions).  Even in the one variable case it takes some work to show that these hypotheses imply that $T$ is given by a multiplication, see \cite{Suarez} .  The lack of commutativity means that the one-variable proof given in \cite{Suarez} does not carry over. Our proof will exploit the free Beurling theorem in the same way as \cite{JM-H2Smirnov}, the main difficulty under the present weaker hypotheses is in showing that the relevant wandering subspace is one-dimensional. For this we exploit some general results about free deBranges-Rovnyak type spaces and column extreme free multipliers developed in \cite{JMfreeNC}.

\section{Preliminaries} \label{prelim}
All Hilbert space inner products will be conjugate linear in their first argument. If $X$ is a Banach space, $(X)_1$ and $[X]_1$ denote the open and closed unit balls of $X$, respectively.

\subsection{The full Fock space}

Recall that the full Fock space over $\C ^d$, $F^2 _d$, is the direct sum of all tensor powers of $\C ^d$:
\ba F^2 _d & := & \C \oplus \left( \C ^d \otimes \C ^d \right) \oplus \left( \C ^d \otimes \C ^d \otimes \C ^d \right) \oplus \cdots \nn \\
& =& \bigoplus _{k=0} ^\infty \left( \C ^d \right) ^{k \cdot \otimes }. \nn \ea Fix an orthonormal basis $\{ e_1, ... , e_d \}$ of $\C ^d$. The left creation
operators $L_1, ..., L_d$ are the operators which act as tensoring on the left by these basis vectors:
$$ L_k f := e_k \otimes f; \quad \quad f \in F^2 _d, $$ and similarly the right creation operators $R_k; \ 1\leq k \leq d$ are defined by tensoring on the right $$ R_k f := f \otimes e_k. $$ The left and right free shifts are the row operators $L := (L_1 , ... , L_d)$ and $R := (R_1, ... , R _d )$ which map
$F^2 _d \otimes \C ^d$ into $F^2 _d$. Both $L, R$ are in fact row isometries: $L ^* L = I_{F^2} \otimes I_d = R^* R$. It follows that the component shifts are also isometries with pairwise orthogonal ranges.  The orthogonal complement of the range of $L$ or $R$ is the vacuum vector $1$ which spans the the subspace $\C =: (\C^d) ^{0\cdot \otimes} \subset F^2 _d$. A canonical orthonormal basis for $F^2 _d$ is then $\{ e_\alpha \} _{\alpha \in \F ^d}$ where $e_\alpha = L^\alpha 1 = R^\alpha 1$ and $\F ^d$ is the free unital semigroup or monoid on $d$ letters. Here, if $\alpha = i_1 \cdots i_n \in \F ^d$, we use the standard notation $L^\alpha = L_{i_1} L_{i_2} \cdots L_{i_n}$.

Recall here that the free monoid, $\F^d$, on $d \in \N$ letters, is the multiplicative semigroup of all finite products or \emph{words} in
the $d$ letters $\{1, ... , d \}$. That is, given words $\alpha := i_1 ... i_n$, $\beta := j_1 ... j_m$, $i_k, j_l \in \{1 , ... , d \}; \ 1\leq k \leq n, \ 1 \leq l \leq m$, their product $\alpha \beta $ is defined by concatenation:
$$ \alpha \beta = i_1 ... i_nj_1 ... j_m, $$ and the unit is the empty word, $\emptyset$, containing no letters. Given $\alpha = i_1 \cdots i_n$, we use the standard notation $|\alpha | = n$ for the length of the word $\alpha$. The transpose map $\dag : \F ^d \rightarrow \F ^d$, defined by 
$$  i_1 \cdots i_d =\alpha \mapsto \alpha ^\dag := i_d \cdots i_1,  \quad \quad \mbox{is an involution.} $$ 

Define $L^\infty _d := \mr{Alg}(I,L) ^{-WOT}, \ R^\infty _d := \mr{Alg}(I,L) ^{-WOT}$, the left (\emph{resp.} right) free analytic Toeplitz algebra.  The \emph{transpose unitary}, $U _{\dag} : F^2 _d \rightarrow F^2 _d$, defined by $e_{\alpha } \mapsto e_{\alpha ^{\dag}}$ is a unitary involution of $F^2 _d$, and it is easy to verify that
$$ U_\dag L_k U_\dag^* = R_k, $$ so that adjunction by $U_\dag$ implements a unitary isomorphism between $L^\infty _d$ and $R^\infty _d$.

\subsection{The free Hardy space}

It will be convenient to view $F^2 _d$ as a non-commutative reproducing kernel Hilbert space (NC-RKHS) \cite{BMV} of freely non-commutative holomorphic functions on the non-commutative open unit ball \cite{KVV} (we will use the same notations as in \cite{JMfreeNC}):
$$ \B _\N ^d := \bigsqcup _{n=1} ^\infty \B ^d_n ; \quad \quad \B ^d_n   := \left( \C ^{n \times n} \otimes \C ^d \right) _1. $$ Elements of $\B ^d_n $ are viewed as strict row contractions on $\C ^n$.
Recall that for any complex vector space $V$,
$$ V_{nc} := \bigsqcup V_n; \quad \quad V_n := V \otimes \C ^{n\times n} =: V^{n \times n}. $$
The NC unit ball $\B ^d _\N$ is an example of a NC set: A set $\Om \subseteq V _{nc}$ is an NC set if it is closed under direct sums, and one writes:
$$ \Om =: \bigsqcup \Om _n; \quad \quad \Om _n := \Om \cap V_n. $$ A function $f : \Om \rightarrow \C _{nc}$ is called a NC or \emph{free function} if: 
$$ f : \Om _n \rightarrow \C ^{n\times n}; \quad \quad f \ \mbox{respects the grading}, $$ and
if $X \in \C ^{n\times m }, Z \in X_n , W \in X _m$ obey $ZX = XW$, then, 
$$ f(Z) X = X f(W); \quad \quad f \ \mbox{respects intertwinings.}$$

As shown in \cite{BMV}, $F^2 _d =  H^2 (\B _\N ^d )$ can be viewed as the \emph{free Hardy space} of the multi-variable NC unit ball $\B _\N ^d$, \emph{i.e.}
$H^2 (\B _\N ^d) = \H _{nc} (K)$ is the unique NC-RKHS corresponding to the NC-Szeg\"{o} kernel:
$K : \B _\N ^d \times \B _\N ^d \rightarrow  \L (\C _{nc} )$ defined by:
$$ K (Z, W)  [P ] := \sum _{\alpha \in \F ^d} Z^\alpha P (W^* ) ^{\alpha ^\dag}; \quad \quad Z \in \B ^d_n  , W \in \B ^d_m , P \in \C ^{n\times m}.$$
See \cite{BMV} for the full definition and theory of NC kernels. In particular, any NC kernel respects the grading and intertwinings in both arguments \cite[Section 2.3]{BMV}.

One can show that elements of $H^2 (\B _\N ^d ) := \H _{nc} (K)$ are locally bounded (and hence automatically) holomorphic free functions on $\B ^d _\N$ \cite[Chapter 7]{KVV}.  That is, any $f\in H^2 (\B _\N ^d )$ is Fr\'{e}chet and  G\^{a}teaux differentiable at any point $Z \in \B ^d _\N$ and $f$ has a convergent power series expansion (Taylor-Taylor series) about any point. 

(Generally any) $\H _{nc} (K)$ is formally defined as the Hilbert space completion of the linear span:
$$ \bigvee _{Z \in \B ^d _n, \ y,v \in \C ^n } \kz, $$ where the $\kw$ are the free functions on $\B ^d _\N$, $ \kw : \B ^d _n \rightarrow \C ^{n\times n}$, defined by:
$$ \kw (Z) y := K(Z,W) [yu^*] x; \quad \quad W \in \B ^d _m, \ Z \in \B ^d _n; \ u,x \in \C ^m, \ y \in \C ^n.$$ Completion is with respect to the inner product:
\begin{align*} & \ip{\kz}{\kw}  :=   \ipcn{y}{K(Z,W)[vu^*]x}; \\
& Z \in \B ^d_n , \ v , y \in \C ^n; \ W \in \B ^d _m, \ u,x \in \C ^m. \end{align*}

These point evaluation vectors have a familiar reproducing property: $K(Z,y,v)$ is the unique vector in $H^2 (\B _\N ^d )$ such that for any $f \in H^2 (\B _\N ^d )$,
\begin{equation}\label{eqn:kz-reproducing-formula}
\ip{\kz }{f}  = \ipcn{y}{f(Z)v }. 
\end{equation}

For any $Z \in \B ^d_n$ one can also define a natural \emph{kernel map} $K_Z \in \L (\C ^{n \times n} , H^2 (\B ^d _\N) )$ as follows: Any $A \in \C ^{n \times n}$
can be written as a linear combination of the rank one outer products
$$ y   v^*  = \bbm y_1\\ \vdots \\ y_n  \ebm \bbm \overline{v_1}, \ & \cdots \ & , \overline{v_n} \ebm ; \quad \quad y \in \C ^n, v^* \in (\C^n)^*. $$
Then we define $K_Z$ on rank one matrices $yv^*$ by the formula
\begin{equation}\label{eqn:K-bigZ-def}
 K_Z ( yv ^*) := \kz \in H^2 (\B _\N ^d ). 
\end{equation}
 Let us check that $K_Z$ is well defined: the vectors $y$ and $v$ determining a rank one matrix $yv^*$ are unique up to the scaling $y\to \lambda y, v\to \overline{\lambda}^{-1}v$ where $\lambda$ is any nonzero complex number. From the reproducing formula (\ref{eqn:kz-reproducing-formula}), it is evident that the vector $K\{Z,y,v\}$ is invariant under such a scaling, and so the formula (\ref{eqn:K-bigZ-def}) is unambiguous.  If we view $\C ^{n \times n}$ as a Hilbert space equipped with the normalized trace inner product, then $K_Z : \C ^{n \times n } \rightarrow H^2 (\B ^d _\N )$ extends to a bounded linear map, and its Hilbert space adjoint is the point evaluation map at $Z$:
$$ K_Z ^* F = F(Z) \in \C ^{n \times n}. $$ 

The free Hardy space and the full Fock space are canonically isomorphic: Define $\scr{U} : F^2 _d \rightarrow  H^2 (\B ^d _\N ) $ by:
\begin{align*} x & :=  \sum _{\alpha \in \F ^d } x_\alpha L^\alpha 1 \stackrel{\scr{U}}{\mapsto} f_x \in H^2 (\B _\N ^d ), \\
 f_x (Z) &:=  \sum _{\alpha \in \F ^d} Z^\alpha x_\alpha; \quad \quad Z \in \B _\N ^d. \end{align*}
The inverse, $\scr{U} ^{-1}$, acts on kernel vectors as:
 \be \kz \stackrel{\scr{U} ^{-1} }{\mapsto} x[Z,y,v] := \sum _{\alpha \in \F ^d} \ip{Z^\alpha v}{y} L^\alpha 1 \in F ^2 _d. \label{pevalimg}\ee

\subsection{Left and Right free multipliers}
\label{leftrightmult}
As in the classical setting, given a NC-RKHS $\H _{nc} (K )$ on an NC set $\Om$ (\emph{e.g.} $\B ^d _\N$), it is natural to consider the left and right \emph{multiplier algebras} $$ \mr{Mult} ^L (\H _{nc} (K) ), \ \mr{Mult} ^R (\H _{nc} (K) )$$ of NC functions on $\Om$ which left or (\emph{resp.}) right multiply $\H _{nc} (K)$ into itself. 
Namely, a free function $F$ on $\B _\N ^d$ is said to be a \emph{left free multiplier} if, for any $f \in H^2 (\B _\N ^d )$, $Ff \in H^2 (\B ^d _\N)$. Similarly, $G$ is called a \emph{right free multiplier}
if $fG \in H^2 (\B _\N ^d )$ for all $f \in H^2 (\B _\N ^d )$. As in the classical setting, the left and right \emph{free multiplier algebras}, $H^\infty _L (\B _\N ^d ) := \mr{Mult} ^L (H^2 (\B ^d _\N ) ), H^\infty _R (\B _\N ^d )$ are weak operator toplogy (WOT)-closed unital operator algebras. Moreover, adjunction by the canonical unitary $\scr{U}$ defines a unitary $*-$isomorphism of the left and right free analytic Toeplitz algebras $L^\infty _d , R^\infty _d$ onto these
left and right free multiplier algebras of $H^2 (\B _\N ^d )$.  As in the classical setting of $H^\infty (\D)$, the multiplier norm of any $F \in H^\infty _L (\B _\N ^d )$ can be computed as the supremum norm on the NC unit ball:
$$ \| F \| := \sup _{Z \in \B _\N ^d} \| F (Z ) \|. $$ The left and right Schur classes, $\scr{L} _d , \scr{R} _d$ are then defined as the closed unit balls of these left and right multiplier algebras (equivalently as the closed unit balls of $L^\infty _d , R^\infty _d$). 

Observe that if $F$ is a left free multiplier then,
\ba \ip{\kz}{Ff} & = & \ip{y}{F(Z) f(Z) v} \nn \\
& = & \ip{K \zyv{Z,F(Z) ^* y, v}}{f}, \nn \ea so that
\be (M^L _F)^* \kz = K \zyv{Z,F(Z) ^* y,v}, \label{leftpevalue} \ee and similarly, if $G$ is a right free multiplier,
\be (M^R _G)^* \kz = K \zyv{Z ,y,G(Z) v}. \label{rightpevalue} \ee Alternatively, using the kernel maps $K_Z$, we can write:
$$ (M_F ^L) ^* K_Z (yv) = K_W (F(Z)^*yv^*), \quad \quad \mbox{and} \quad \quad (M_G^R)^* K_Z = K_Z  (y v^*G(Z)^*). $$
One can check that if, \emph{e.g.}, right multiplication by $G(Z)$ is a right free multiplier then
$$(   (M_G^R)^* K_Z)^* ((M_G^R)^* K_W ) = K(Z,W)[G(Z) \cdot G(W) ^*]. $$

In particular, free holomorphic $F(Z),G (Z)$ belong to the left or right Schur classes if and only if
$$ K^F (Z,W)[\cdot]:= K(Z,W) - F(Z) K(Z,W)[\cdot] F(W) ^*  $$ or
$$ K^G (Z,W) [\cdot]:= K(Z,W) - K(Z,W)[G(Z) [\cdot] G(W) ^* ]  $$ are CPNC kernels, respectively. These NC kernels are called the left or right free deBranges-Rovnyak kernels of $F, G$ (\emph{resp.}) and in this case the corresponding NC-RKHS $\mc{H} _{nc} (K^F) =: \scr{H} ^L (F)$, $\mc{H} _{nc} (K^G) =: \scr{H} ^R (G)$ are the left and right free deBranges-Rovnyak spaces of $F,G$.

\subsection{Left vs. Right}

Any element $F \in L^\infty _d$ can be identified with the left free Fourier series:
$$ F \sim F(L) := \sum _{\alpha \in \F ^d}  F_\alpha L^\alpha ; \quad \quad F_\alpha := \ip{L^\alpha 1}{F 1}. $$
That is, $F$ is identified with its \emph{symbol}:
$$ f:= F1 = \sum _{\alpha \in \F ^d}  F_\alpha L^\alpha 1 \in F^2 _d, $$
and we say that $F(L) = M^L _{f}$ acts as left multiplication by $f=F1$. In general the free Fourier series does not converge in SOT or WOT, but the Ces\`aro
sums converge in the strong operator toplogy (SOT) to $F$ \cite{DPac}.

Similarly, in the operator valued setting, any $F \in L^\infty _d \otimes \L (\H , \J)$ is written $F=F(L)= M^L _{f}$, where the symbol, $f \in F^2 _d \otimes \L (\H, \J)$ is defined by 
$$ f:= F (1 \otimes I_\H) = \sum _\alpha L^\alpha 1 \otimes F_\alpha ; \quad \quad F_\alpha \in \L (\H, \J). $$ In this case the operator-valued free holomorphic function $F(Z)$ takes values in $(\C ) _{nc} \otimes \L (\H , \J)$. 

We can also identify any $G \in R^\infty _d$ with its symbol:
$$ g := G1 = \sum _{\alpha \in \F ^d} G_\alpha L^\alpha 1, $$ then we can view $G$ as right multiplication by $g(Z)$,
$$ G = M^R _{g(Z)}. $$ Alternatively, we can write
$$ g = \sum _{\alpha \in \F ^d} G_{\alpha ^\dag} R^\alpha 1, $$ so that
$$ G = M^R _{g(Z)} = g^\dag (R), \quad  \mbox{where} \quad g^\dag (Z) := \sum _{\alpha \in \F ^d} G_{\alpha ^\dag} Z^\alpha. $$
That is, if $G \in R^\infty _d$ acts as right multiplication by the free NC holomorphic function $G (Z)$, then
$ M^R _{G(Z)} $ is identified with the right free Fourier series $G^\dag (R)$ (whose Ces\`aro sums converge SOT to $G$).

\begin{remark} \label{rightproduct} In the right operator-valued setting, suppose that $G (R) := g  (R) \otimes X \in R^\infty _d \otimes \L (\H , \J)$ and $F :=f  (R) \otimes Y \in R^\infty _d \otimes \L (\J , \K )$ with $f,g \in R^\infty _d$, $X \in \L (\H, \J)$, and $Y \in \L (\J , \K)$. If $H = FG$, then observe that 
$$ H ^\dag (Z) = g ^\dag (Z) f ^\dag (Z) \otimes YX. $$ This extends to a `right product' for arbitrary operator-valued free holomorphic functions on $\B ^d _\N$,  $H (Z) = F(Z) \bullet _R G(Z)$. In the scalar-valued setting this simply reduces to $F(Z) \bullet _R G(Z) = G(Z) F(Z)$. 
\end{remark}

\subsection{Operator-valued free multipliers}
\label{freedBRspace}

It will also be convenient to consider operator-valued (left and right) free multipliers between vector-valued free Hardy spaces. Namely, if $\H$ is an auxiliary Hilbert space, one can consider the NC-RKHS $H^2 (\B ^d _N ) \otimes \H$ of $\H$-valued NC functions on $\B ^d _\N$. This NC-RKHS has the operator-valued CPNC kernel:
$$ K(Z,W) \otimes I_\H, $$ and is spanned by the elements 
$$ \kz h := \kz \otimes h, \quad \quad h \in \H,$$ with inner product defined by 
$$ \ip{\kz h}{\kw g}:= \ipcn{y}{K(Z,W)[vu^*]x} \cdot \ip{h}{g}_\H, $$ for $h,g \in \H$, $Z \in \B ^d _n, W \in \B ^d _m$, $v , y \in \C ^n$ and $u, x \in \C ^m$. 
We will write $H^\infty _L (\B ^d _\N) \otimes \L (\H , \J)$ in place of $\mr{Mult} ^L (H^2 (\B ^d _\N ) \otimes \H , H^2 (\B ^d _\N ) \otimes \J )$, the $WOT-$closed \emph{left multiplier space} between these vector-valued free Hardy spaces. (That is, we write $H^\infty _L (\B ^d _\N) \otimes \L (\H , \J)$ in place of the weak operator topology closure of this algebraic tensor product). The operator-valued Schur classes, $\scr{L} _d (\H , \J), \scr{R} _d (\H ,\J)$ are then the closed unit balls of these operator-valued left and right (\emph{resp}.) multiplier spaces.

Given any bounded linear operator $T \in \L (\H , \J )$, the operator range space, $\scr{M} (T)$, is defined as the Hilbert space completion of $\ran{T}$ equipped with the inner product:
$$ \ip{Th}{Tg}_{\scr{M} (T)} := \ip{P_{\ker{T} ^\perp} h}{g} _\H. $$ Namely, this inner product is defined so that $T$ becomes a co-isometry onto its range. If $T$ is a contraction so that $I_\J-TT^*$ is positive semi-definite, one can also define the \emph{complementary space} $\scr{H} (T) := \scr{M} (\sqrt{I _\J -TT^*})$. Notice that (assuming $T$ is a contraction) $\scr{M} (T), \scr{H} (T)$ are contractively contained in $\J$. That is, for example, the embedding map $E : \scr{H} (T) \rightarrow \J$ is a contraction. In the context of Hardy space theory, if $b \in \scr{S} = [H^\infty (\D )]_1$ is a Schur class function on the disk, the complementary space $\scr{H} (b) := \scr{H} (M_b)$ is called the \emph{deBranges-Rovnyak space} of $b$. Analogously one can define vector-valued \emph{deBranges-Rovnyak spaces} for any operator-valued multi-variable (abelian or non-commutative) Schur class function for Drury-Arveson space or the free Hardy space. See \cite{FM2,Sarason-dB} for a general introduction to operator range spaces and classical deBranges-Rovnyak spaces.

If $A \in \scr{L} _d (\H , \J)$ or $B \in \scr{R} _d (\H , \J)$, consider the operator range spaces and complementary spaces $\scr{M} ^L (A) := \scr{M} (M^L _A)$, $\scr{H} ^L (A) := \scr{H} (M^L _A)$, and $\scr{M} ^R (B) := \scr{M} (M^R _B )$ and $\scr{H} ^R (B):= \scr{H} (M^R _B)$. The complementary spaces $\scr{H} ^L (A)$ and $\scr{H} ^L (B)$ are called the free left and right deBranges-Rovnyak spaces of the left and right operator-valued Schur class multipliers $A$ and $B$, respectively. All four of these operator range spaces are Hilbert spaces contractively contained in vector-valued free Hardy spaces, and are hence themselves vector-valued NC-RKHS.  For example, consider $\scr{M} ^L (A ) ,\scr{H} ^L (A)$ for $A \in \scr{L} _d (\H , \J)$. Similar to the classical setting one can show that these spaces have operator-valued CPNC kernels:
\ba K ^{\scr{M} ^L (A)} (Z,W) [P] & = & A(Z) \left(K(Z,W) [P ] \otimes I_\H \right) A(W) ^* \in \L (\J); \nn \\ & &  Z \in \B ^d _n, W \in B^d _m, \ P \in \C ^{n\times m}, \nn \ea
and 
$$ K ^{\scr{H} ^L (A)} (Z, W) = K(Z,W) \otimes I _\J - A(Z) (K(Z,W) \otimes I_\H) A(W) ^*. $$ Here, $K$ is the free Szeg\"{o} kernel.
Namely, for example, setting $K^A := K^{\scr{H} ^L (A)}$, $\scr{H} ^L (A)$ is the closed linear span of vectors of the form
$K^A \zyv{Z, y, v} g$ whose inner product is defined by:
\ba && \ip{K^A \zyv{Z,y,v} g }{K^A \zyv{W,x,u} f} _{\scr{H} ^L (A)}  \nn \\
&:= & \ip{y \otimes g}{\left(K(Z,W)[v u^*] \otimes I_\J - A(Z) (K(Z,W) [v u^*] \otimes I_\H ) A(W)^* \right) x \otimes f }_{\C ^n \otimes \J}. \nn \ea
In this vector-valued setting, for $Z \in \B ^d _n$ we write $K^A _Z : \C ^{n\times n} \otimes \J \rightarrow \scr{H} ^L (A)$ for the kernel map $K^A _Z (yv^* \otimes g) = K^A \zyv{Z,y,v} g$. 

On the other hand, if $B=B(R) \in \scr{R} _d (\H, \J)$, then $\scr{H} ^R (B)$ is spanned by the vectors 
$K^B \{ Z , v , y \} g$ with inner product: 
\ba & &  \ip{K^B \zyv{Z,y,v} g }{K^B \zyv{W,x,u} f} _{\scr{H} ^R (B)} \nn \\
&:=& \ip{y \otimes g}{K^B (Z,W) [v u^* \otimes I_\H ]   x \otimes f }_{\C ^n \otimes \J} \nn \\
K^B (Z,W)[\cdot ] & = &K(Z,W)[\cdot] \otimes I _\J  
- (K(Z,W) \otimes \mr{id} _\J) [B^\dag (Z) ( [\cdot ] \otimes I_\H ) B^\dag (W)^* ]. \nn \ea 
It is not difficult to see that free operator-valued holomorphic functions $A,B$ on $\B ^d _\N$ belong to the left or right free Schur classes if and only if the above NC deBranges-Rovnyak kernels are (completely) positive. 

\begin{remark}{ (Right Product)}
    If $F \in \mr{Mult} ^R (K_1 ,K_2) \otimes \L (\H , \J)$ is a right operator-valued multiplier between vector-valued NC-RKHS on (say) the open unit NC ball $\B ^d _\N$, then one can easily verify that for any $g \in \J$,
    $$ (M^R _F) ^* K_2 \{ Z,y,v \} g= K_1 \{ Z , y , F(Z) \bullet _R v \} g, $$ where for any $f \in \H _{nc} (K_1)$ we define:
$$ \ip{K_1 \{ Z , y , F(Z) \bullet _R v \} g}{f} =  \ip{y \otimes h}{ F(Z) \bullet _R f(Z) v}. $$
Also, in the above, any element $f$ of a $\H-$valued NC-RKHS $\H _{nc} (K)$ is such that $f(Z) \in \C ^{n\times n} \otimes \H$, so that $f(Z) v$ is to be interpreted as an element of $\C ^n \otimes \H$. 
\end{remark}

Finally, recall that any $B \in \scr{R} _d (\H , \J)$ (or in the left free Schur class) is called \emph{inner} if $B (R)  = M^R _{B^\dag}$ is an isometry. If $B$ is inner then $\scr{M} ^R (B)$ is isometrically contained in $F^2 _d \otimes \J$ and $\scr{H} ^R (B)$ is the orthogonal complement of the range of $M^R _B$ in $F^2 _d$. An operator-valued right multiplier $A \in R^\infty _d \otimes \L (\H , \J)$ is \emph{outer} if $M^R _{A^\dag (Z)}$ has dense range (equivalently if $ M^R _{A^\dag} (1 \otimes \H )$ is cyclic for the vector-valued left free shift $L \otimes I_\J$).
We will use the notation $\scr{O} ^\infty _R (\B ^d _\N)$ for the outer scalar-valued right free multipliers. 

\begin{remark}
    All results in this paper have a `left' and a `right' version, and the proofs are typically invariant under the interchange of left and right.  
\end{remark}

\section{The free Smirnov classes}

Given linear transformations $T_k : \dom{T_k} \subseteq \H \rightarrow \J$, we will use the subset notation: $T_1 \subseteq T_2$ to denote that $\dom{T_1} \subseteq \dom{T_2}$ and $T_2 | _{\dom{T_1}} = T_1$. In this case we say that $T_2$ is an extension of $T_1$ and $T_1$ is a restriction of $T_2$.

\begin{defn}
A linear transformation $T : \dom{T} \subseteq F^2 _d \rightarrow F^2 _d$ is \emph{affiliated to the right free shift} if
$$ T L \subseteq L (T \otimes I_d). $$ That is, for any $1\leq k \leq d$, $L_k \dom{T} \subseteq \dom{T}$ and $TL_k x = L_k Tx$ for any $x \in \dom{T}$. If $T$ is also closed and densely-defined, we write $T \sim R^\infty _d$.

If $T$ is affiliated to the right free shift, closed, and $1 \notin \dom{T} ^\perp$, we will write $T \lesssim R^\infty _d$.
\end{defn}

\begin{lemma} \label{freeouter1}
If $B \in R^\infty _d$ or $L^\infty _d$ is outer, then $B(Z)$ is invertible for all $Z \in \B ^d _\N$.
\end{lemma}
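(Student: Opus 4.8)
The plan is to prove the contrapositive in the form of a concrete obstruction: if $B(Z_0)$ fails to be invertible at some point $Z_0 \in \B^d_n$, then $B$ cannot be outer, i.e. $M^R_{B^\dag}$ (in the right case) fails to have dense range. I will work with the right case; the left case is identical after interchanging left and right via the transpose unitary $U_\dag$ and Remark on the left/right symmetry.

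First I would fix $Z_0 \in \B^d_n$ and suppose $B(Z_0) \in \C^{n\times n}$ is \emph{not} invertible. Since $B(Z_0)$ is a square matrix, non-invertibility means there is a nonzero covector witnessing the deficiency: concretely, $B(Z_0)^*$ has nontrivial kernel, so I can choose $0 \neq y \in \C^n$ with $B(Z_0)^* y = 0$. The key idea is to feed this into the adjoint action of $M^R_B$ on point-evaluation vectors. Recall from (\ref{rightpevalue}) that
$$ (M^R_B)^* K\zyv{Z_0, y, v} = K\zyv{Z_0, y, B(Z_0)v} $$
for every $v \in \C^n$; more useful here is the companion formula obtained by putting the multiplier value into the \emph{first} slot. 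I would instead use the relation that pairs $M^R_B$ against the $y$-argument: from the reproducing formula (\ref{eqn:kz-reproducing-formula}) and the definition of a right multiplier, for any $f \in H^2(\B^d_\N)$ one has
$$ \ip{K\zyv{Z_0, y, v}}{fB} = \ipcn{y}{f(Z_0)B(Z_0)v}. $$
The plan is to exhibit a \emph{nonzero} vector orthogonal to the range of $M^R_B$, which contradicts density of the range (outerness).

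The heart of the argument is therefore to produce a nonzero vector $\eta \in H^2(\B^d_\N)$ annihilated by $(M^R_B)^*$ adjointed appropriately, built from the kernel vectors at $Z_0$ and the deficient direction $y$. Using the identity $(M^R_B)^* K\zyv{Z_0,y,v} = K\zyv{Z_0,y,B(Z_0)v}$, I would consider the finite-dimensional subspace $\mc{K}_{Z_0} := \{K\zyv{Z_0,y,v} \mid v \in \C^n\} = \ran{K_{Z_0}(y\,\cdot^*)}$ and observe that $(M^R_B)^*$ maps it into itself, acting there like the matrix $B(Z_0)$ acting on the $v$-slot. Since $B(Z_0)$ is not invertible, this induced finite-rank operator on $\mc{K}_{Z_0}$ is not surjective, so there is a nonzero $K\zyv{Z_0,y,v_0}$ not in the range of $(M^R_B)^*$ restricted to $\mc{K}_{Z_0}$; transporting this through the kernel identity $K(Z,W)[B(Z)\,\cdot\,B(W)^*]$ computed via the displayed formula $((M^R_B)^*K_Z)^*((M^R_B)^*K_W) = K(Z,W)[B(Z)\cdot B(W)^*]$, I would show that the corresponding vector lies in $\ran{M^R_B}^\perp = \ker{(M^R_B)^*}$ precisely because of the kernel-direction deficiency $B(Z_0)^*y = 0$. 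This contradicts $\ov{\ran{M^R_B}} = F^2_d$, i.e. contradicts outerness of $B$.

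The step I expect to be the main obstacle is the bookkeeping that correctly matches which slot ($y$ versus $v$) the matrix $B(Z_0)$ (or its adjoint) acts on, and confirming that the finite-dimensional non-surjectivity at the single point $Z_0$ genuinely produces a \emph{global} orthogonality to the whole range $\ran{M^R_B}$ rather than merely a local one. Making this rigorous means showing that the kernel vector $K\zyv{Z_0,y,\cdot}$ associated to a left-null direction $y$ of $B(Z_0)^*$ is orthogonal to $fB$ for \emph{all} $f$, which follows cleanly from $\ip{K\zyv{Z_0,y,v}}{fB} = \ipcn{y}{f(Z_0)B(Z_0)v}$ once one checks this vanishes for a suitable choice; the delicate point is selecting $v$ (or a combination) so the pairing vanishes identically in $f$ while the vector itself stays nonzero, which is exactly where non-invertibility of the square matrix $B(Z_0)$ — equivalently $\ker{B(Z_0)^*} \neq \{0\}$ — is used.
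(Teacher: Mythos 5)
Your overall strategy is the same as the paper's: use a null direction of the non-invertible matrix $B(Z_0)$ to produce a nonzero point-evaluation vector orthogonal to the range of the multiplier, contradicting outerness. However, the "delicate point" you flag at the end is a genuine gap, and the specific choice you made does not resolve it: you chose a \emph{left} null vector $y$ (with $B(Z_0)^*y=0$) while working the \emph{right} case, and these do not match. For a right multiplier the pairing you wrote is
$$ \ip{K\zyv{Z_0,y,v}}{fB} = \ipcn{y}{f(Z_0)B(Z_0)v} = y^* f(Z_0) B(Z_0) v, $$
so the matrix $f(Z_0)$ sits \emph{between} $y^*$ and $B(Z_0)$; the hypothesis $y^*B(Z_0)=0$ gives no control here, and already for $f$ ranging over monomials the required conditions become $y^* Z_0^\alpha B(Z_0) v = 0$ for all $\alpha$, which need not hold. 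Relatedly, there is no "companion formula" putting the multiplier value into the first slot for right multipliers: in (\ref{rightpevalue}) the value $B(Z_0)$ lands in the $v$-slot, and the $y$-slot formula (\ref{leftpevalue}) is the \emph{left}-multiplier identity --- indeed $y\in\ker{B(Z_0)^*}$ is exactly the paper's choice, but for the left case, which is the case the paper writes out. The correct choice for the right case is a \emph{right} null vector: since $B(Z_0)$ is a square matrix, non-invertibility gives $0\neq v\in\ker{B(Z_0)}$, and then $\ip{K\zyv{Z_0,y,v}}{fB} = y^*f(Z_0)B(Z_0)v = 0$ for \emph{every} $f$ at once, so $K\zyv{Z_0,y,v}\perp\ran{M^R_B}$; equivalently, by (\ref{rightpevalue}), $(M^R_B)^*K\zyv{Z_0,y,v}=K\zyv{Z_0,y,B(Z_0)v}=0$.

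There is a second unaddressed point: you still must check that the orthogonal vector is nonzero, and this can genuinely fail, since
$$ \| K\zyv{Z_0,y,v} \|^2 = \ipcn{y}{K(Z_0,Z_0)[vv^*]y} = \sum_{\alpha\in\F^d} \left| \ipcn{y}{Z_0^\alpha v} \right|^2 $$
vanishes, for example, when $Z_0=0$ and $y\perp v$ even though $y,v\neq 0$. Taking $y=v$ (or any $y$ with $\ipcn{y}{v}\neq 0$) repairs this, since the $\alpha=\emptyset$ term is then $\|v\|^4>0$. Finally, your middle paragraph --- the induced finite-rank operator on $\mc{K}_{Z_0}$, its non-surjectivity, and "transporting" through the kernel identity --- is both unnecessary and not a proof as stated: failure of surjectivity of the compression of $(M^R_B)^*$ to $\mc{K}_{Z_0}$ does not by itself produce a global element of $\ker{(M^R_B)^*}$. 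With the two corrections above your argument collapses to the paper's short proof (of which the right case is the mirror image).
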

\begin{proof}
By definition, any $B \in L^\infty _d$ or $R^\infty _d$ is outer if it has dense range and this is equivalent to $B(L) ^*$ or $B(R) ^*$ (\emph{resp.}) being injective. If $B\in L^\infty _d$, and there is a $Z \in \B _n ^d$ so that $B(Z) \in \C ^{n\times n}$ is not invertible, then we can find a $y \in \C ^n$ so that $B(Z) ^* y =0$. In this case $B ^* \kz =0$ for any non-zero $v ^* \in \C ^n$, so that $B(L) ^*$ is not injective and $B$ is not outer.
\end{proof}

\begin{lemma} \label{freeouter}
If $B \in \scr{R} _d$ or $\scr{L} _d$ and $B(Z)$ is strictly contractive on $\B ^d _\N$ then $1 -B$ is outer.
\end{lemma}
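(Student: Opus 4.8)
The plan is to characterize outerness through injectivity of an adjoint and then run a short contraction fixed-point argument. I treat the left case $B \in \scr{L} _d$ in detail; by the symmetry remark the right case is identical after interchanging left and right throughout. Since $B \in \scr{L} _d \subseteq L^\infty _d$ and $1 \in L^\infty _d$, the difference $1-B$ lies in $L^\infty _d$, and by the definition of outer (as used in the proof of Lemma \ref{freeouter1}) $1-B$ is outer precisely when $M^L _{1-B}$ has dense range, equivalently when $\ker{(M^L_{1-B})^*} = \{ 0 \}$. So I would fix $f \in \ker{(M^L_{1-B})^*}$ and aim to show $f = 0$.

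Because $M^L _{1-B} = I - M^L _B$, the equation $(M^L _{1-B})^* f = 0$ is exactly $(M^L _B)^* f = f$; that is, $f$ is a fixed vector of $C := (M^L _B)^*$. As $B$ lies in the left Schur class, $\| M^L _B \| \le 1$, so $C$ is a contraction. I would then invoke the elementary fact that a fixed vector of a contraction is also fixed by its adjoint: if $Cf = f$ then, using $\ip{C^* f}{f} = \ip{f}{Cf} = \| f \|^2$ (which is real), one gets $\| C^* f - f \|^2 = \| C^* f \|^2 - 2\| f \|^2 + \| f \|^2 \le 0$, so $C^* f = f$. Applied to $C = (M^L _B)^*$ this gives $M^L _B f = f$.

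It remains to pass to the level of free functions. The identity $M^L _B f = f$ means $B(Z) f(Z) = f(Z)$, i.e. $(I - B(Z)) f(Z) = 0$, for every $Z \in \B ^d _\N$. Here the strict contractivity hypothesis is used: for each $Z$ the (finite) matrix $B(Z)$ has $\| B(Z) \| < 1$, so its spectral radius is less than $1$, hence $1$ is not an eigenvalue and $I - B(Z)$ is invertible. Therefore $f(Z) = 0$ for all $Z$, so $f$ is the zero free function. This proves $\ker{(M^L_{1-B})^*} = \{ 0 \}$, and hence that $1-B$ is outer.

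The computation is short, so the care goes into the conceptual points rather than any grinding. The main subtlety I anticipate is that strict contractivity is a \emph{pointwise} hypothesis, $\| B(Z) \| < 1$ for each $Z$, and is strictly weaker than asking the multiplier norm $\| M^L _B \| = \sup _{Z} \| B(Z) \|$ to be $< 1$, which could still equal $1$. For that reason one cannot conclude $f = 0$ directly from $(M^L _B)^* f = f$ by a norm estimate; the contraction fixed-point fact is precisely the device that upgrades the abstract adjoint equation to the pointwise relation $(I - B(Z)) f(Z) = 0$, at which point pointwise strict contractivity finishes the proof.
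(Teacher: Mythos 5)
Your proof is correct and follows essentially the same route as the paper's: both reduce outerness to injectivity of $(I - B(L))^*$, use the standard fact that a fixed vector of a contraction's adjoint is fixed by the contraction itself, pass to the pointwise identity $(I - B(Z))f(Z) = 0$, and conclude $f \equiv 0$ from the pointwise invertibility of $I - B(Z)$ guaranteed by strict contractivity. The only difference is cosmetic: you spell out the contraction fixed-point computation and the spectral-radius justification, which the paper leaves implicit.
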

This is a free analogue of \cite[Lemma 2.3]{Smirnov}.
\begin{proof}
Assume that $B \in \scr{L} _d$ is strictly contractive on the NC unit ball. It follows that $(1-B) (Z)$ is invertible on $\B _\N ^d$. Suppose that there is a $f \in H^2 (\B ^d _\N )$ so that $(I -B(L) ^* ) f = 0$. Since $B(L)$ is a contraction, it follows that
$B(L) f = f$, so that $(I - B(L) ) f =0$.  It then follows that
$$ (I -B(Z)) f(Z) = 0 ; \quad \quad Z \in \B ^d _\N, $$ and since $1-B$ is invertible on the NC unit ball, $f \equiv 0$. This proves that $(I-B(L) ^*)$ is injective so that $I-B$ is left outer.
The right version of the proof is analogous.
\end{proof} 
\begin{remark} \label{strict}
Note that if $B _\emptyset = B(0) =0$ and $B$ is Schur, then $\| B(Z) \| \leq \| Z \|$ for any $Z$ in the NC open unit ball by the free Schwarz lemma \cite[Theorem 2.4]{Pop-freeholo}. Combining this fact with automorphisms of unit ball of $\C ^{n \times n}$, \emph{i.e.} operator-M\"{o}bius transformations \cite{MMdBR}, shows that if $|B _\emptyset | <1$ then $B(Z)$ is strictly contractive on $\B _\N ^d$.
\end{remark}
\begin{defn}\label{def:free-smirnov}
The \emph{left and right free Smirnov classes} are the sets of all free (NC) functions on the open NC unit ball, $\B _\N ^d$, which can be written as ratios of $H^\infty _{L} (\B _\N ^d) \simeq L^\infty _d$ or $H^\infty _{R} (\B _\N ^d)$ functions with (respectively) left or right outer denominator:
\ba & & \scr{N} _d ^+ (R) := \left\{ \left. A ^\dag (Z) ^{-1} B ^\dag (Z) \right| \ B \in H^\infty _R (\B _\N ^d), \ A \in \scr{O} ^\infty _R (\B ^d _\N )  \right\}, \nn \\
 &  & \scr{N} _d ^+ (L) := \left\{ \left. B(Z) A(Z) ^{-1}  \right| \ B \in H^\infty _L (\B _\N ^d ), \  A \in \scr{O} ^\infty _L (\B ^d _\N ) \right\}. \nn \ea
\end{defn}

(The previous lemma, Lemma \ref{freeouter1}, implies that any outer $A \in R^\infty  _d$ or $L ^\infty _d$ is invertible on the NC unit ball.) Our goal is to identify the set of all $T \sim R^\infty _d$ as unbounded right multipliers with symbols in the Smirnov class $\scr{N} _d ^+ (R)$. Although our main interest is in densely-defined operators, we will also obtain several results on $T \lesssim R^\infty _d$.
\begin{remark}
One could also define the \emph{right free Nevanlinna class}, $\scr{N} _d (R)$, as the set of all free functions on the NC unit ball which can be written as `ratios' of $H^\infty _R (\B ^d _\N)$ functions with $\ran{B^\dag (Z)} \subseteq \ran{A^\dag (Z)}$ for all $Z$ in a matrix-norm dense set in $\B ^d _\N$ (on each level). While we will not pursue this, we expect there are analogous results relating $\scr{N} _d (R)$ and $T \lesssim R^\infty _d$. 
\end{remark}
\begin{defn}
    We say that a closed $T:\dom{T} \subseteq F^2 _d \rightarrow F^2 _d$ affiliated to $R^\infty _d$ is \emph{local} if
    $$ L^* \left( \ran{L} \cap \dom{T} \right) \subseteq \dom{T} \otimes \C ^d. $$ That is, $T$ is local if, whenever $x \in \dom{T}$ and $x = L L^* x$,
then $L_k ^* x \in \dom{T} \ \forall 1 \leq k \leq d$.
\end{defn}
\begin{lemma}{ (\cite[Lemma 2.4]{JM-H2Smirnov})} \label{BabySmirnov}
Let $T(Z)$ be a free holomorphic function on $\B _\N ^d$. Define the domain
$$ \dom{T} := \{ f \in H^2 (\B _\N ^d ) | \ f(Z) T(Z) \in H^2 (\B _\N ^d ) \}. $$ Then the linear transformation of right multiplication by $T(Z)$, $T := M^R _{T(Z)}$, with domain $\dom{T}$ is closed and affiliated to the right free shift. This $T$ is also local.
\end{lemma}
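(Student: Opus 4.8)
The plan is to reduce all three assertions to pointwise (matricial) statements about the free function $f(Z)T(Z)$, using that right multiplication by $T(Z)$ acts on $\dom{T}$ as $f \mapsto fT$ with $(fT)(Z) = f(Z)T(Z)$, the ordinary matrix product in $\C^{n\times n}$ at each level $n$. The single most useful tool throughout is that point evaluation is bounded: by the reproducing formula \eqref{eqn:kz-reproducing-formula}, for fixed $Z \in \B^d_n$ and $y,v \in \C^n$ the functional $h \mapsto \ipcn{y}{h(Z)v}$ is an inner product against a fixed kernel vector, so $H^2$-convergence forces entrywise convergence of $h(Z)$ at every matrix level.

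For affiliation I would first note that left multiplication by the $k$-th coordinate acts pointwise as $(L_k f)(Z) = Z_k f(Z)$, so by associativity of matrix multiplication $(L_k f)(Z)T(Z) = Z_k\bigl(f(Z)T(Z)\bigr) = (L_k(fT))(Z)$. If $f \in \dom{T}$ then $fT \in H^2$, and since $L_k$ is a bounded left multiplier, $L_k(fT) \in H^2$; hence $(L_k f)T \in H^2$, i.e. $L_k f \in \dom{T}$, and the same identity gives $TL_k f = L_k Tf$. This is exactly the condition $TL \subseteq L(T \otimes I_d)$, so $T$ is affiliated to the right free shift.

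For closedness, suppose $f_n \in \dom{T}$ with $f_n \to f$ and $Tf_n \to g$ in $H^2$. By boundedness of point evaluation, $f_n(Z) \to f(Z)$ entrywise for each $Z$, and continuity of matrix multiplication by the fixed matrix $T(Z)$ gives $f_n(Z)T(Z) \to f(Z)T(Z)$; on the other hand $(Tf_n)(Z) = f_n(Z)T(Z) \to g(Z)$. Equating the limits yields $f(Z)T(Z) = g(Z)$ at every $Z$, and since $g \in H^2$ we conclude $fT = g \in H^2$, so $f \in \dom{T}$ and $Tf = g$.

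The delicate part is locality. Here the hypotheses are $f \in \dom{T}$ and $f = LL^* f = \sum_k L_k L_k^* f$; equivalently the vacuum coefficient $f_\emptyset = f(0)$ vanishes. Writing $h_k := L_k^* f \in H^2$, this gives the reconstruction $f(Z) = \sum_k Z_k h_k(Z)$, hence $g(Z) := f(Z)T(Z) = \sum_k Z_k\bigl(h_k(Z)T(Z)\bigr)$ with $g \in H^2$. The goal is to show each $h_k T \in H^2$, i.e. $h_k = L_k^* f \in \dom{T}$. Since $g_\emptyset = f(0)T(0) = 0$, the free Fourier expansion of $g$ yields the analogous reconstruction $g(Z) = \sum_k Z_k (L_k^* g)(Z)$ with each $L_k^* g \in H^2$. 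The crux is a uniqueness statement: if free functions $q_k$ satisfy $\sum_k Z_k q_k(Z) \equiv 0$, then every $q_k \equiv 0$, which holds because the words $k\beta$ are distinct for distinct pairs $(k,\beta) \in \{1,\dots,d\} \times \F^d$, so the Taylor--Taylor coefficients of the $q_k$ are forced to vanish separately. Applying this to $q_k = h_k T - L_k^* g$ identifies $h_k T = L_k^* g \in H^2$, whence $L_k^* f \in \dom{T}$. I expect this coefficient-matching step---and the care needed to run it for the merely holomorphic free functions $h_k T$ that are not a priori in $H^2$---to be the main obstacle.
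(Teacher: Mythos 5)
Your proof is correct. There is nothing internal to compare it against: the paper states this lemma with a citation to \cite[Lemma 2.4]{JM-H2Smirnov} and supplies no proof of its own, so your argument stands on its own (and it is the natural one, consistent with how the lemma is used later). All three steps are sound: closedness follows from continuity of point evaluations via the reproducing formula \eqref{eqn:kz-reproducing-formula}; affiliation is the pointwise identity $(L_k f)(Z)T(Z) = Z_k\bigl(f(Z)T(Z)\bigr)$ plus boundedness of $L_k$; and locality correctly reduces to the uniqueness claim that $\sum_k Z_k q_k(Z) \equiv 0$ forces each $q_k \equiv 0$. The one point worth making explicit in that last step is the identity principle you invoke implicitly: for a free holomorphic function on $\B^d_\N$, vanishing of all Taylor--Taylor coefficients at $0$ implies the function vanishes identically (each restriction to the connected level $\B^d_n$ is analytic, with homogeneous parts of its Taylor expansion at $0$ given by the TT coefficients), and the TT coefficients of $\sum_k Z_k q_k(Z)$ are precisely the coefficients $(q_k)_\beta$ reindexed by the words $k\beta$; this applies to $q_k = h_k T - L_k^* g$ because products of free holomorphic functions are free holomorphic, even though $h_k T$ is not a priori in $H^2(\B^d_\N)$. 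Finally, your argument never uses dense-definedness, which matches the paper's remark immediately after the lemma that this hypothesis from the original citation is not needed.
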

The original lemma statement above from \cite{JM-H2Smirnov} assumed that $T$ was densely-defined, but this assumption is not needed.
\begin{cor} 
    If $T(Z) := A(Z) ^{-1} B(Z) \in \scr{N} _d ^+ (R)$ is a right-free Smirnov function then $T:= \ran{M^R _{T(Z)}}$, defined on its maximal domain, is densely-defined, closed, and local.
\end{cor}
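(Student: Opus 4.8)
The plan is to obtain closedness, affiliation, and locality for free from Lemma~\ref{BabySmirnov}, leaving only dense definedness to be established by hand; for that I would exhibit a concrete dense subspace of the maximal domain built from the outer denominator.

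First I would verify that $T(Z) = A(Z)^{-1}B(Z)$ is genuinely a free holomorphic function on all of $\B^d_\N$. Since $A$ is an outer right multiplier, Lemma~\ref{freeouter1} guarantees that $A(Z)$ is invertible at every point $Z \in \B^d_\N$; the everywhere-defined inverse $A(Z)^{-1}$ is then itself free holomorphic (as is already implicit in Definition~\ref{def:free-smirnov}), and a product of free holomorphic functions is free holomorphic, so $T(Z)$ is free holomorphic on $\B^d_\N$. Lemma~\ref{BabySmirnov} now applies verbatim and shows that $T = M^R_{T(Z)}$, taken on its maximal domain $\dom{T} = \{ f \in H^2(\B^d_\N) \mid f(Z)T(Z) \in H^2(\B^d_\N) \}$, is closed, affiliated to the right free shift, and local. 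Hence the only new content in the corollary is that $\dom{T}$ is dense.

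For density I would prove the inclusion $\ran{M^R_A} \subseteq \dom{T}$. Fix $h \in H^2(\B^d_\N)$ and set $f := M^R_A h$. The adjoint formula \eqref{rightpevalue} for right multipliers shows that right multiplication by $A$ acts pointwise as $(M^R_A h)(Z) = h(Z)A(Z)$, whence
$$ f(Z)\,T(Z) = h(Z)\,A(Z)\,A(Z)^{-1}\,B(Z) = h(Z)\,B(Z) = (M^R_B h)(Z). $$
Because $B \in H^\infty_R(\B^d_\N)$ is a bounded right multiplier, $M^R_B h \in H^2(\B^d_\N)$, so the free function $f(Z)T(Z)$ coincides with the honest $H^2$-vector $M^R_B h$; therefore $f \in \dom{T}$. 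Finally, $A$ outer means precisely that $M^R_A$ has dense range, so $\dom{T} \supseteq \ran{M^R_A}$ is dense in $H^2(\B^d_\N)$, completing the argument.

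The one place that requires care is the displayed cancellation: I would need to check, against the left/right conventions of Section~\ref{leftrightmult} and the $\dag$-bookkeeping of Definition~\ref{def:free-smirnov}, that the two factors in $T(Z) = A(Z)^{-1}B(Z)$ are multiplied in the order that makes $A(Z)A(Z)^{-1}$ genuinely cancel after right multiplication by $A$, and that the resulting free function equals $M^R_B h$ as an element of $H^2(\B^d_\N)$ (not merely on a subset of $\B^d_\N$). Once the ordering is fixed this is routine, so the essential difficulty is notational rather than analytic; all of the real work is supplied by Lemma~\ref{BabySmirnov} together with the outerness of $A$.
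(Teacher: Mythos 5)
Your proof is correct and takes essentially the same approach as the paper: closedness, affiliation, and locality are drawn from Lemma \ref{BabySmirnov}, and density follows because $\dom{T}$ contains the range of $M^R_{A(Z)} = A^\dag(R)$, which is dense by the assumed outerness of the denominator. The paper's own proof is exactly this one-sentence observation; your pointwise cancellation $h(Z)A(Z)A(Z)^{-1}B(Z) = h(Z)B(Z)$ just spells out the routine detail it leaves implicit.
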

\begin{proof}
    The domain of $T$ includes the range of $M^R _{A(Z)} = A^\dag (R)$, which is dense since $A^\dag (R)$ is right-outer, by assumption. 
\end{proof}
\begin{defn} \label{RSmirnovdef}
We say that a closed linear transformation $T: \dom{T} \subseteq F^2 _d \rightarrow F^2 _d$ is \emph{right-Smirnov} if $\dom{T}$ is dense and $T$ acts as right multiplication
by a right free Smirnov function $T(Z) = A(Z) ^{-1} B(Z) \in \scr{N} _d ^+ (R)$: For any $x \in \dom{T}$,
$$ (Tx) (Z) = x(Z) A(Z) ^{-1} B(Z). $$  If $T$ is right-Smirnov, we write $T \sim \scr{N} _d ^+ (R)$.
\end{defn}
If $T = M^R _{T(Z)}$ is right-Smirnov, and acts as right multiplication by the right free Smirnov function $T(Z)= A(Z) ^{-1} B(Z)$, then we write $T = T^\dag (R) = B^\dag (R) A ^\dag (R) ^{-1}$ or $T = M^R _B (M^R _{A} ) ^{-1} = M^R _B M^R _{A^{-1}}$.
\begin{remark} It is not difficult to see that the left and right
Smirnov classes are distinct. Indeed, the free function
$$ H(Z_1, Z_2) = (1-Z_2)^{-1}Z_1, $$  is evidently right Smirnov ($1-Z_2$ is right-outer, by Lemma \ref{freeouter}). However, it cannot correspond to a
densely defined {\em left} multiplication operator, and hence cannot
be left Smirnov. Indeed, if a nonzero function
$$  F=\sum_\alpha  f _\alpha L^\alpha 1,  $$ belongs to $F^2_2$, consider the free function
$$ H(Z)F(Z) = (1-Z_2)^{-1} Z_1 F(Z). $$  
Choose any word $\alpha$ for which $f_\alpha \neq 0$. Then for each $n\geq 1$,
the coefficient of the word $2^n1 \alpha$ in the expansion of $HF$ is
precisely $f_\alpha$, and hence the coefficients of this series are
evidently not square-summable. 
\end{remark}
\begin{lemma} \label{babySmirnov}
    If $T(Z)$ is a free holomorphic function and $T:= M^R _{T(Z)} : \dom{T} \rightarrow F^2 _d$ is densely-defined, then for any $Z \in \B _\N ^d$, $T^* :\dom{T}^* \rightarrow \mc{D} _T$ preserves the range of $K \{ Z, y , \cdot \}$:
    $$ T^* K_Z (yv^* ) = T^*  \kz = K \zyv{ Z, y , T(Z) v} = K_Z (yv^*T(Z) ^*).$$ In particular, $\dom{T^*}$ contains the linear span of the point evaluation vectors
    $$\bigvee _{Z \in \B  ^d _n; \ v,y\in \C ^n} \kz. $$
\end{lemma}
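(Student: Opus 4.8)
The plan is to verify directly from the definition of the Hilbert space adjoint that each point evaluation vector $\kz$ lies in $\dom{T^*}$ and that $T^*$ acts on it by the stated formula, thereby reproducing the bounded identity (\ref{rightpevalue}) under the weaker hypothesis that $T(Z)$ is merely free holomorphic rather than a bounded right multiplier. Fix $Z \in \B ^d _n$ and $y,v \in \C ^n$. Since $T(Z)$ is scalar-valued we have $T(Z) \in \C ^{n\times n}$ and hence $T(Z) v \in \C ^n$, so that $K \zyv{Z,y,T(Z)v}$ is a genuine point evaluation vector in $F^2 _d$; this is my candidate for $T^* \kz$. Because $T$ is densely defined its adjoint is well defined, and to conclude simultaneously that $\kz \in \dom{T^*}$ and that $T^* \kz = K \zyv{Z,y,T(Z)v}$ it suffices to establish
$$ \ip{\kz}{Tf} = \ip{K \zyv{Z,y,T(Z)v}}{f} \qquad \text{for all } f \in \dom{T}. $$

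To prove this identity I would argue as follows. For $f \in \dom{T}$ we have $Tf \in H^2 (\B ^d _\N )$ by the definition of the domain (Lemma \ref{BabySmirnov}), so the reproducing formula (\ref{eqn:kz-reproducing-formula}) is legitimately applicable to $Tf$ and gives $\ip{\kz}{Tf} = \ipcn{y}{(Tf)(Z)v}$. Since $T$ acts as right multiplication by the scalar-valued function $T(Z)$, one has $(Tf)(Z) = f(Z) T(Z)$ as an ordinary matrix product in $\C ^{n\times n}$, whence $\ip{\kz}{Tf} = \ipcn{y}{f(Z) T(Z) v}$. Applying the reproducing formula a second time, now with the vector $T(Z)v$ in place of $v$, identifies this with $\ip{K \zyv{Z,y,T(Z)v}}{f}$, which is exactly the desired equality.

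This computation shows that the linear functional $f \mapsto \ip{\kz}{Tf}$ agrees on $\dom{T}$ with the inner product against the fixed vector $K \zyv{Z,y,T(Z)v} \in F^2 _d$, and is therefore bounded; by the definition of the adjoint this places $\kz$ in $\dom{T^*}$ with $T^* \kz = K \zyv{Z,y,T(Z)v}$. The reformulation in terms of the kernel maps is then immediate from $y v^* T(Z)^* = y (T(Z)v)^*$ together with the definition (\ref{eqn:K-bigZ-def}) of $K_Z$. The final assertion follows because $\dom{T^*}$ is a linear subspace and hence contains the linear span of all the vectors $\kz$.

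I do not expect a serious obstacle: the argument is the same reproducing-kernel manipulation that produces the bounded identity (\ref{rightpevalue}), and the only genuinely new ingredient under the unbounded hypothesis is the observation that the resulting functional is bounded, which is precisely what certifies membership in $\dom{T^*}$. The one point requiring care is that the reproducing formula (\ref{eqn:kz-reproducing-formula}) is only available for honest elements of $H^2 (\B ^d _\N )$, so the domain condition $Tf \in H^2 (\B ^d _\N )$ must be invoked \emph{before} applying it to $Tf$; without this one could at best manipulate formal Taylor--Taylor series.
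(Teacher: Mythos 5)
Your proof is correct and is essentially identical to the paper's: the paper's entire argument is the same one-line reproducing-kernel computation $\ip{\kz}{TF} = \ipcn{y}{F(Z)T(Z)v} = \ip{K\zyv{Z,y,T(Z)v}}{F}$ for $F \in \dom{T}$, which it describes as the ``same easy calculation as in the bounded case.'' Your additional remarks---that $Tf \in H^2(\B^d_\N)$ must hold before invoking the reproducing formula, and that exhibiting the functional as an inner product against a fixed vector is what certifies $\kz \in \dom{T^*}$---are just careful spellings-out of the same argument.
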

\begin{proof}
    Same easy calculation as in the bounded case: for any $F \in \dom{T}$,
$$ \ip{\kz}{TF} = \ipcn{y}{F(Z)T(Z) v} = \ip{K \zyv{ Z,y,T(Z)v}}{F}. $$
\end{proof}
\begin{cor} \label{corelem}
    If $T \sim \scr{N} _d ^+ (R)$ is right-Smirnov and defined on its maximal domain, then $\bigvee _{Z, v,y} \kz$ is a core for $T^*$.
\end{cor}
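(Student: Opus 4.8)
The plan is to use the standard duality description of a core: if $S$ is a closed, densely-defined operator and $\mc{D} \subseteq \dom{S}$ is dense in the ambient Hilbert space, then $S | _{\mc{D}}$ is densely-defined and $\mc{D}$ is a core for $S$ if and only if $(S | _{\mc{D}}) ^* = S^*$. I will apply this with $S = T^*$ and $\mc{D} := \bigvee _{Z,v,y} \kz$. The span $\mc{D}$ is dense because $H^2 (\B ^d _\N )$ is by construction the completion of the span of the point evaluation vectors, and $\mc{D} \subseteq \dom{T^*}$ by Lemma \ref{babySmirnov}; hence $T^* | _{\mc{D}}$ is densely-defined. Since a right-Smirnov operator $T$ is closed and densely-defined, $T^{**} = T$, so it suffices to establish $(T^* | _{\mc{D}}) ^* = T$.

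The inclusion $T \subseteq (T^* | _{\mc{D}}) ^*$ is automatic: from $T^* | _{\mc{D}} \subseteq T^*$ we obtain $(T^* | _{\mc{D}}) ^* \supseteq (T^*) ^* = T^{**} = T$. For the reverse inclusion, let $f \in \dom{(T^* | _{\mc{D}}) ^*}$ and set $h := (T^* | _{\mc{D}}) ^* f$, so that $\ip{T^* g}{f} = \ip{g}{h}$ for every $g \in \mc{D}$. Taking $g = \kz$ and using $T^* \kz = K \zyv{Z,y,T(Z) v}$ from Lemma \ref{babySmirnov} together with the reproducing property (\ref{eqn:kz-reproducing-formula}), this becomes
$$ \ipcn{y}{f(Z) T(Z) v} = \ipcn{y}{h(Z) v}, \quad \quad Z \in \B ^d _n , \ y, v \in \C ^n . $$
Letting $Z, y, v$ vary, we conclude $f(Z) T(Z) = h(Z)$ at every point of $\B ^d _\N$.

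Because $h \in H^2 (\B ^d _\N )$, the free function $f(Z) T(Z) = h(Z)$ lies in $H^2 (\B ^d _\N )$, so $f$ belongs to the maximal domain of $T$ and $Tf = h$. This gives $(T^* | _{\mc{D}}) ^* \subseteq T$, so that $(T^* | _{\mc{D}}) ^* = T = T^{**}$; taking adjoints yields $\overline{T^* | _{\mc{D}}} = (T^* | _{\mc{D}}) ^{**} = T^*$, which is precisely the statement that $\mc{D}$ is a core for $T^*$. The argument is essentially soft functional analysis, and the only points needing care are that $\mc{D}$ is genuinely dense---so that $T^* | _{\mc{D}}$ has a well-defined adjoint---and that the adjoint identity may be tested against the spanning kernel vectors; both follow at once from the construction of $H^2 (\B ^d _\N )$ and from Lemma \ref{babySmirnov}. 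I therefore do not anticipate a serious obstacle beyond careful bookkeeping with the operator domains.
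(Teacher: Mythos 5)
Your proof is correct and takes essentially the same approach as the paper: both restrict $T^*$ to $\bigvee \kz$, use the identity $T^*\kz = K\zyv{Z,y,T(Z)v}$ together with the maximality of $\dom{T}$ to identify the adjoint of that restriction with $T$ itself, and then recover the core statement by duality. The only difference is bookkeeping---the paper first forms the closure $T_0^* := \overline{T^*|_{\mc{D}}}$ and shows its adjoint $T_0$ equals $T$, whereas you pass directly to $(T^*|_{\mc{D}})^*$ and take a double adjoint at the end, which is the same computation.
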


\begin{proof}
This follows from the maximality assumption on the domain of $T$. By definition of right-Smirnov, Definition \ref{RSmirnovdef}, and Lemma \ref{BabySmirnov}, $T$ has no non-trivial extensions which act as multiplication by $T(Z) \in \scr{N} _d ^+ (R)$.
Let $T_0 ^*$ be the closure of the restriction of $T^*$ to $\bigvee \kz$. Then $T_0 ^* \subseteq T^*$ is densely-defined and closed so that its adjoint,
$T_0 \supseteq T$ is densely-defined and closed. But,
$$ T_0 ^* \kz = K \zyv{Z, y, T(Z)v} = T^* \kz, $$ so that $T_0$ acts as right multiplication by $T(Z)$ on its domain. By Lemma \ref{BabySmirnov} $T_0 = T$.
\end{proof}

The following lemma shows that if $T \sim \scr{N} _d ^+ (R)$, then all free polynomials $\C \{ Z_1 , ... , Z _d \}$ belong to the domain of $T^*$ (by showing that, in fact, any free polynomial $p(L)1$ is equal to some $\kz$ for a certain choice of jointly nilpotent $Z$):

\begin{lemma}
    For any $\alpha \in \F ^d$ there exist $n\in \N$, $Z \in \B ^d _n $ and $v^*,y \in \C ^n$ so that $\kz = L^\alpha 1$.
\end{lemma}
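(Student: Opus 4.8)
The plan is to pass to the Fock space model and read off the coefficients of a point evaluation vector from the explicit inversion formula (\ref{pevalimg}). Denote the given word by $\beta = i_1 \cdots i_m \in \F ^d$. By (\ref{pevalimg}), $\scr{U} ^{-1} \kz = \sum_{\alpha \in \F ^d} \ip{Z^\alpha v}{y}\, L^\alpha 1$, so, identifying $F^2 _d$ with $H^2 (\B ^d _\N)$, the desired identity $\kz = L^\beta 1$ is equivalent to producing $n \in \N$, a strict row contraction $Z \in \B ^d _n$, and vectors $y,v \in \C ^n$ with
$$ \ip{Z^\alpha v}{y} = \delta_{\alpha , \beta} \qquad \text{for every } \alpha \in \F ^d . $$
Everything therefore reduces to constructing a jointly nilpotent $d$-tuple together with two vectors that ``detect'' the word $\beta$ and nothing else.

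First I would take $n = m+1$ with orthonormal basis $\epsilon _0 , \ldots , \epsilon _m$ of $\C ^n$, fix a scalar $0 < c < 1$, and define $Z = (Z_1 , \ldots , Z_d)$ on the basis by $Z_k \epsilon _j := c\, \epsilon _{j-1}$ if $1 \le j \le m$ and $k = i_j$, and $Z_k \epsilon _j := 0$ otherwise (so in particular $Z_k \epsilon _0 = 0$ for all $k$). Informally, the letters of $\beta$ are the edges of a single directed path $\epsilon _m \to \epsilon _{m-1} \to \cdots \to \epsilon _0$ along which, at the $j$-th step, only the letter $i_j$ is permitted to act. Setting $v := \epsilon _m$ and $y := c^{-m} \epsilon _0$, telescoping the definition gives $Z^\beta v = Z_{i_1} \cdots Z_{i_m} \epsilon _m = c^m \epsilon _0$, hence $\ip{Z^\beta v}{y} = 1$.

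It remains to see that every other word contributes nothing. Since each nonzero factor $Z_k$ lowers the basis index by exactly one, $Z^\alpha \epsilon _m$ is a multiple of $\epsilon _{m - |\alpha|}$ whenever it is nonzero; so $\ip{Z^\alpha v}{y}$ vanishes unless $|\alpha| = m$, and among words of length $m$ the one-step-at-a-time constraint forces $\alpha = \beta$. This yields $\ip{Z^\alpha v}{y} = \delta_{\alpha , \beta}$, and the empty word is recovered as the degenerate case $m = 0$, $n = 1$, $Z = 0$, $v = y = 1$.

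The one substantive point --- and the only real obstacle --- is to confirm that $Z$ lands in the \emph{open} ball $\B ^d _n$. Computing $\sum_{k=1}^d Z_k Z_k^*$ directly from the definition gives the diagonal operator equal to $c^2$ on each of $\epsilon _0 , \ldots , \epsilon _{m-1}$ and $0$ on $\epsilon _m$, so $\bigl\| \sum_k Z_k Z_k^* \bigr\| = c^2 < 1$ and $Z$ is a strict row contraction. This is precisely where the scaling matters: with $c = 1$ the same tuple would satisfy $\| Z \| = 1$, placing it on the boundary of the ball, and absorbing the factor $c^{-m}$ into $y$ is exactly what lets us keep $Z$ inside $\B ^d _\N$ while preserving the normalization $\ip{Z^\beta v}{y} = 1$.
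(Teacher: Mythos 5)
Your proof is correct, and while it implements the same overall strategy as the paper --- reduce, via (\ref{pevalimg}), to exhibiting a jointly nilpotent tuple $Z \in \B ^d _n$ and vectors $y, v$ with $\ip{Z^\gamma v}{y} = \delta _{\gamma , \beta}$ for all $\gamma \in \F ^d$ --- the construction you use is genuinely different. The paper fixes $0 < r < 1$ and takes $Z = r\Lambda$, where $\Lambda = (\Lambda _1 , \dots , \Lambda _d)$ is the compression of $(L_1 , \dots , L_d)$ to the subspace of $F^2 _d$ spanned by all $e_\gamma$ with $|\gamma| \leq |\beta|$ (this subspace is $L^*$-invariant, so compressions of products of the $L_k$ are products of the $\Lambda _k$), together with $y = r^{-|\beta|} e_\beta$ and $v = e_\emptyset$; coefficient extraction at this single point then yields the lemma. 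You instead build the minimal model: a directed path on $\C ^{|\beta| + 1}$ whose $j$-th edge can be traversed only by the letter $i_j$. Your tuple is more economical --- dimension $|\beta| + 1$ rather than $1 + d + \cdots + d^{|\beta|}$ --- and, up to an index shift, it is exactly the tuple $C(\alpha)$ with $Z(\alpha) = r C(\alpha) \in \B ^d _{|\alpha| + 1}$ that the corollary following this lemma invokes but does not define elsewhere in the text, so your construction is the one that the direct-sum argument for general polynomials there actually needs. What the paper's compression model buys in exchange is simultaneity: the single point $Z = r\Lambda$ serves every word of length at most $|\beta|$ (only $y$ changes), so by linearity of $K\{Z,y,v\}$ in $y$, any polynomial of degree at most $|\beta|$ is itself a kernel vector at that one point, with no direct sums required. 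Both arguments place $Z$ strictly inside the ball by the same device: the scaling $r$ (your $c$), compensated by the factor $r^{-|\beta|}$ absorbed into $y$.
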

\begin{proof}
Fix $\alpha$, and let $\mathcal H$ be the finiite-dimensional subspace of $F^2_d$ spanned by the basis vectors $e_\beta = L^\beta 1$ for which $|\beta|\leq |\alpha|$. We let $\Lambda_1, \dots, \Lambda_d$ denote the compressions of the shifts $L_1, \dots, L_d$ to the space $\mathcal H$. Fix now $0<r<1$. For any function $F\in F^2_d$, the function $F_r (Z):=F(rZ)$ is expressible as a norm-convergent power series in $L$ and therefore belongs to $L^\infty_d$. Moreover, writing $F_r=\sum_{\gamma\in \mathbb F^d} f_\gamma r^{|\gamma|} L^\gamma$ we have by construction
\begin{equation}
r^{|\alpha|} f_\alpha = \langle e_\alpha, F_r(L)e_{\varnothing} \rangle_{F^2_d} = \langle  e_\alpha, F_r(\Lambda) e_{\varnothing}\rangle_{\mathcal H} 
\end{equation}
and so if we put $y=r^{-|\alpha|}e_\alpha, u=e_\varnothing$, and $Z=r^{|\alpha|}\Lambda^\alpha$, we obtain $f_\alpha = \langle F, \kz\rangle$. 
\end{proof}

Let $Z(\alpha) , y_\alpha, v_\alpha$ be such that $K \{ Z(\alpha ) , y _\alpha , v_\alpha \} (Z) = Z^\alpha \simeq L^\alpha 1$. 
\begin{cor}
Suppose that $p(Z) = \sum _\alpha Z^\alpha p_\alpha$ is any free polynomial, and set $p:= p(L) 1 \in F^2 _d$. Then, for any fixed $0<r<1$, one has $p = \kz$ with $Z := \bigoplus Z(\alpha)$, $y = \bigoplus y_\alpha$, and $v^* := \bigoplus v^* _\alpha $, where $Z(\alpha) = r C (\alpha) \in \B ^d _{|\alpha| +1}$,  $y_\alpha = \frac{1}{r^{|\alpha| } }\frac{p_\alpha}{|p_\alpha | ^{1/2}} e_1  \in \C ^{|\alpha| +1}$, and $v^* _\alpha := |p _\alpha | ^{1/2} e_{|\alpha| +1} \in \C ^{|\alpha | +1}$.
\end{cor}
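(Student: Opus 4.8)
The plan is to recognize this corollary as a direct-sum, suitably rescaled, version of the preceding lemma, so that the whole argument reduces to the reproducing formula \eqref{eqn:kz-reproducing-formula} together with the fact that free functions respect direct sums. First I would pin down the matrices $C(\alpha)$. For a word $\alpha = i_1 \cdots i_k$ with $k = |\alpha|$, take $C(\alpha) = (C_1(\alpha), \ldots, C_d(\alpha))$ acting on $\C^{k+1}$ with standard basis $e_1, \ldots, e_{k+1}$, defined on basis vectors by $C_{i_j}(\alpha) e_{j+1} = e_j$ for $1 \le j \le k$ and $C_i(\alpha) e_m = 0$ in all other cases; thus each $C_i(\alpha)$ has at most one nonzero entry and the tuple shifts the single chain $e_{k+1} \to e_k \to \cdots \to e_1$ according to the letters of $\alpha$. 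A one-line check gives $\sum_i C_i(\alpha) C_i(\alpha)^*$ equal to the projection onto $\mathrm{span}\{e_1, \ldots, e_k\}$, so $C(\alpha)$ is a row contraction and $Z(\alpha) := r\, C(\alpha)$ is a strict row contraction, i.e. $Z(\alpha) \in \B^d_{k+1}$, for every $0 < r < 1$.

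The crux is the monomial-selection identity $\ip{e_1}{C(\alpha)^\gamma e_{k+1}} = \delta_{\gamma, \alpha}$, valid for every $\gamma \in \F^d$. Reading the product $C(\alpha)^\gamma e_{k+1}$ from the right, each factor either advances one step down the chain (when the relevant letter of $\gamma$ matches the required letter of $\alpha$) or annihilates the vector; reaching $e_1$ from $e_{k+1}$ therefore forces exactly $k$ matching steps, which happens precisely when $\gamma = \alpha$. Granting this, I would compute, for an arbitrary $F = \sum_\gamma f_\gamma L^\gamma 1 \in F^2_d$ and the stated $y_\alpha, v_\alpha$, the quantity $\ip{y_\alpha}{F(Z(\alpha)) v_\alpha}$ by expanding $F(r\,C(\alpha)) = \sum_\gamma f_\gamma r^{|\gamma|} C(\alpha)^\gamma$: the identity collapses the sum to the single term $\gamma = \alpha$, and the normalizations $r^{-|\alpha|}$, $p_\alpha / |p_\alpha|^{1/2}$ and $|p_\alpha|^{1/2}$ are chosen exactly so that every scaling cancels and one is left with $\overline{p_\alpha}\, f_\alpha = \ip{p_\alpha e_\alpha}{F}$. (Words with $p_\alpha = 0$ are simply omitted from the direct sum.)

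Finally I would assemble the blocks. Because free functions respect direct sums, $F(\bigoplus_\alpha Z(\alpha)) = \bigoplus_\alpha F(Z(\alpha))$, and so with $Z = \bigoplus Z(\alpha)$, $y = \bigoplus y_\alpha$, $v = \bigoplus v_\alpha$ the reproducing formula \eqref{eqn:kz-reproducing-formula} gives $\ip{\kz}{F} = \ip{y}{F(Z) v} = \sum_\alpha \ip{y_\alpha}{F(Z(\alpha)) v_\alpha} = \sum_\alpha \ip{p_\alpha e_\alpha}{F} = \ip{p}{F}$; since $F$ ranges over a spanning set this yields $\kz = p$, as claimed. The only genuinely substantive point is the monomial-selection identity of the second paragraph, which is what isolates the coefficient of $Z^\alpha$; everything else is bookkeeping of the normalizing constants, and in fact the whole statement is nothing more than the construction in the preceding lemma carried out one word at a time and reassembled by direct sum, with the scalar weights inserted to reproduce the coefficients $p_\alpha$.
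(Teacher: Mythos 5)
Your proposal is correct and takes essentially the same approach as the paper: the corollary is exactly the paper's single-word construction $Z(\alpha) = rC(\alpha)$ (the nilpotent chain matrices built from the letters of $\alpha$), rescaled by the coefficients $p_\alpha$ and reassembled by a finite direct sum, with everything reduced to the reproducing formula and the fact that free functions respect direct sums. The only cosmetic difference is that you verify the selection identity $\ip{e_1}{C(\alpha)^\gamma e_{|\alpha|+1}} = \delta_{\gamma,\alpha}$ by tracking the chain of basis vectors, whereas the paper runs an equivalent computation with matrix units, the backward shift and traces.
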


\begin{remark}
As before, one can apply the above corollary to prove that free polynomials, $\bigvee L^\alpha 1 = \C \{ L_1 , ... , L_d \}$, are a core for the adjoint of any right-Smirnov $T$, defined on its maximal domain. 
\end{remark}

\section{Operators affiliated to the right free shift}

The main goal of this section is to prove that any $T \sim R^\infty _d$ is such that $T \sim \scr{N} _d ^+ (R)$. That is, any closed, densely-defined linear operator affiliated to the right free shift acts as right multiplication by a right free Smirnov function.

\subsection{Some general facts concerning closed operators} \label{closedops}

Let $A : \dom{A} \subseteq \H \rightarrow \J$ be a linear transformation. Recall that $A$ is called \emph{closed} if its graph, 
$$ G(A) := \{ h \oplus Ah | \ h \in \dom{A} \} \subseteq \H \oplus \J,$$ is a closed subspace. 
A linear subspace $\mc{D} \subseteq \dom{A}$ is called a \emph{core} for a closed linear transformation $A$ if $A = \ov{A | _{\mc{D}}}$, \emph{i.e.} the minimal closed extension of $A | _{\mc{D}}$ is $A$.
A densely-defined linear transformation $A$ has closed extensions if and only if its adjoint, $A^*$, is densely-defined. (Further recall that the adjoint is always a closed linear transformation.) In this case if $A$ is closed and $J$ is the anti-idempotent unitary:
$$ J:= \bbm 0 & -1 \\ 1 & 0 \ebm : \H \oplus \J \rightarrow \J \oplus \H, $$ then
$$ G(A^*) = (J G(A) ) ^\perp = J G(A) ^\perp. $$ 

Consider the following theorem of von Neumann (see, for example, \cite[Theorem 5.1.9]{AnalysisNow}):

\begin{thm} \label{vNthm}
    Let $A : \dom{A} \subseteq \H \rightarrow \J$ be a closed, densely-defined linear transformation. Then $A^* A$ is a closed densely-defined positive self-adjoint operator
on the domain: $$ \dom{A^* A} = \{ h \in \dom{A} | \ Ah \in \dom{A^*} \}. $$
Moreover, for any $h \in \H$ the extremization problem
$$ D^2 _h := \inf _{g \in \dom{A} }  \left( \| h -g \|  ^2 + \| A g\| ^2 \right)$$ has a unique solution $g_h \in \dom{A^* A } \subset \dom{A}$ and
$$ (I + A ^* A ) g_h = h. $$ In particular $I +A ^* A$ is a positive bijection from $\dom{A^*A}$ to $\mathcal H$, and $\Delta _A :=(I +A^* A) ^{-1}$ is a positive contraction. The linear domains $\dom{A^*A}$ and $A\dom{A^*A}$ are cores for $A, A^*$, respectively.
\end{thm}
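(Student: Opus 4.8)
The plan is to deduce every assertion from the orthogonal geometry of the graph $G(A) \subseteq \H \oplus \J$, which is closed by hypothesis. First I would compute $G(A)^\perp$ explicitly: a pair $(u,v)$ lies in $G(A)^\perp$ exactly when $\ip{u}{h} + \ip{v}{Ah} = 0$ for all $h \in \dom{A}$, that is, when $v \in \dom{A^*}$ and $A^*v = -u$. Thus
\[ G(A)^\perp = \{ (-A^*v , v) \mid v \in \dom{A^*} \}, \]
which is precisely the relation $G(A^*) = J G(A)^\perp$ read backwards. Closedness of $G(A)$ then yields the orthogonal decomposition $\H \oplus \J = G(A) \oplus G(A)^\perp$.

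Next I would feed the vector $(h,0)$, for arbitrary $h \in \H$, into this decomposition. Writing $(h,0) = (g, Ag) + (-A^*v, v)$ with $g \in \dom{A}$ and $v \in \dom{A^*}$, the second coordinate forces $v = -Ag$, so $Ag \in \dom{A^*}$, i.e. $g \in \dom{A^*A} = \{ g \in \dom{A} \mid Ag \in \dom{A^*} \}$; the first coordinate then reads $h = g + A^*Ag = (I + A^*A) g$. Uniqueness of the decomposition makes $g$ unique, so $I + A^*A$ is a bijection of $\dom{A^*A}$ onto $\H$. For the extremization problem I would note the identity $\| h - g \|^2 + \| Ag \|^2 = \| (h,0) - (g,Ag) \|^2$, so that $D_h^2$ is the squared distance from $(h,0)$ to the closed subspace $G(A)$; its unique minimizer is the orthogonal projection $(g_h, A g_h)$ of $(h,0)$ onto $G(A)$, and the defining condition $(h,0) - (g_h, Ag_h) \in G(A)^\perp$ is exactly the equation $(I + A^*A) g_h = h$ already obtained.

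The operator-theoretic conclusions then follow by soft arguments. Positivity and symmetry of $A^*A$ are immediate from $\ip{A^*Ag}{g} = \| Ag \|^2 \geq 0$, which also gives $\ip{(I+A^*A)g}{g} \geq \| g \|^2$ and hence injectivity of $I + A^*A$. Therefore $\Delta_A := (I + A^*A)^{-1}$ is everywhere defined; the lower bound transfers through Cauchy--Schwarz to $\| \Delta_A f \| \leq \| f \|$, and the same computation shows $\Delta_A$ is positive and self-adjoint, hence injective, so that its range $\dom{A^*A}$ is dense. Now $A^*A$ is densely defined and symmetric and $I + A^*A$ is surjective; invoking the standard fact that a symmetric operator whose range is all of $\H$ is self-adjoint gives that $I + A^*A$, and hence $A^*A$, is self-adjoint (and in particular closed).

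Finally, for the core statements I would argue by orthogonality inside $G(A)$. If $(h, Ah)$ is orthogonal to $(g, Ag)$ for every $g \in \dom{A^*A}$, then using $Ag \in \dom{A^*}$ to move $A^*$ across one finds $\ip{h}{g} + \ip{Ah}{Ag} = \ip{h}{(I+A^*A)g}$; since $I + A^*A$ already exhausts $\H$ on $\dom{A^*A}$, this forces $h = 0$, so $\{(g, Ag) \mid g \in \dom{A^*A}\}$ is dense in $G(A)$ and $\dom{A^*A}$ is a core for $A$. The companion statement for $A^*$ follows by the symmetric argument, run with $A$ and $A^*$ interchanged (legitimate since $A^{**} = A$ for closed densely-defined $A$). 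I expect the main obstacle to be the self-adjointness step: one must keep the domains of $A$, $A^*$, $A^*A$ and $\Delta_A$ carefully separated and verify the hypotheses of the symmetric-plus-surjective criterion, whereas the remaining steps are essentially bookkeeping on the single closed subspace $G(A)$.
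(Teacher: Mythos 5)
Your argument is the classical von Neumann graph-decomposition proof; since the paper gives no proof of this theorem (it simply cites \cite[Theorem 5.1.9]{AnalysisNow}), this is also the intended route, and everything through the core statement for $A$ is correct: the identification of $G(A)^\perp$, the analysis of the decomposition of $(h,0)$, the distance interpretation of the extremization problem, the boundedness and positivity of $\Delta_A$, the symmetric-plus-surjective criterion for self-adjointness, and the orthogonality argument inside $G(A)$ are all sound.

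The gap is in your final step. The theorem asserts that $A\dom{A^*A}$ is a core for $A^*$, whereas your ``symmetric argument with $A$ and $A^*$ interchanged'' proves that $\dom{AA^*}$ is a core for $A^*$. These are different linear manifolds, and neither contains the other in general: a vector $v=Ag$ with $g\in\dom{A^*A}$ lies in $\dom{AA^*}$ only if $A^*Ag\in\dom{A}$, and an element of $\dom{AA^*}$ need not lie in $\ran{A}$ at all. Moreover, the claim as stated cannot be proved without a further hypothesis, because it is false in general: repeating your orthogonality argument inside $G(A^*)$, suppose $v\in\dom{A^*}$ and $(v,A^*v)\perp(Ag,A^*Ag)$ for all $g\in\dom{A^*A}$. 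Taking $g=\Delta_A A^*v$, so that $A^*Ag=A^*v-g$, gives
\[ 0 \;=\; \ip{Ag}{v}+\ip{A^*v-g}{A^*v} \;=\; \|A^*v\|^2, \]
using $\ip{g}{A^*v}=\ip{Ag}{v}$; hence $A^*v=0$, and the orthogonality condition then reduces to $v\perp A\dom{A^*A}$, i.e. $v\in\ker{A^*}$. Conversely, every $v\in\ker{A^*}$ is orthogonal to all pairs $(Ag,A^*Ag)$. Thus the graph closure of $A^*|_{A\dom{A^*A}}$ equals $G(A^*)\ominus\left(\ker{A^*}\oplus\{0\}\right)$, so $A\dom{A^*A}$ is a core for $A^*$ if and only if $\ker{A^*}=\{0\}$, i.e. if and only if $A$ has dense range (take $A=0$, or $A=M^R_B$ for an inner $B$, to see the failure). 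So this last assertion needs either a dense-range hypothesis, or $A\dom{A^*A}$ should be replaced by $\dom{AA^*}$, which is what your interchange argument actually yields; the rest of your proof stands.
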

\begin{defn} \label{qadef}
Let $A$ be as in Theorem~\ref{vNthm}. Define $X : G(A) \rightarrow \H$ and $Y: \H \rightarrow G(A)$, by
$$ X (h \oplus Ah ) := h; \quad \quad \mbox{and} \quad Y h := \Delta _A ^{-1} h \oplus A \Delta _A ^{-1} h. $$
\end{defn}

\begin{lemma} \label{qalemma}
Both $X, Y$ are quasi-affinities and $Y = X^*$.
\end{lemma}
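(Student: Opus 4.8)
The plan is to handle the three claims in order: $X$ is a quasi-affinity; $Y = X^*$; and then $Y$ is a quasi-affinity by standard facts about adjoints. First I would dispose of $X$. Since $A$ is closed, $G(A)$ is a closed subspace of $\H \oplus \J$, hence a Hilbert space in the inherited inner product $\langle h_1 \oplus Ah_1, h_2 \oplus Ah_2\rangle = \langle h_1, h_2\rangle_\H + \langle Ah_1, Ah_2\rangle_\J$, and $X$ is the restriction to $G(A)$ of the first-coordinate projection. Thus $\|X(h \oplus Ah)\|^2 = \|h\|^2 \le \|h\|^2 + \|Ah\|^2 = \|h \oplus Ah\|^2_{G(A)}$, so $X$ is a contraction; it is injective because $X(h \oplus Ah) = 0$ forces $h = 0$ and hence $h \oplus Ah = 0$; and its range is exactly $\dom{A}$, which is dense by hypothesis. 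So $X$ is bounded, injective, and has dense range.

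The substance is the identity $Y = X^*$. Fix $k \in \H$ and an arbitrary graph vector $g = h \oplus Ah$. By Theorem~\ref{vNthm} the vector $\Delta_A k = (I + A^*A)^{-1}k$ lies in $\dom{A^*A}$, so $A\Delta_A k \in \dom{A^*}$ and $(I + A^*A)\Delta_A k = k$; using the definition of $A^*$ to move $A$ across the second inner product,
\begin{align*}
\langle g, \Delta_A k \oplus A\Delta_A k\rangle_{G(A)} &= \langle h, \Delta_A k\rangle_\H + \langle Ah, A\Delta_A k\rangle_\J \\
&= \langle h, (I + A^*A)\Delta_A k\rangle_\H = \langle h, k\rangle_\H = \langle Xg, k\rangle_\H.
\end{align*}
As this holds for every $g \in G(A)$, the vector $\Delta_A k \oplus A\Delta_A k$ is precisely $X^*k$, and comparison with Definition~\ref{qadef} gives $Y = X^*$. (The same computation pins down the exponent in Definition~\ref{qadef}: it must read $\Delta_A$, the positive contraction carrying $\H$ into $\dom{A^*A}$, rather than $\Delta_A^{-1}$, which is the unbounded operator $I+A^*A$.)

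Finally, $Y$ being a quasi-affinity requires no further computation. For the bounded operator $X$ one has $\ker{X^*} = (\ran{X})^\perp$ and $\overline{\ran{X^*}} = (\ker{X})^\perp$. Since $\ran{X} = \dom{A}$ is dense, $\ker{Y} = \ker{X^*} = \{0\}$; and since $X$ is injective, $\overline{\ran{Y}} = \overline{\ran{X^*}} = (\ker{X})^\perp = G(A)$. Hence $Y = X^*$ is bounded, injective, with dense range. I expect the only genuine obstacle to be the middle paragraph: one must invoke von Neumann's theorem to know that $\Delta_A$ maps $\H$ into $\dom{A^*A}$ (so that $(I+A^*A)\Delta_A = I$ there) before the adjoint relation $\langle Ah, A\Delta_A k\rangle = \langle h, A^*A\Delta_A k\rangle$ is legitimate, after which density of $\dom{A}$ strips off the outer $h$.
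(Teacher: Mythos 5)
Your proof is correct, and while its engine is the same computation the paper uses, the logic is organized differently in a way worth noting. Both arguments turn on one identity: expand the graph inner product of $\Delta_A k \oplus A\Delta_A k$ against an arbitrary graph vector $h \oplus Ah$ (writing $\Delta_A$ for the bounded inverse $(I+A^*A)^{-1}$), use $A\Delta_A k \in \dom{A^*}$ --- supplied by Theorem \ref{vNthm} --- to move $A$ across, and collapse $(I+A^*A)\Delta_A k$ to $k$. The paper, however, first verifies that $Y$ is a quasi-affinity by three direct checks (the norm estimate $\|Yx\|^2 = \ip{x}{(I+A^*A)^{-1}x}$ for boundedness, positivity and injectivity of that operator for injectivity of $Y$, and the fact that $\dom{A^*A}$ is a core for $A$ for density of $\ran{Y}$ in $G(A)$), and only then runs the computation to conclude $Y^* = X$. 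You run the computation first, pinning down $X^*k$ pointwise as the graph vector $\Delta_A k \oplus A\Delta_A k$; this yields $Y = X^*$ and boundedness of $Y$ in one stroke, after which injectivity and dense range of $Y$ come free from the dualities $\ker{X^*} = \ran{X}^\perp$ and $\overline{\ran{X^*}} = \ker{X}^\perp = G(A)$. The payoff is a leaner proof: no separate norm estimate, and no appeal to the core property from von Neumann's theorem, since density of $\ran{Y}$ falls out of injectivity of $X$. Your parenthetical remark is also a correct catch: Theorem \ref{vNthm} defines $\Delta_A := (I+A^*A)^{-1}$, yet Definition \ref{qadef} and the paper's own proof write $\Delta_A^{-1}$ for that same bounded contraction (later sections re-adopt the convention $\Delta_T := I + T^*T$, which makes the usage consistent); as you observe, the mathematics forces the bounded inverse in the definition of $Y$, since otherwise $Y$ would not even be everywhere defined, let alone bounded.
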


\begin{proof}
First, $X$ is clearly contractive. It is injective since $G(A) \subseteq \H \oplus \J$ is a graph, it has dense range since we assume $A$ is densely defined (in $\H$). This proves $X$ is a quasi-affinity.

For any $x \in \H$, consider
\ba \| Y x \| ^2 &= & \| \Delta _A ^{-1} x \oplus A \Delta _A ^{-1} x \| ^2 \nn \\
	& = & \ip{ (I +A ^* A ) \Delta _A ^{-1} x }{ \Delta _A ^{-1} x } \nn \\
	& = & \ip{x}{\Delta _A ^{-1} x } \nn \\
	& \leq & \| \Delta _A  ^{-1} \| \| x \| ^2. \nn \ea
This shows that $Y$ is bounded, and also we see that $Yx =0$ if and only if $\ip{x}{\Delta _A ^{-1} x } =0$. Since $\Delta _A ^{-1}$ is positive and injective, this happens if and only if $x =0$, so that $Y$ is injective.
Also, by definition $\ran{Y} = \bigvee \{ \Delta _A ^{-1} x \oplus A \Delta _A ^{-1} x | \ x \in \H \}$, and this is dense in $G(A)$ since $\ran{\Delta _A } ^{-1} = \dom{A^* A}$ is a core for $A$. This proves $Y$ is a quasi-affinity.

Finally, given any $y \oplus Ay \in G(A)$ and any $x \in \H$, consider
\ba \ip{x}{Y^* (y \oplus Ay ) } & = & \ip{Yx}{y \oplus Ay} \nn \\
& = & \ip{ \Delta _A ^{-1} x \oplus A \Delta _A ^{-1} x }{ y \oplus Ay} \nn \\
& = & \ip{ (I +A^*A) \Delta _A ^{-1} x }{y} \nn \\
& = & \ip{x}{y}. \nn \ea This proves that $Y^* (y \oplus Ay ) = y = X (y \oplus Ay)$, so that $Y ^* = X$.
\end{proof}

\begin{remark} \label{Toepdom}
    Note that $\ran{XX^*} = \ran{\Delta _A ^{-1}} = \dom{I +A^*A}=\dom{A^*A}$.
\end{remark}

\subsection{The wandering space for $G(T)$} 

Let $T:\dom{T} \subseteq F^2 _d \rightarrow F^2 _d$ be any closed linear transformation affiliated to the right free shift. (We do not assume that $T$ is densely defined.) As is straightforward to check, a closed linear transformation, $T$, is affiliated to $R^\infty _d$ if and only if its graph, $G(T)$, is an invariant subspace for $L\otimes I_2$ so that $L_T := (L\otimes I_2 )| _{G(T)}$ is a (row) isometry. Recall that by the Popescu-Wold decomposition of row isometries, any row isometry $\Pi : \H \otimes \C^d \rightarrow \H$ decomposes as the direct sum of several copies of $L$ and a Cuntz unitary (an onto row isometry) \cite{Pop-dilation}. Since $L$ is pure (\emph{i.e.} it has no Cuntz unitary direct summand), it follows from \cite{Pop-dilation} that $L_T$ is also pure. Recall that a vector $h \in \H$ is called \emph{wandering} for a row isometry $\Pi : \H \otimes \C ^d \rightarrow \H$ (or wandering for $G(\Pi)$) if

 $$ \ip{\Pi ^\alpha h}{\Pi ^\beta h} = \delta _{\alpha, \beta } \| h \| ^2. $$
(That is, $\{\Pi^\alpha h: \alpha\in \mathbb F^d\}$ is an orthogonal set, and orthonormal if $\|h\|=1$. )
By \cite[Theorem 1.3]{Pop-dilation}, every non-zero vector in the \emph{wandering space}, \be \mc{W} (\Pi) = \H \ominus \Pi (\H \otimes \C ^d ), \label{wanddef} \ee
is wandering, and the span of the (orthogonal) subspaces $\Pi^\alpha \mathcal W$, as $\alpha$ ranges over $\mathbb F^d$, is dense in $\mathcal H$. 
Let $\{ \theta _k \} _{k=1} ^N$, where $N \in \N \cup \{\infty \}$, be an orthonormal basis of wandering vectors for $G(T)$.  For a wandering vector $w\in \mathcal W(T)$ let
$$ F^2(w) :=\bigvee (L^\alpha \otimes I_2 ) w ) ^{-\| \cdot \| }$$   It is easy to see that if $w_1 \perp w_2 \in \mc{W} (T)$ are orthogonal wandering vectors then 
$$ F^2 (w_1) \perp F^2 (w_2). $$ The Popescu-Wold decomposition further implies that
$$ G(T) = \bigoplus _{k=1} ^N F^2 (\theta _k), $$ is the direct sum of mutually orthogonal cyclic invariant subspaces for $L \otimes I_2$ with mutually orthogonal cyclic unit wandering vectors $\theta _k$. 

\begin{lemma}
For any $1 \leq k \leq N$, right multiplication by $\theta _k =: a_k \oplus b_k \in F^2 _d \otimes \C ^2$, $M^R _{\theta _k}: F^2 _d \rightarrow F^2 _d \otimes \C ^2$ is an isometry, \emph{i.e.} $$ \Theta _k (R) := M^R _{\theta _k} = M^R _{\bbm a_k \\ b_k \ebm} =: \bbm A_k (R) \\ B_k (R) \ebm $$ is a two-component inner right multiplier with scalar component right multipliers $A_k (R), B_k (R) \in \scr{R} _d$. 
\end{lemma}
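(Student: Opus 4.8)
The plan is to recognize each $\theta _k$, by way of the wandering property, as the symbol of an inner column right multiplier, and then to read off the two scalar components. First I would define a linear map $\Theta _k$ on the dense subspace of free polynomials $\bigvee _{\alpha \in \F ^d} L^\alpha 1 \subseteq F^2 _d$ by
$$ \Theta _k (L^\alpha 1) := (L^\alpha \otimes I_2 ) \theta _k, \quad \quad \alpha \in \F ^d. $$
Since $\{ L^\alpha 1 \} _{\alpha \in \F ^d}$ is the standard orthonormal basis of $F^2 _d$, and since $\theta _k$ is a unit wandering vector for $L \otimes I_2$, the set $\{ (L^\alpha \otimes I_2 ) \theta _k \} _{\alpha \in \F ^d}$ is orthonormal by \cite{Pop-dilation}. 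Thus $\Theta _k$ carries an orthonormal basis to an orthonormal set: $\ip{\Theta _k L^\alpha 1}{\Theta _k L^\beta 1} = \delta _{\alpha , \beta} = \ip{L^\alpha 1}{L^\beta 1}$. Hence $\Theta _k$ is isometric on its dense domain and extends uniquely to an isometry $\Theta _k : F^2 _d \rightarrow F^2 _d \otimes \C ^2$.

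Next I would verify that this isometry intertwines the left shift, so that it is a genuine right multiplier. On polynomials,
$$ \Theta _k L_i (L^\alpha 1) = \Theta _k L^{i\alpha} 1 = (L^{i\alpha} \otimes I_2 ) \theta _k = (L_i \otimes I_2 )(L^\alpha \otimes I_2 ) \theta _k = (L_i \otimes I_2 ) \Theta _k (L^\alpha 1), $$
so $\Theta _k L_i = (L_i \otimes I_2 ) \Theta _k$ for each $1 \leq i \leq d$, and by continuity this holds on all of $F^2 _d$. A bounded operator $F^2 _d \rightarrow F^2 _d \otimes \C ^2$ intertwining $L$ with $L \otimes I_2$ lies in the column right multiplier space $R^\infty _d \otimes \L (\C , \C ^2 ) \simeq H^\infty _R (\B ^d _\N ) \otimes \L (\C , \C ^2 )$, by the identification of $R^\infty _d$ as the commutant of the left free shift used throughout Section \ref{prelim}. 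Equivalently $\Theta _k = M^R _{\theta _k}$ acts as right multiplication by the $\C ^2$-valued free holomorphic function with symbol $\theta _k = \Theta _k 1$. As $\Theta _k$ is an isometry, $M^R _{\theta _k}$ is inner, and
$$ \Theta _k (R) = M^R _{\theta _k} = \bbm A_k (R) \\ B_k (R) \ebm \in \scr{R} _d (\C , \C ^2 ) $$
is a two-component inner right multiplier, proving the first assertion.

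It then remains to identify the scalar components. Writing $\theta _k = a_k \oplus b_k$ with $a_k , b_k \in F^2 _d$, the components are recovered as $A_k = (I_{F^2 _d} \otimes e_1 ^* ) \Theta _k$ and $B_k = (I_{F^2 _d} \otimes e_2 ^* ) \Theta _k$ by composing with the coordinate functionals of $\C ^2$. Since these functionals commute with $L \otimes I_2$, the intertwining relation for $\Theta _k$ descends to $A_k , B_k$, so each of $A_k , B_k$ commutes with every $L_i$ and therefore lies in $R^\infty _d$, with symbols $a_k = A_k 1$, $b_k = B_k 1$. The isometry $\Theta _k ^* \Theta _k = A_k ^* A_k + B_k ^* B_k = I_{F^2 _d}$ gives, for all $f \in F^2 _d$, the bound $\| A_k f \| ^2 \leq \| A_k f \| ^2 + \| B_k f \| ^2 = \| \Theta _k f \| ^2 = \| f \| ^2$, so $A_k$ is a contraction, and symmetrically so is $B_k$. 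Hence $A_k (R), B_k (R) \in [R^\infty _d ]_1 = \scr{R} _d$ are scalar right Schur-class multipliers, as claimed.

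I expect no serious obstacle here: the derivation of isometry directly from the wandering relation is immediate, and the remaining work is bookkeeping. The one point genuinely requiring care is the passage from ``bounded isometric intertwiner of the left shift'' to ``inner right multiplier acting as right multiplication by $\theta _k (Z)$''; this rests solely on the commutant characterization of $R^\infty _d \simeq H^\infty _R (\B ^d _\N )$ already in force, and once it is invoked the statement follows.
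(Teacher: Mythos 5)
Your proposal is correct and follows essentially the same route as the paper's proof: both use the wandering property of $\theta_k$ to show that the vectors $(L^\alpha \otimes I_2)\theta_k$ are orthonormal, so that $L^\alpha 1 \mapsto (L^\alpha \otimes I_2)\theta_k$ extends by continuity to an isometry intertwining $L$ with $L \otimes I_2$, hence an inner column right multiplier. Your extra bookkeeping (the explicit intertwining computation, the coordinate-functional identification of $A_k, B_k$, and the Schur-class bound) merely spells out details the paper leaves implicit.
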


\begin{proof}
This is easy to see: 
\ba
\ip{\Theta _k (R) L^\alpha 1 }{\Theta _k (R) L^\beta 1}_{F^2} & = & \ip{L^\alpha \otimes I_2 \theta _k }{L^\beta \otimes I_2 \theta _k} _{F^2 \oplus F^2} \nn \\
& = & \delta _{\alpha ,\beta}, \nn \ea since $\theta _k$ is a unit wandering vector. It follows that $M^R _{\theta _k}$ extends by continuity to an isometry of $F^2 _d$ into $G(T) \subseteq F^2 _d \otimes \C ^2$ which intertwines $L$ and $L\otimes I_2$. 
\end{proof} 
\begin{remark}
In general, since $\ran{\Theta _k (R) } \subseteq G(T)$, we always have that 
$$ A_k (R) \equiv 0 \quad \Rightarrow \quad B_k (R) \equiv 0, $$ and this happens if and only if $b_k =B_k (R) 1$, $a_k = A_k (R) 1$ are zero in $F^2 _d$. 
Since we assumed that each $\theta _k := \bbm a_k \\ b_k \ebm$ is a unit wandering vector, this cannot happen.
\end{remark}
Define $\Theta   (R) : F^2 _d \otimes \C ^N \rightarrow F^2 _d \otimes \C ^2$ by
\ba \Theta  (R) &:=& \bbm \left. \Theta _1 (R) \  \right|  & \Theta _2 (R) \left. \right|  & \cdots \ &  \left| \Theta _N (R)  \right. \ebm, \nn \\
& = & \bbm A(R) \\ B(R) \ebm,  \label{charfun} \ea where $A(R), B(R) : F^2 _d \otimes \C ^N \rightarrow F^2 _d$ are defined by
\be B(R) := \bbm B_1 (R), & B_2 (R), & \cdots \ebm, \quad \quad A(R) := \bbm A_1 (R), & A_2 (R), & \cdots \ebm, \label{ABdef} \ee and recall that $N \in \N \cup \{ \infty \}$ could be infinite. If $T$ is densely-defined note that $\ran{A(R)}$ must be dense in $F^2 _d$, and observe that 
\be G(T) = \ran{\Theta  (R)} = \bigoplus _{k=1} ^N \ran{\Theta _k (R) }. \label{graphdecomp} \ee Since each $\Theta _k (R)$ is an isometry and the $\Theta _k$ have mutually orthogonal ranges, it follows that $\Theta (R)$ is also an isometry, \emph{i.e.}, an inner right multiplier. 

If $T \sim \scr{N} _d  (R)$ acts as right multiplication by some $A(Z) ^{-1} B(Z)$ on its domain, then it is clearly injective. Conversely, if $T \sim R^\infty _d$, set 
$$ \Gamma _0 (T) := G(T) \cap (F^2 _d \oplus 0) = \{ x \oplus 0 \in G(T) | \ x \in \ker{T} \subseteq \dom{T} \}.$$ It follows that $T$ will be injective if and only if $\Gamma _0 (T) = \{ 0 \}$. 

\begin{lemma} \label{injLemma}
   If $T \sim R^\infty _d$ then it is injective.
\end{lemma}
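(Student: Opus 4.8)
The plan is to reformulate injectivity through the inner column multiplier $\Theta(R) = \bbm A(R)\\ B(R)\ebm$ whose range is $G(T)$, and then to identify the defect operator $(I+T^*T)^{-1}$ with $A(R)A(R)^*$, reducing the claim to a density statement for a free deBranges--Rovnyak space.

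First I would record the purely algebraic consequences of $\Theta(R)$ being an isometry with range the graph $G(T)$. Since $G(T)$ is genuinely the graph of a single-valued operator, $A(R)c=0$ forces $\Theta(R)c = 0\oplus B(R)c \in G(T)\cap(\{0\}\oplus F^2_d)$, hence $B(R)c=0$ and then $c=0$; thus $A(R)$ is automatically injective. Dually, $x\in\ker{T}$ means $x\oplus 0 = \Theta(R)c$ for some $c$, i.e. $B(R)c=0$ and $x=A(R)c$. Because $A(R)$ is injective this shows $\Gamma_0(T)=\{0\}\iff \ker{B(R)}=\{0\}$, so that \emph{$T$ is injective precisely when $B(R)$ is}. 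I would also recall that densely-definedness makes $\ran{A(R)}=\dom{T}$ dense, i.e. $A(R)$ is outer.

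Next I would bring in the von Neumann machinery of Section~\ref{closedops}. Applying Theorem~\ref{vNthm} and Lemma~\ref{qalemma} to $T$, the first-coordinate quasi-affinity $X:G(T)\to F^2_d$ of Definition~\ref{qadef} factors through $\Theta(R)$: on $G(T)$ the adjoint $\Theta(R)^*$ inverts the isometry $\Theta(R)$, sending $h\oplus Th$ to the unique $c$ with $A(R)c=h$, whence $X = A(R)\,\Theta(R)^*$. As $\Theta(R)^*\Theta(R)=I$, this gives
\[ (I+T^*T)^{-1} = XX^* = A(R)\,A(R)^*. \]
Consequently $x\in\ker{T}$ iff $T^*Tx=0$ iff $(I+T^*T)^{-1}x=x$ iff $A(R)A(R)^*x=x$; that is, $\ker{T} = \ker{(I - A(R)A(R)^*)}$, the top eigenspace of the positive contraction $A(R)A(R)^*$. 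Since $I-A(R)A(R)^*\ge 0$ is self-adjoint, this eigenspace is exactly the orthogonal complement in $F^2_d$ of the closure of $\ran{(I-A(R)A(R)^*)^{1/2}}$, i.e. of the free right deBranges--Rovnyak space $\scr{H}^R(A)=\scr{M}\big(\sqrt{I-A(R)A(R)^*}\big)$. Hence \emph{$T$ is injective iff $\scr{H}^R(A)$ is dense in $F^2_d$}.

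Finally I would close by showing that the \emph{outer} multiplier $A(R)$ has dense complementary space, and this is the main obstacle. It is the free analogue of the classical computation that for outer $a$ with $\|a\|_\infty\le 1$ one has $\ip{(I-T_{|a|^2})c}{c} = \int_{\T}(1-|a|^2)|c|^2\,dm$, which forces $c$ to vanish on the positive-measure set $\{|a|<1\}$ and hence, by the F.\ and M.\ Riesz uniqueness theorem, to vanish identically unless $a$ is unimodular. In the free setting there are no literal boundary values to integrate against, so I would instead invoke the structure theory of free deBranges--Rovnyak spaces and column-extreme free multipliers from \cite{JMfreeNC} (the same machinery flagged in the introduction) to conclude that an outer $A(R)$ which is not a coisometry has dense $\scr{H}^R(A)$, equivalently that $A(R)A(R)^*$ has no eigenvalue $1$; one can phrase this cleanly via the $L$-invariance of $\ker{T}$, reducing to a single wandering vector fixed by $A(R)A(R)^*$. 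The excluded case $A(R)A(R)^*=I$ means $A(R)$ is a coisometry, hence (being injective) unitary, forcing $B(R)^*B(R)=I-A(R)^*A(R)=0$ and so $T=0$; thus the assertion should be read for $T\neq 0$, the zero operator being trivially non-injective. With that case set aside, $\ker{T}=\{0\}$ and $T$ is injective.
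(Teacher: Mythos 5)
Your first three paragraphs are correct, and they anticipate identities the paper itself establishes in Section \ref{rkanalysis}: $X^* = \Theta(R)A(R)^*$, hence $XX^* = (I+T^*T)^{-1} = A(R)A(R)^*$, so that $\ker{T} = \ker{I - A(R)A(R)^*}$ and $T$ is injective exactly when $\scr{H}^R(A) = \scr{M}\left(\sqrt{I - A(R)A(R)^*}\right)$ is dense in $F^2_d$. The gap is your final paragraph, which is where the entire content of the lemma now sits. The principle you propose to invoke --- that an outer, non-coisometric $A(R)$ has dense $\scr{H}^R(A)$, equivalently that $A(R)A(R)^*$ has no eigenvalue $1$ --- is false in the generality in which you need it. At this stage $A(R) : F^2_d \otimes \C^N \to F^2_d$ is a row with $N$ unknown; proving $N = 1$ is the paper's main difficulty (Proposition \ref{Linvprop2}), established \emph{after}, and independently of, Lemma \ref{injLemma}, and the classical boundary-integral computation you quote is only the $N=1$ case. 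Already for $d=1$, $N=2$, take $A = \tfrac12 (1+z, \ 1-z)$ acting on $H^2(\D) \otimes \C^2 = F^2_1 \otimes \C^2$: this row is a contractive multiplier (since $|a_1|^2 + |a_2|^2 \equiv 1$ on $\T$), it is outer (its range contains every polynomial), and it is not a coisometry ($M_A M_A^* 1 = \tfrac12$); nevertheless $M_A M_A^* z = z$, so $\ker{I - M_A M_A^*} \neq \{0\}$ and $\scr{H}^R(A)$ is not dense. What excludes this configuration in the lemma is precisely the structure you record in your first paragraph and then never use again: there $A(R)$ is injective and $\Theta(R) = \bpm A(R) \\ B(R) \epm$ has range equal to a graph, whereas in the example $A$ has a large kernel and the range of its inner completion $\Theta = \tfrac12 \bpm 1+z & 1-z \\ 1-z & 1+z \epm$ contains $0 \oplus zH^2(\D)$, so it is not a graph at all. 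The results of \cite{JMfreeNC} quoted in the paper (Lemma \ref{nonQElemma}, Lemma \ref{Linvlem}) concern $L$-invariance of $\scr{H}^R(B)$ and column extremity, not density of $\scr{H}^R(A)$; as the example shows, no such density statement can hold without the graph hypothesis. So after your (correct) reduction, the claim you still owe is essentially equivalent to the lemma itself, and no proof of it is offered.

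For comparison, the paper never leaves the graph: it decomposes $G(T) = \bigoplus_k \ran{\Theta_k(R)}$ by Popescu--Wold, shows by an orthogonality/triangle-inequality argument that each component $x_k \oplus Tx_k$ of a kernel vector $x \oplus 0$ already satisfies $Tx_k = 0$, and then combines injectivity of nonzero bounded right multipliers with irreducibility of $L$ to force every component to vanish. Your closing remark about the degenerate case is correct and worth keeping: $T \equiv 0$ is closed, densely defined and affiliated but not injective, so the statement tacitly excludes it (the paper's dichotomy for the reducing subspace $\ran{A_k(R)}^\perp$ silently discards the branch $\ran{A_k(R)}^\perp = \{0\}$, which is exactly $T \equiv 0$). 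But flagging that edge case does not close the main gap above.
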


\begin{proof}
Suppose that $x \oplus 0 \in \Ga _0 (T)$. Then we can write 
$$ x \oplus 0 = \bigoplus x_k \oplus Tx_k, $$ where each $x_k \oplus Tx_k \in \ran{\Theta _k (R)}$. Fix a value of $j \in \{ 1 , ... , N \}$ and re-write this decomposition as:
$$ x \oplus 0 = (x_j \oplus Tx_j) - (y \oplus Ty), $$ with $x_j \oplus Tx_j \perp y \oplus Ty$, $x = x_j -y$, and $Tx_j = T y$.
Hence, 
\ba \| x_j - y \| ^2 & = &  \| x _j \| ^2 - 2 \re{\ip{x_j}{y}} + \| y \| ^2 \nn \\
& = & \| x _j \| ^2 +2 \re{\| Ty \| ^2} + \| y \| ^2 \nn \\
& \geq & 2 \| T y \| ^2 +  \| x_j - y \| ^2, \nn \ea  by the triangle inequality. This proves that $Ty=Tx_j =0$, so that in fact, $x_j \oplus Tx_j = A_j (R) h \oplus B_j (R) h \in \ran{\Theta _1 (R) }$ is such that $ B_j (R) h =0$. If $B_j \in R^\infty _d$ is not identically zero, then it is injective (any non-zero right multiplier is injective \cite[Theorem 1.7]{DP-inv}) and $h=0$ so that $x_j \oplus Tx_j = A_j (R) h \oplus B_j (R) h = 0$. It follows that $x = \bigoplus x_k \oplus 0$, where if $x_k \neq 0$ then $A_k$ is inner and $B_k \equiv 0$. However, it then follows that 
$$ \ran{A_k (R)} \perp \bigvee _{j \neq k} \ran{A_j (R) } \subset \dom{T}, $$ so that
$$\bigvee _{j \neq k} \ran{A_j (R) }, $$ is a dense $L-$invariant subspace of the co-invariant subspace $\ran{A_k (R)} ^\perp$. It follows that $\ran{A_k (R)} ^\perp$ is both $L-$invariant and co-invariant (and hence $L-$reducing). Since $L$ has no non-trivial reducing subspaces we conclude that $\ran{A_k (R)} = \{ 0 \}$ and $A_k (R) \equiv 0$. This contradicts the assumption that each $A_k \neq 0$ and we conclude that $T$ is injective.
\end{proof}

\subsection{A particular wandering vector}

As in Subsection \ref{closedops}, define the linear map $X : G(T) \rightarrow F^2 _d $ by
$$ X (F \oplus TF ) := F \in F^2 _d. $$ This $X$ is necessarily injective, $\ran{X} = \dom{T}$, and if we view $X$ as a map
into $\mc{D} _T := \dom{T} ^{-\| \cdot \|}$, then $X$ is a quasi-affinity. In this section we assume that $T \lesssim R^\infty _d$, so that $1 \notin \dom{T} ^\perp$, and, in particular, $X^*1 \neq 0$. (Some of the arguments in this subsection appear already in \cite{JM-H2Smirnov}, we reproduce them here for convenience.)

As is easily verified:
\begin{lemma}
$X (L \otimes I_2) | _{G(T) \otimes \C ^d} = L (X \otimes I_d).$
\end{lemma}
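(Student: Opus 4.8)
The plan is to verify this operator identity by a direct computation on the spanning elements of $G(T) \otimes \C^d$, using the two ingredients already available: the affiliation relation $T L_k x = L_k T x$ for $x \in \dom{T}$, and the defining formula $X(F \oplus TF) = F$. Since $X$ is contractive and both row operators $L$ and $L \otimes I_2$ are (row) isometries, all four composite maps are bounded, so it suffices to check the identity on elementary tensors $(F \oplus TF) \otimes e_k$ with $F \in \dom{T}$ and $e_k$ a basis vector of $\C^d$; every element of $G(T)$ is genuinely of the form $F \oplus TF$, so these span $G(T) \otimes \C^d$ and the identity then extends by linearity and continuity.

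First I would compute the right-hand side, which is the easier of the two. Applying $X \otimes I_d$ sends $(F \oplus TF) \otimes e_k$ to $F \otimes e_k$, and then the row operator $L$ collapses this to $L_k F$. Hence $L (X \otimes I_d)[(F \oplus TF) \otimes e_k] = L_k F$. For the left-hand side, I would first observe that restricted to the $e_k$-slot the operator $L \otimes I_2$ acts as $L_k \otimes I_2$, i.e. as $L_k$ on each of the two components of $F^2_d \oplus F^2_d$, giving $(L_k \otimes I_2)(F \oplus TF) = L_k F \oplus L_k TF$. Here the affiliation relation enters: since $L_k \dom{T} \subseteq \dom{T}$ and $T L_k = L_k T$ on $\dom{T}$, we may rewrite $L_k TF = T(L_k F)$, so that $(L_k \otimes I_2)(F \oplus TF) = L_k F \oplus T(L_k F) \in G(T)$. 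Applying $X$ then returns the first component $L_k F$, matching the right-hand side.

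The computation presents no genuine obstacle, which is why the statement is asserted as easily verified. The only point requiring care is the invariance implicit in the formula, namely that $L \otimes I_2$ actually carries $G(T) \otimes \C^d$ into $G(T)$, so that the outer application of $X$ on the left-hand side is meaningful. This is precisely the content of $T$ being affiliated to the right free shift and was already recorded above in the observation that $G(T)$ is invariant for $L \otimes I_2$; the rewriting $L_k TF = T(L_k F)$ is exactly the step that exhibits the image inside the graph.
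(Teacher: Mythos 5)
Your proof is correct and is precisely the direct verification the paper has in mind: the paper states this lemma with only the remark ``As is easily verified,'' and your computation on elementary tensors $(F \oplus TF) \otimes e_k$, using the affiliation relation $L_k TF = T(L_k F)$ and the definition of $X$, is exactly that verification. The one cosmetic remark is that continuity is not even needed, since $\C^d$ is finite-dimensional and the vectors $g \otimes e_k$, $g \in G(T)$, already span all of $G(T) \otimes \C^d$ algebraically.
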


\begin{lemma} \label{wandvector}
$X^* 1$ is wandering for $L \otimes I_2 | _{G(T) \otimes \C ^d}.$
\end{lemma}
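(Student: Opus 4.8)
The plan is to show that $X^*1$ is wandering for the row isometry $L \otimes I_2$ restricted to $G(T)$, which by definition means that the vectors $(L^\alpha \otimes I_2) X^*1$, as $\alpha$ ranges over $\F^d$, form an orthogonal set in $G(T)$. Equivalently, I must verify that for all $\alpha, \beta \in \F^d$,
\[
\ip{(L^\alpha \otimes I_2) X^*1}{(L^\beta \otimes I_2) X^*1} = \delta_{\alpha,\beta} \, \|X^*1\|^2.
\]
The natural strategy is to transfer the computation from $G(T)$ back to $F^2_d$ using the intertwining relation for $X$ established in the preceding lemma, namely $X(L \otimes I_2)|_{G(T) \otimes \C^d} = L(X \otimes I_d)$. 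Taking adjoints of this relation yields an intertwining for $X^*$ of the form $(L^* \otimes I_2) X^* = (X^* \otimes I_d) L^*$, or equivalently $(L^\alpha \otimes I_2) X^* \supseteq X^* L^\alpha$ on appropriate domains (as $X^*$ maps into $G(T) \subseteq F^2_d \otimes \C^2$). The key point is that $X^*$ intertwines the left shift on $F^2_d$ with $L \otimes I_2$ on $G(T)$.

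First I would establish the adjoint intertwining relation carefully: from $X L_k = L_k X$ (componentwise, with the appropriate tensor identifications) one deduces $L_k^* X^* = X^* L_k^*$, and hence by iteration $(L^\alpha)^* X^* = X^* (L^\alpha)^*$ and $(L^\alpha \otimes I_2) X^* \supseteq X^* L^\alpha$. Then I would compute, for words $\alpha, \beta$,
\[
\ip{(L^\alpha \otimes I_2) X^*1}{(L^\beta \otimes I_2) X^*1}
= \ip{X^* L^\alpha 1}{X^* L^\beta 1}
= \ip{X X^* L^\alpha 1}{L^\beta 1}.
\]
The crucial ingredient is the identification of $XX^*$. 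By Lemma~\ref{qalemma} and Remark~\ref{Toepdom}, $X = Y^*$ where $Y = X^*$, and one computes $XX^* = \Delta_T = (I + T^*T)^{-1}$ when $T$ is densely defined; but here $T$ need only satisfy $T \lesssim R^\infty_d$. The essential structural fact I expect to exploit is that $XX^*$ commutes with the left shift $L$: since $X$ intertwines $L$ with $L \otimes I_2$, the positive operator $XX^* = X(X^*)$ satisfies $L XX^* = XX^* L$ on the relevant domain, so $XX^*$ is a (possibly unbounded, but here bounded) operator in the commutant of the left shift, i.e. affiliated to the right multiplier algebra.

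The main obstacle, and the heart of the argument, is converting the commutation of $XX^*$ with $L$ into the orthogonality statement. Because $XX^*$ commutes with each $L_k$, it acts as a right multiplication operator, and in particular it is a \emph{Toeplitz-like} operator for which $\ip{XX^* L^\alpha 1}{L^\beta 1}$ depends only on the difference structure of $\alpha$ and $\beta$; more precisely, writing $\Phi := XX^*$, the commutation forces $\ip{\Phi L^\alpha 1}{L^\beta 1} = \ip{L^{\gamma}{}^* \Phi L^{\gamma'} 1}{1}$-type reductions that collapse the off-diagonal terms. The cleanest route is: since $\Phi$ commutes with the row isometry $L$ and $1$ spans the wandering space $\mathcal{W}(L) = F^2_d \ominus L(F^2_d \otimes \C^d)$, one shows $L^*_k \Phi 1 = \Phi L^*_k 1 = 0$ so that $\Phi 1 \in \mathcal{W}(L) = \C \cdot 1$, and more generally the commutation gives $\ip{\Phi L^\alpha 1}{L^\beta 1} = 0$ whenever $\alpha \neq \beta$ by peeling off matching initial or terminal letters and using $L^* L = I$. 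This reduces the inner product to $\delta_{\alpha,\beta} \ip{\Phi 1}{1} = \delta_{\alpha,\beta} \|X^*1\|^2$, which is exactly the wandering condition. I would need to check that the condition $1 \notin \dom{T}^\perp$ (i.e. $T \lesssim R^\infty_d$) guarantees $X^*1 \neq 0$, so that $X^*1$ is a genuine nonzero wandering vector, as noted just before the lemma statement.
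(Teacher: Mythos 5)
There is a genuine gap, and it sits at the first step of your computation. Your reduction $\ip{(L^\alpha \otimes I_2)X^*1}{(L^\beta \otimes I_2)X^*1} = \ip{X^*L^\alpha 1}{X^*L^\beta 1}$ rests on the forward intertwining $(L^\alpha \otimes I_2)X^* = X^*L^\alpha$, equivalently on your later claim that $XX^* = (I+T^*T)^{-1}$ commutes with each $L_k$. Neither follows from taking adjoints of $X(L\otimes I_2)|_{G(T)\otimes \C^d} = L(X\otimes I_d)$: since $(L_k\otimes I_2)|_{G(T)}$ is a restriction to an invariant subspace, its adjoint is a \emph{compression}, and adjoining the known relation yields only $P_{G(T)}(L_k^*\otimes I_2)X^* = X^*L_k^*$, which cannot be "re-adjointed" into the forward relation for $X^*$. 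Unwinding what $X^*L_k = (L_k\otimes I_2)X^*$ would mean (recall $X^*f = (I+T^*T)^{-1}f \oplus T(I+T^*T)^{-1}f$), it says exactly that $L_k$ maps $\dom{T^*T}$ into itself, which requires $L_kTg \in \dom{T^*}$ for $g \in \dom{T^*T}$, i.e. $L_k$-invariance of (the relevant part of) $\dom{T^*}$. But that invariance is precisely the main difficulty of the whole paper: it is established only in Proposition \ref{Linvprop}, Proposition \ref{Linvprop2} and Corollary \ref{main2}, which use Lemma \ref{wandvector} as an ingredient, so your route is circular. Worse, your structural claim is false outright except in degenerate cases: $XX^*$ is self-adjoint, so if it commuted with every $L_k$ it would also commute with every $L_k^*$, hence lie in the commutant of the irreducible family $\{L_1,\dots,L_d,L_1^*,\dots,L_d^*\}$, forcing $XX^* = cI$; for densely defined $T$ this makes $T^*T$ a scalar multiple of the identity, so $T$ would be a bounded multiple of an isometry.

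The paper's proof never moves a shift past $X^*$, which is why it works at this early stage. For incomparable words the inner product vanishes because $L^\alpha$ and $L^\beta$ have orthogonal ranges. For $\alpha = \beta\gamma$, one first peels off $L^\beta\otimes I_2$ using only that it is an isometry, reducing to $\ip{(L^\gamma\otimes I_2)X^*1}{X^*1}$; then the adjoint is used in the harmless direction, moving $X$ rather than $X^*$: since $(L^\gamma\otimes I_2)X^*1 \in G(T)$, this equals $\ip{X(L^\gamma\otimes I_2)X^*1}{1} = \ip{L^\gamma XX^*1}{1}$ by the forward intertwining for $X$, and this is $\delta_{\gamma,\emptyset}\|X^*1\|^2$ simply because $\ran{L^\gamma} \perp 1$ when $\gamma \neq \emptyset$. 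Your "peeling" instinct is the right one, but it must be implemented in this order — isometry first, then the forward intertwining for $X$ — after which no commutation property of $X^*$ or of $XX^*$ is needed. (Your final remark, that $T \lesssim R^\infty_d$ guarantees $X^*1 \neq 0$, is correct and matches the paper.)
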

\begin{proof}
    Consider
$$ \ip{ L^\alpha \otimes I_2 \ X^*1}{ L^\beta \otimes I_2 \ X^* 1}. $$ This vanishes if $\beta, \alpha$ are not comparable,
so assume without loss of generality that $\alpha = \beta \ga$. Then this evaluates to:
$$ \ip{ X L^\ga \otimes I _2 X ^* 1}{1} = \ip{L^\ga XX^* 1}{1} = \delta _{\ga , \emptyset} \| X^* 1 \| ^2. $$
\end{proof}
\begin{remark} \label{orthoX}
The orthogonal complement of  $ X^* 1$ is:
\be (X^* 1 ) ^{\perp} = \{ L \mbf{F} \oplus T L \mbf{F} | \ L \mbf{F} \in \dom {T} \} \label{wandperp}. \ee
On the other hand,
$$ \ran{L \otimes I_2 | _{G(T)}} = \mathcal W(T)^\bot\subseteq \{X^*1\}^\bot.$$


By Definition \ref{qadef} and Lemma \ref{qalemma},
$$ X^*1 = \Delta _T ^{-1} 1 \oplus T \Delta _T ^{-1} 1. $$
It follows that
$$ \| X ^* 1 \| ^2 = \ip{X^*1}{X^*1} = \ip{X ( \Delta _T ^{-1} 1  \oplus T \Delta _T ^{-1} 1 ) }{1} = \ip{ \Delta _T ^{-1} 1}{1},$$
and \be w _1   := \frac{X^*1 }{\| X^* 1 \| } = \frac{X^*1}{\sqrt{\ip{1}{\Delta _T ^{-1} 1}}}, \label{thewv} \ee is a unit norm wandering vector for $L \otimes I_2 | _{G(T)}$. For the remainder of the paper we choose $\theta _1 = w_1$ as the first unit wandering vector in a choice of orthonormal basis for the wandering space $\mc{W} (T)$. 
\end{remark}

\begin{cor} \label{Smircor}
If $T \lesssim R^\infty _d$ then the following are equivalent:
\bn
    \item $T$ is local (that is, $\dom{T} \cap \ran{L}$ is $L^*$-invariant).
    \item $X^* 1$ spans the wandering space for $G(T)$.
\en
If $T \sim R^\infty _d$ then the above two conditions are equivalent to:
\bn
    \item[(3)] $T \sim \scr{N} _d ^+ (R)$.
\en
\end{cor}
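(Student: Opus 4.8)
The plan is to establish $(1)\Leftrightarrow(2)$ for every $T\lesssim R^\infty _d$, and then, under the density hypothesis $T\sim R^\infty _d$, to close the loop through $(2)\Rightarrow(3)\Rightarrow(2)$ (whence $(3)\Rightarrow(1)$ via the first part).

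For $(1)\Leftrightarrow(2)$ I would compute the wandering space $\mc{W}(T)=G(T)\ominus\ran{(L\otimes I_2)|_{G(T)}}$ directly. Using affiliation ($TL_k=L_kT$ on $\dom{T}$), one has $\ran{(L\otimes I_2)|_{G(T)}}=\{L\mbf{F}\oplus TL\mbf{F}\ :\ F_k\in\dom{T}\}$, while Remark \ref{orthoX} together with the identity $\ip{F\oplus TF}{X^*1}=\ip{F}{1}$ gives $\{X^*1\}^\perp\cap G(T)=\{F\oplus TF\ :\ F\in\dom{T},\ F\perp 1\}=\{F\oplus TF\ :\ F\in\dom{T}\cap\ran{L}\}$. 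The inclusion $\ran{(L\otimes I_2)|_{G(T)}}\subseteq\{X^*1\}^\perp\cap G(T)$ is automatic (any $L\mbf{F}\perp 1$), and $(2)$ is exactly the assertion that the two sides coincide. Since $F\in\ran{L}$ forces $F=\sum_k L_kL_k^*F$ with $L_k^*L_j=\delta_{kj}I$, the element $F\oplus TF$ lies in $\ran{(L\otimes I_2)|_{G(T)}}$ if and only if each $L_k^*F\in\dom{T}$; ranging over all $F\in\dom{T}\cap\ran{L}$, this is precisely locality. Hence $(1)$, the equality of the two subspaces, and $\mc{W}(T)=\C\,X^*1$ are all equivalent, which is $(1)\Leftrightarrow(2)$.

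For $(2)\Rightarrow(3)$: if $X^*1$ spans $\mc{W}(T)$ then $N=1$ and $G(T)=\ran{\Theta(R)}$ with $\Theta(R)=\bbm A(R)\\ B(R)\ebm$ a scalar inner column multiplier. Density of $\dom{T}=\ran{A(R)}$ shows $A(R)$ is outer, so by Lemma \ref{freeouter1} $A(Z)$ is invertible at every $Z\in\B ^d_\N$. Writing a typical domain element as $F=A(R)h$, so that $TF=B(R)h$, and using the pointwise invertibility of $A(Z)$ to eliminate $h$ at the level of free functions, one obtains $(TF)(Z)=F(Z)A(Z)^{-1}B(Z)$ with $A(Z)^{-1}B(Z)\in\scr{N}_d^+(R)$; this is $(3)$.

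For $(3)\Rightarrow(2)$ I would pass to the maximal-domain operator $\hat T:=M^R_{T(Z)}$ on $\dom{\hat T}=\{F\ :\ F(Z)T(Z)\in H^2(\B ^d_\N)\}$, where $T(Z)=a(Z)^{-1}b(Z)$. Because the left shift commutes with right multiplication, $\hat T$ is local by Lemma \ref{BabySmirnov}, and it is densely defined, closed, affiliated, and extends $T$; by the already-proved $(1)\Rightarrow(2)$ its graph is cyclic for $L\otimes I_2$. The heart of the matter is to show $G(T)$ is itself cyclic: for any pure row isometry $\Pi$ with wandering projection $P_{\mc{W}}$ one has $P_{\mc{W}}\Pi^\alpha v=0$ whenever $|\alpha|\geq 1$, so any cyclic invariant subspace $\bigvee_\alpha\Pi^\alpha v$ satisfies $\mc{W}=\C\,P_{\mc{W}}v$ and therefore has a one-dimensional wandering space. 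Applied to the column $v=a\oplus b$ built from the Smirnov representation, the identity $G(T)=\bigvee_\alpha(L\otimes I_2)^\alpha (a\oplus b)$ would immediately give $(2)$, and then $(1)$. The expected obstacle is exactly this cyclicity: one must verify that $\ran{M^R_a}$ is an $L$-invariant core sitting inside $G(T)$ (equivalently, that the given domain of $T$ is maximal, so that $G(T)=G(\hat T)$). This is the precise point at which one-dimensionality of the wandering space must be forced, and where I anticipate invoking the free Smirnov core property and, as in the paper's main theorem, the free deBranges--Rovnyak and column-extreme multiplier results rather than arguing by hand.
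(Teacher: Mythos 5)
Your proof of $(1)\Leftrightarrow(2)$ is correct, and it is actually cleaner than the paper's. The paper proves $(2)\Rightarrow(1)$ by passing to the inner column $\Theta(R)=\bbm A(R)\\ B(R)\ebm$ whose range is $G(T)$, and then using $T\lesssim R^\infty_d \Rightarrow A_\emptyset=A(0)\neq 0$ in a Fourier-coefficient argument to pull a preimage back into $\ran{L}$. Your argument avoids $\Theta$ entirely: the identification of $\ran{(L\otimes I_2)|_{G(T)}}$ as $\left\{F\oplus TF \ : \ F\in \dom{T}\cap \ran{L}, \ L_k^*F\in\dom{T}\ \forall k\right\}$ sitting inside $\{X^*1\}^\perp\cap G(T)=\{F\oplus TF : F\in\dom{T}\cap\ran{L}\}$ dispatches both directions at once, using only affiliation, the identity $\ip{F\oplus TF}{X^*1}=\ip{F}{1}$, the closedness of the range of the row isometry $(L\otimes I_2)|_{G(T)}$, and $X^*1\neq 0$ (which is where $T\lesssim R^\infty_d$ enters). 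Your $(2)\Rightarrow(3)$ is then exactly the paper's argument: one wandering vector gives $G(T)=\ran{\Theta(R)}$, density of $\dom{T}=\ran{A(R)}$ makes $A$ outer, Lemma \ref{freeouter1} gives pointwise invertibility, and eliminating $h$ pointwise exhibits $T$ as right multiplication by a Smirnov function.

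The gap is in $(3)\Rightarrow(2)$, which you do not complete. The paper disposes of this direction in one line: a right-Smirnov operator acts as right multiplication by a free holomorphic function, and by Lemma \ref{BabySmirnov} such a multiplication operator (on the domain on which it is defined as such, cf.\ Corollary \ref{corelem}) is \emph{local}; that is, $(3)\Rightarrow(1)$ is immediate, and the already-proved $(1)\Rightarrow(2)$ finishes. You instead reduce the problem to proving that $\dom{T}$ is maximal, equivalently that $G(T)=G(\hat T)$ or that $G(T)$ is $(L\otimes I_2)$-cyclic, and then explicitly defer that step to the free deBranges--Rovnyak and column-extreme results. That is not a proof, and it is also architecturally backwards: the entire point of Corollary \ref{Smircor} in the paper is that it requires none of that machinery --- Proposition \ref{Linvprop2}, where those tools genuinely are needed, is the much harder statement that affiliation \emph{alone} (no Smirnov hypothesis) forces the wandering space to be one-dimensional, and it is proved \emph{after} and partly by means of this corollary.

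Note also that the obstacle you raise only exists if condition $(3)$ is read as permitting $T$ to act as Smirnov multiplication on a non-maximal domain. Under the paper's intended reading --- $T=M^R_{T(Z)}$ on its natural domain, which is precisely the setting of Lemma \ref{BabySmirnov} --- your own setup closes instantly, since then $T=\hat T$ is local and your first part applies; the cyclicity question never arises. (Under the permissive reading the paper's one-line citation has the same lacuna as your argument, and the statement would only be settled a posteriori by the main theorem; so that reading cannot be the intended one.) The missing idea, concretely, is that locality of Smirnov multiplication operators is already available from Lemma \ref{BabySmirnov}, so $(3)$ should be routed through $(1)$, not through cyclicity of $G(T)$.
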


\begin{proof}
Suppose that $F \oplus TF \perp  X^* 1$. As observed above in Remark \ref{orthoX}, this means that $F = L \mbf{F}$ for some $\mbf{F} \in F^2 _d \otimes \C ^d$.
Since $F \in \ran{L}$, the assumption that $T$ is local would imply that $\mbf{F} = L^* F \in \dom{T} \otimes \C ^d$.
However,
\ba F \oplus TF & = & L\mbf{F} \oplus T L \mbf{F} \nn \\
& = & (L \oplus L ) \left( \mbf{F} \oplus (T\otimes I_d ) \mbf{F} \right) \in \ran{ L \otimes I_2 | _{G(T)}} = \mc{W} (T) ^\perp. \nn \ea It follows that the wandering space, $\mc{W} (T)$, is spanned by $X^*1$. 

Conversely if $X^*1$ spans the wandering space of $G(T)$ then 
$$ G(T) = \ran{\bbm A(R) \\ B(R) \ebm }, $$ where $A, B \in \scr{R} _d$ are defined by $X^*1 / \| X^* 1 \| =: A(R) 1\oplus B(R) 1$ and $\dom{T} = \ran{A(R)}$. $T$ must be local in this case: If $F \in \ran{L} \cap \dom{T}$, then there is a $\mbf{F} \in F^2 _d \otimes \C ^d$ so that
$$ F \oplus TF = L \mbf{F} \oplus T L \mbf{F} = A(R) x \oplus B(R) x. $$ Since $T \lesssim R^\infty _d$, $1$ does not annihilate $\dom{T}$, and this is equivalent to $1 \notin \ker{A(R) ^*}$, \emph{i.e.}, $A_\emptyset = A(0) \neq 0$. This, and the fact that $L \mbf{F} = A(R) x \in \ran{L}$ implies that $x = L \mbf{x} \in \ran{L}$ for some $\mbf{x} = (\mbf{x} _1, ... , \mbf{x} _d ) \in F^2 _d \otimes \C ^d$. Then, 
$$ F \oplus TF = L (A(R) \otimes I_d) \mbf{x} \oplus L (B(R) \otimes I_d) \mbf{x}, $$ so that for each $1 \leq k \leq d$, 
$$ A(R) \mbf{x} _k \oplus B(R) \mbf{x} _k = (L_k ^* \otimes I_2) F \oplus TF \in G(T). $$ This proves that if $F \in \dom{T} \cap \ran{L}$, then $L_k ^* F \in \dom{T}$ for any $1\leq k \leq d$, \emph{i.e.}, $T$ is local.

If also $T \sim R^\infty _d$ is densely-defined, then $A(R)$ is necessarily right-outer, so that $A ^\dag (Z)$ is invertible on the NC unit ball by Lemma \ref{freeouter1}, and $T = M^R _{T(Z)}$ with $T(Z)= A^\dag (Z) ^{-1} B^\dag (Z) \in \scr{N} _d ^+ (R)$. Any such $T$ is necessarily local, by Lemma \ref{BabySmirnov}.
\end{proof}

\begin{lemma} \label{asmult}
    Given $T \sim R^\infty _d$, $T$ acts as right multiplication by a free holomorphic function, $T(Z)$, if and only if $\dom{T}$ contains an $L-$cyclic vector.
\end{lemma}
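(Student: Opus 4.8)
The plan is to show that, for $T\sim R^\infty_d$, the statement ``$T$ acts as right multiplication by a free holomorphic function'' is equivalent to the Popescu--Wold multiplicity $N=\dim(\mc{W}(T))$ of the pure row isometry $L_T=(L\otimes I_2)|_{G(T)}$ being equal to $1$, and then to prove the two equivalences ``$N=1$'' $\Leftrightarrow$ ``$\dom{T}$ contains an $L$-cyclic vector''. The two easy links come first. If $N=1$, then $G(T)=\ran{\Theta(R)}$ with $\Theta(R)=\bigl(\begin{smallmatrix}A_1(R)\\ B_1(R)\end{smallmatrix}\bigr)$ a single inner column; since $\dom{T}=\ran{A_1(R)}$ is dense, $A_1$ is outer, so by Lemma~\ref{freeouter1} $A_1^\dag(Z)$ is invertible on $\B^d_\N$ and $T=M^R_{A_1^\dag(Z)^{-1}B_1^\dag(Z)}\in\scr{N}_d^+(R)$ is a right multiplication, while the vector $g_0:=A_1(R)1\in\dom{T}$ is $L$-cyclic because $\bigvee_\alpha L^\alpha g_0=A_1(R)\bigvee_\alpha L^\alpha 1=\ran{A_1(R)}$ is dense. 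Conversely, if $T$ is a densely defined right multiplication, then $G(T)$ sits inside the (cyclic) graph of the maximal multiplication operator of Lemma~\ref{BabySmirnov}, and the same inner-factor analysis described below forces $N=1$.

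The substance is the implication ``$\dom{T}$ contains an $L$-cyclic vector $g$'' $\Rightarrow$ ``$N=1$''. I first set $v_0:=g\oplus Tg\in G(T)$ and form the singly generated $L_T$-invariant subspace $M_g:=\bigvee_\alpha(L^\alpha\otimes I_2)v_0$. As for any cyclically generated invariant subspace, $\mc{W}(M_g)$ is one-dimensional: if $w\in\mc{W}(M_g)$ then $w\perp(L^\alpha\otimes I_2)v_0$ for all $\alpha\neq\varnothing$, and $w\perp v_0$ as well (since $v_0-P_{\mc{W}(M_g)}v_0\in L_T(M_g\otimes\C^d)$), so $w\perp M_g\ni w$, leaving only multiples of $P_{\mc{W}(M_g)}v_0$. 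Thus it suffices to prove $M_g=G(T)$, since this gives $\mc{W}(T)=\mc{W}(M_g)$ and $N=1$. To analyze $M_g$ I use the quasi-affinity $X:G(T)\to F^2_d$, $X(F\oplus TF)=F$, which intertwines $L_T$ with $L$ and satisfies $X\Theta(R)=A(R)=[A_1(R),\dots,A_N(R)]$, an injective row of right multipliers with dense range $\dom{T}$. Writing $g=A(R)c$, $L$-cyclicity of $g$ says exactly that $A(R)$ carries the cyclic subspace $\langle c\rangle:=\bigvee_\alpha(L^\alpha\otimes I_N)c$ densely onto $F^2_d$, while the intertwining identity $(L^\alpha\otimes I_2)\Theta(R)=\Theta(R)(L^\alpha\otimes I_N)$ yields $M_g=\Theta(R)\langle c\rangle$. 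Since $\Theta(R)$ is an isometry, $M_g=G(T)$ is equivalent to $\langle c\rangle=F^2_d\otimes\C^N$.

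Because $\langle c\rangle$ is singly generated, the free Beurling theorem gives $\langle c\rangle=\Phi(R)F^2_d$ for an inner right multiplier $\Phi$, and the density of $A(R)\langle c\rangle$ becomes precisely the assertion that the scalar right multiplier $\eta(R):=A(R)\Phi(R)$ is outer. The main obstacle is to conclude from this that $\Phi$ must be onto (equivalently unitary), i.e. that an outer multiplier cannot absorb a nontrivial inner factor: this is the free analogue of the one-variable fact that a product (outer)$\cdot$(inner) is outer only when the inner factor is a unimodular constant. I expect this to be the hardest step, and I plan to establish it by combining the free Beurling theorem with the inner--outer and column-extreme multiplier results of \cite{JMfreeNC}, applied to the free deBranges--Rovnyak space $\scr{H}^R$ attached to $\Phi$, so that a nonzero $\scr{H}^R(\Phi)$ would obstruct the density of $\ran{\eta(R)}$. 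Granting this, $\Phi$ is unitary, $\langle c\rangle=F^2_d\otimes\C^N$, hence $M_g=G(T)$ and $N=1$, which by the first paragraph exhibits $T$ as a right multiplication and completes the equivalence.
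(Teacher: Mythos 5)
Your reformulation is fine as far as it goes: reducing the lemma to the statement that the Popescu--Wold multiplicity of $(L\otimes I_2)|_{G(T)}$ is one, the identity $M_g=\Theta(R)\langle c\rangle$, and the observation that $M_g=G(T)$ if and only if $\langle c\rangle=F^2_d\otimes\C^N$ are all correct. But there is a genuine gap at exactly the point you flag as the hardest step: the assertion that an inner column $\Phi$ with $\eta(R)=A(R)\Phi(R)$ outer must be onto is never proved --- what you offer is a plan, not an argument --- and this assertion is not a loose end but the entire content of the lemma. Moreover the plan is unlikely to succeed in the form described. The one-variable principle ``outer cannot absorb a nontrivial inner factor'' rests on commutativity: in $H^2(\D)$ one has $a\varphi H^2=\varphi a H^2\subseteq\varphi H^2$, so a nontrivial inner factor traps the range inside a proper invariant subspace. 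Right multipliers of $F^2_d$ do not commute with one another, so $A(R)\ran{\Phi(R)}$ is \emph{not} contained in $\ran{\Phi(R)}$, and a nonzero $\scr{H}^R(\Phi)=\ran{\Phi(R)}^\perp$ places no visible obstruction on the density of $A(R)\ran{\Phi(R)}$; this failure of commutativity is precisely the difficulty the paper identifies in its introduction. Note also that your key claim, as formulated, uses only that $A(R)$ is an injective row with dense range and that $\Phi(R)$ is inner; it never uses the second component $B$ of the inner column $\Theta$, i.e.\ the fact that $G(T)$ is a \emph{closed} graph --- yet that is the only source of control here (if $A(R)c_n$ converges, one knows nothing about $c_n$ unless $B(R)c_n$ is controlled as well). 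Once the graph structure is restored, your claim is essentially Corollary \ref{main1}, the main theorem of the paper, whose proof requires the Gleason-solution and column-extreme machinery of Propositions \ref{Linvprop} and \ref{Linvprop2}, not merely ``the free Beurling theorem combined with results of \cite{JMfreeNC}''. In short, you have reduced the lemma to an unproved statement at least as hard as the theorem the whole paper is devoted to.

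For comparison, the paper's proof avoids wandering-space multiplicity entirely and is short: by \cite[Theorem 1.4]{JM-H2Smirnov} the cyclic vector $h\in\dom{T}$ can be written $h(Z)=u(Z)v(Z)^{-1}$ with $u,v$ bounded free multipliers and $v$ outer; $L$-cyclicity of $h$ forces $u$ to be outer as well, so $u(Z)$, $v(Z)$, and hence $h(Z)$ are invertible at every point of $\B^d_\N$ by Lemma \ref{freeouter1}; one then defines $T(Z):=h(Z)^{-1}(Th)(Z)$ and checks, using only affiliation, that $(TL^\alpha h)(Z)=(L^\alpha h)(Z)T(Z)$, which identifies $T$ with right multiplication by $T(Z)$ on the dense subspace $\bigvee_\alpha L^\alpha h$. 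Your forward direction is also broken as stated: an invariant subspace of a cyclic invariant subspace need not be cyclic when $d\geq 2$ (for instance $F^2_d\ominus\C 1\subseteq F^2_d$ has $d$-dimensional wandering space), so ``$G(T)$ sits inside the cyclic graph of the maximal multiplication operator'' does not by itself force $N=1$; and the ``inner-factor analysis described below'' cannot be invoked there, since it presupposes a cyclic vector in $\dom{T}$, which is exactly what that direction must produce. The paper instead handles this direction through locality, via Lemma \ref{BabySmirnov} and Corollary \ref{Smircor}.
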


\begin{proof}
If $T$ acts as right multiplication by $T(Z)$, then $T$ is local by Lemma \ref{babySmirnov}, so that $X^*1$ spans the wandering space of $G(T)$ by Corollary \ref{Smircor}. It is clear then that $XX^*1$ is $L$-cyclic since $T$ is assumed to be densely-defined.

Conversely if $\dom{T}$ contains an $L-$cyclic vector, $h \in F^2 _d$, then, by \cite[Theorem 1.4]{JM-H2Smirnov}, there exist $u,v\in F^\infty_d$, with $v$ outer, so that $h(Z)=u(Z)v(Z)^{-1}$.

 Since $h$ is assumed to be $L$-cyclic, $u$ must also outer, and hence $u(Z)$ and $v(Z)$ are both invertible for all $Z \in \B _\N ^d$ by Lemma \ref{freeouter1}.  It follows that $h(Z)$ is invertible for any $Z \in \B ^d _\N$. Define the free holomorphic function:
$$ T(Z) := h(Z) ^{-1} (Th) (Z). $$ Then,
$$ (TL^\alpha h) (Z) = Z^\alpha (Th) (Z), $$ and
$$ (L^\alpha h)(Z) T(Z) = Z^\alpha h(Z) h(Z) ^{-1} (Th)(Z) = Z^\alpha (Th) (Z) = (T L^\alpha h) (Z), $$ and this proves the claim since $h$ is $L-$cyclic.
\end{proof}

\subsection{Free reproducing kernel analysis} \label{rkanalysis}

In this subsection, let $T:\dom{T} \subseteq F^2 _d \rightarrow F^2 _d$ be any closed linear transformation affiliated to the right free shift. As before, $\mc{D} _T := \dom{T} ^{-\| \cdot \| } \subseteq F^2 _d$, and
$$ G(T) = \ran{\Theta (R)} \subseteq F^2 _d \otimes \C ^2, $$ is a $L \otimes I_2 -$invariant subspace. Also as before, we decompose $G(T)$ into the cyclic $L\otimes I_2$-invariant subspaces
$$ \ran{\Theta (R)} = \bigoplus \ran{\Theta _k (R)}, \quad \quad \Theta _k = \bbm A_k \\ B_k \ebm. $$ 

The graph of the adjoint, $T^*$, is then $G(T^* ) = \ran{J \Theta (T) } ^\perp,$ where $J : \mc{D} _T \oplus F^2 _d \rightarrow F^2 _d \oplus \mc{D} _T$ is the unitary from Subsection \ref{closedops}. In particular,
$$ J \Theta = \bbm -B \\ A \ebm  \in \scr{R} _d (\H , \C ^2 ),  \quad \mbox{is also inner.} $$ 
In the above, recall that we chose $\H = \C ^N$ with $N \in \N \cup \{ \infty \}$ the number of unit wandering vectors in an orthonormal basis for the wandering space of $G(T)$.

Consider the four linear maps, $X, Y : G(T) \rightarrow F^2 _d$, $X_*, Y_* : G(T^*) \rightarrow F^2 _d$, where $X,X_*$ are projection onto the first component and $Y,Y_*$ are projection onto the second component. It follows that
\begin{align*} \ran{X} &= \dom{T}, \ \ran{Y} = \ran{T}, \quad \mbox{and}, \\
\ran{X_*} &= \dom{T^*}, \ \ran{Y_*} = \ran{T^*}. \end{align*} Note that $X, X_*$ are injective, and if $T\sim R^\infty _d$, $Y$ is also injective by Lemma \ref{injLemma}, but $Y_*$ need not be injective. 

Define the four operator-range spaces: $\mc{D} := \scr{M} (X), \ \mc{R} := \scr{M} (Y)$,  \mbox{and} \nn $\mc{D} _* := \scr{M} (X_*)$, and $\mc{R} _* := \scr{M} (Y_* )$. Recall that, $\mc{D} = \ran{X} = \dom{T}$, and similarly $\mc{R} = \ran{T}$, $\mc{D} _* = \dom{T^*}$ and $\mc{R} _* = \ran{T^*}$, as vector spaces. However the operator-range space inner product is defined so that each of $X, Y, X_*, Y_*$ are co-isometries onto their range spaces, see Subsection \ref{freedBRspace}. We will write $U, V , U_*, V_*$ for the respective isometries from $\mc{D}, \mc{R}, \mc{D} _*$ and $\mc{R} _*$ into $G(T)$ and $G(T^*)$. This means, for example, that $U  ^*$ acts as $X$, projection onto the first component, and similarly the adjoints of the other three isometries are projection onto the first or second component. Since $X$ and $X_*$ are injective, it follows that $U,  U_*$ are onto isometries while the range of $V$ and $V_*$ are $G(T) \ominus \Ga _0 (T)$ and  $G(T^*) \ominus \Ga _0 (T^*)$, respectively. Here, $$ \Ga _0 (T) := (F^2 _d \oplus 0 ) \cap G(T), \quad \mbox{and} \quad  \Ga _0 (T^*) := (F^2 _d \oplus 0 ) \cap G(T^*), $$ with orthogonal projections $I-P_0$ and $I-Q_0$, respectively. 
 
All four spaces $\mc{D}, \mc{R}, \mc{D} _*, \mc{R} _*$ are contractively contained in $F^2 _d = H^2 (\B _\N ^d )$. It follows that we can view all four spaces as NC-RKHS on $\B _\N ^d$. The following theorem shows that these NC-RKHS can be identified with the operator range spaces and free right deBranges-Rovnyak spaces of the operator-valued right multipliers $A, B \in \scr{R} _d (\C ^N , \C )$ (see Subsection \ref{freedBRspace}).  

\begin{thm}
As NC-RKHS, we have that $\mc{D} = \scr{M} ^R (A), \mc{R} = \scr{M} ^R (B), \mc{D} _* = \scr{H} ^R (B)$, and $\mc{R} _* = \scr{H} ^R (A)$. The norms of any $x \in \mc{D}$ or $x_* \in \mc{D} _*$ are equal to the graph norms of $x \oplus Tx$ and $x_* \oplus T^* x_*$, respectively. The norms of any $Tx \in \mc{R}$ or $T^* x_* \in \mc{R} _*$ are the graph norms of $P_0 (x \oplus Tx)$ or $Q_0 (x_* \oplus T^* x_*)$ respectively. (If $T\sim R^\infty _d$ then $P_0 = I$.)
\end{thm}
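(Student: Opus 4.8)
The plan is to derive all four identifications from the single structural principle that an operator range space $\scr{M}(S)$ depends only on the positive operator $SS^*$: if $S_1 S_1^* = S_2 S_2^*$ then $\scr{M}(S_1) = \scr{M}(S_2)$ isometrically, and in particular their reproducing kernels agree (see \cite{FM2,Sarason-dB}). The two inputs I would assemble first are the projection formulas for the graphs. Since $\Theta(R) = M^R_\Theta$ is inner with $\ran{M^R_\Theta} = G(T)$, the orthogonal projection onto the graph is $P_{G(T)} = M^R_\Theta (M^R_\Theta)^*$, which in the block form coming from $\Theta = \bbm A \\ B \ebm$ reads
$$ M^R_\Theta (M^R_\Theta)^* = \bbm M^R_A (M^R_A)^* & M^R_A (M^R_B)^* \\ M^R_B (M^R_A)^* & M^R_B (M^R_B)^* \ebm. $$
Dually, because $J\Theta = \bbm -B \\ A \ebm$ is inner with $G(T^*) = \ran{M^R_{J\Theta}}^\perp$, one has $P_{G(T^*)} = I - M^R_{J\Theta}(M^R_{J\Theta})^*$, whose blocks are computed the same way. (Here $M^R_A, M^R_B$ are contractions, since $(M^R_A)^*M^R_A + (M^R_B)^*M^R_B = I$, so the complementary-space square roots below make sense.)

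Viewing $X, Y, X_*, Y_*$ as contractive maps into the ambient $F^2_d$, I would next compute the four positive operators $SS^*$. A one-line adjoint computation gives $X^* f = P_{G(T)}(f\oplus 0)$ and $Y^* f = P_{G(T)}(0\oplus f)$, and analogously for the starred maps with $P_{G(T^*)}$. Reading off the appropriate component using the block formulas above yields
$$ XX^* = M^R_A (M^R_A)^*, \qquad YY^* = M^R_B (M^R_B)^*, $$
$$ X_* X_*^* = I - M^R_B (M^R_B)^*, \qquad Y_* Y_*^* = I - M^R_A (M^R_A)^*. $$
Since $\scr{M}^R(A) = \scr{M}(M^R_A)$, $\scr{M}^R(B) = \scr{M}(M^R_B)$, $\scr{H}^R(B) = \scr{M}(\sqrt{I - M^R_B (M^R_B)^*})$, and $\scr{H}^R(A) = \scr{M}(\sqrt{I - M^R_A (M^R_A)^*})$, the structural principle converts these four identities directly into $\mc{D} = \scr{M}^R(A)$, $\mc{R} = \scr{M}^R(B)$, $\mc{D}_* = \scr{H}^R(B)$, and $\mc{R}_* = \scr{H}^R(A)$ as NC-RKHS (equal kernels forcing isometric equality).

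For the norm statements I would use that $X$ and $X_*$ are coisometries onto their range spaces by construction and are injective (their domains $G(T), G(T^*)$ are graphs), hence unitary; therefore the $\mc{D}$- and $\mc{D}_*$-norms of $x$ and $x_*$ equal the graph norms of $x\oplus Tx$ and $x_*\oplus T^*x_*$. The maps $Y, Y_*$ are coisometries with kernels $\Ga_0(T)$ and $\Ga_0(T^*)$, so they restrict to unitaries on $\Ga_0(T)^\perp = \ran{P_0}$ and $\Ga_0(T^*)^\perp = \ran{Q_0}$; as $Y(x\oplus Tx) = Tx$, this gives $\| Tx \|_{\mc{R}} = \| P_0 (x\oplus Tx) \|$ and likewise $\| T^*x_* \|_{\mc{R}_*} = \| Q_0 (x_*\oplus T^*x_*) \|$. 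When $T \sim R^\infty _d$, Lemma \ref{injLemma} gives $\Ga_0(T) = \{0\}$ and hence $P_0 = I$.

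I expect the only genuine obstacle to be the bookkeeping in the starred computation of $X_* X_*^*$ and $Y_* Y_*^*$: one must correctly use $J\Theta = \bbm -B \\ A \ebm$ together with the fact that $G(T^*)$ is the \emph{orthogonal complement} of its defining inner range (so that a minus sign and the complementary square root appear), and then invoke with care that $\scr{M}(S)$ is determined by $SS^*$. Once the projection formulas and that principle are in place, the four identifications and the norm equalities are immediate.
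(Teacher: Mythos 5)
Your proposal is correct and follows essentially the same route as the paper: the paper's computation of the $2\times 2$ block CPNC kernels of $G(T)=\scr{M}^R(\Theta)$ and $G(T^*)=\scr{H}^R(J\Theta)$, with the diagonal blocks recognized as the kernels of $\scr{M}^R(A), \scr{M}^R(B), \scr{H}^R(B), \scr{H}^R(A)$, is exactly your block computation of $P_{G(T)}=M^R_\Theta (M^R_\Theta)^*$ and $P_{G(T^*)}=I-M^R_{J\Theta}(M^R_{J\Theta})^*$ read against the ambient kernel, and your operator-range principle that $SS^*$ determines $\scr{M}(S)$ isometrically is the operator form of the paper's ``equal CPNC kernels give the same NC-RKHS.'' The norm statements are handled identically in both arguments: injectivity of $X, X_*$ makes $U, U_*$ unitary, $Y, Y_*$ are co-isometries with kernels $\Ga _0 (T), \Ga _0 (T^*)$, and Lemma \ref{injLemma} yields $P_0 = I$ when $T \sim R^\infty _d$.
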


\begin{proof}
We have that 
$$G(T) = \ran{\Theta  (R)} = \scr{M} ^R (\Theta ); \quad \quad G(T^*) = (JG(T) ) ^\perp = \scr{H} ^R (J\Theta ), $$ are both $\C ^2$-valued NC-RKHS which are isometrically contained in $F^2 _d \otimes \C ^2$ (since $\Theta$ and $J\Theta$ are both inner). The NC kernels for these spaces are then: 
$$ K^T (Z,W) = \bbm K(Z,W) [A^\dag (Z)(\cdot ) A^\dag (W)^*] & K(Z,W)[A^\dag (Z) (\cdot) B^\dag (W)^*] \\ K(Z,W)[B^\dag (Z) (\cdot) A^\dag (W)^*] & K(Z,W)[B^\dag (Z) (\cdot ) B^\dag (W) ^*] \ebm,$$ and $ K^{T^*} (Z,W) = $
$$ \bbm K(Z,W)- K(Z,W) [B^\dag (Z)(\cdot ) B^\dag (W)^*] & K(Z,W)[B^\dag (Z) (\cdot) A^\dag (W)^*] \\ K(Z,W)[A^\dag (Z) (\cdot) B^\dag(W)^*] & K(Z,W) - K(Z,W)[A^\dag (Z) (\cdot ) A^\dag (W) ^*], \ebm $$ where $K(Z,W)$, as always, is the NC Szeg\"{o} kernel of $H^2 (\B _\N ^d)$. We immediately recognize the diagonal components of these two kernels as the CPNC kernels for the range spaces and deBranges-Rovnyak spaces for $A,B$. 

Since $U ^*, V^*$ act as projection onto the first and second components of vectors in $G(T)$, and $U_* ^*, V_* ^*$ do the same for $G(T^*)$, it follows by RKHS theory that $U ^*, V^*, U_* ^* ,V_* ^*$ are co-isometric multipliers onto $\scr{M} ^R (A), \scr{M} ^R (B), \scr{H} ^R (B)$, and $\scr{H} ^R (A)$, respectively, and this proves that $\mc{D} = \scr{M} ^R (A), \mc{R} = \scr{M} ^R (B), \mc{D} _* = \scr{H} ^R (B)$, and $\mc{R} _* =\scr{H} ^R (A)$.

Moreover, since, as discussed above, $X, X_*$ are both injective, $U, U_*$ are onto isometries. This implies that the norm of any $x \in \mc{D} = \dom{T}$ or $x_* \in \mc{D} _*$ is simply equal to the graph norm of $x \oplus Tx \in G(T)$ or $x_* \oplus Tx_* \in G(T^*)$. The norm of any $Tx \in \mc{R}$ or $T^* x_* \in \mc{R} _*$ are equal to that of $P_0 (x \oplus Tx)$ and $Q_0 (x_* \oplus T^* x_*)$, where $P_0, Q_0$ project onto the ranges of $V,V_*$, which are $G(T) \ominus \Ga _0 (T)$ and $G(T^*) \ominus \Ga _0 (T^*)$, respectively.
\end{proof}
\begin{lemma}
    The quasi-affinity $X^* $ is equal to $\Theta (R) A(R) ^*$.
\end{lemma}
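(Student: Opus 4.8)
The plan is to verify the asserted identity directly from the structural descriptions of $X$ and $\Theta (R)$ already in hand, by pairing both sides against a generating set of $G(T)$. Recall from (\ref{graphdecomp}) and the discussion immediately following it that $\Theta (R)$ is an \emph{isometry} of $F^2 _d \otimes \C ^N$ \emph{onto} $G(T)$; hence every element of the graph has the form $\Theta (R) g = A(R) g \oplus B(R) g$ for a unique $g \in F^2 _d \otimes \C ^N$, and $\ip{\Theta (R) g}{\Theta (R) g'} _{G(T)} = \ip{g}{g'} _{F^2 _d \otimes \C ^N}$. Recall also that $X : G(T) \rightarrow F^2 _d$ is projection onto the first component, so that $X \Theta (R) g = A(R) g$. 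Since $A(R) ^* : F^2 _d \rightarrow F^2 _d \otimes \C ^N$ is bounded and $\Theta (R)$ takes values in $G(T)$, the composite $\Theta (R) A(R) ^*$ is a bounded map from $F^2 _d$ into $G(T)$, of exactly the type required of $X^*$.

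With these two facts in place, I would carry out the one-line adjoint computation. For $h \in F^2 _d$ and any $g \in F^2 _d \otimes \C ^N$,
\[
\ip{X^* h}{\Theta (R) g} _{G(T)} = \ip{h}{X \Theta (R) g} _{F^2 _d} = \ip{h}{A(R) g} _{F^2 _d} = \ip{A(R) ^* h}{g} _{F^2 _d \otimes \C ^N} = \ip{\Theta (R) A(R) ^* h}{\Theta (R) g} _{G(T)},
\]
where the final equality uses that $\Theta (R)$ is isometric onto $G(T)$. Since the vectors $\Theta (R) g$ exhaust $G(T)$ as $g$ ranges over $F^2 _d \otimes \C ^N$, the two elements $X^* h$ and $\Theta (R) A(R) ^* h$ of $G(T)$ have equal inner product against every vector of $G(T)$, and therefore $X^* h = \Theta (R) A(R) ^* h$. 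As $h$ is arbitrary, $X^* = \Theta (R) A(R) ^*$.

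There is no real obstacle here: the argument is a routine adjoint manipulation once the surjective-isometry property of $\Theta (R)$ is invoked. The only point demanding care is the bookkeeping of inner products, namely that $G(T)$ carries the graph inner product inherited from $F^2 _d \otimes \C ^2$, and it is precisely the fact (a consequence of the Popescu-Wold decomposition) that $\Theta (R)$ is a \emph{surjective} isometry onto $G(T)$ for that inner product which both supplies the generic test vector $\Theta (R) g$ and licenses cancelling the outer $\Theta (R)$'s in the last step. As a sanity check, the identity yields $X X^* = A(R) A(R) ^*$, whose range is $\dom{T^* T}$ in agreement with Remark \ref{Toepdom}; equivalently $A(R) A(R) ^* = (I + T^* T) ^{-1}$, the positive contraction $\Delta _T$ of Theorem \ref{vNthm}, consistent with the von Neumann formula $X^* h = \Delta _T h \oplus T \Delta _T h$.
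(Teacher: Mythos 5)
Your proof is correct, and it takes a genuinely different route from the paper's. You prove the identity by a pure duality argument: test $X^*h$ against the generating vectors $\Theta(R)g$ of $G(T)$, use the trivial identity $X\Theta(R) = A(R)$, and cancel the $\Theta(R)$'s via the fact (from the Popescu--Wold decomposition and (\ref{graphdecomp})) that $\Theta(R)$ is a surjective isometry from $F^2_d \otimes \C^N$ onto $G(T)$ in the graph inner product. The paper instead works through the reproducing-kernel machinery of the surrounding subsection: it first establishes the intermediate identity $XX^* = A(R)A(R)^*$ by computing the action of the embedding $E = XV$ of $\mc{D} = \scr{M}^R(A)$ on the kernel vectors $\kz$ (invoking the preceding theorem identifying $\mc{D}$ with the operator range space $\scr{M}^R(A)$ as NC-RKHS), and then lifts this to the graph by writing $X^* \kz = XX^*\kz \oplus T XX^* \kz = \Theta(R)A(R)^* \kz$ on the dense span of kernel vectors. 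Your argument is shorter and more elementary -- it needs no RKHS identification, no kernel vectors, and no density argument, only the adjoint relation and the isometry property -- and it recovers the paper's intermediate fact $XX^* = A(R)A(R)^*$ as an immediate corollary (as you note in your sanity check). What the paper's route buys is that it exhibits the identity concretely at the level of the NC Szeg\H{o} kernel, in keeping with the ``free reproducing kernel analysis'' theme of that subsection and the identifications $\mc{D} = \scr{M}^R(A)$, $\mc{D}_* = \scr{H}^R(B)$ used throughout the rest of the argument.
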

\begin{proof}
    Set $E:= X V$, where $V:\mc{D} \rightarrow G(T)$ is the canonical unitary. Then $E$ is the contractive embedding of $\mc{D}$ into $F^2 _d$.
For any $F \in \mc{D}$,
\ba \ip{K^\mc{D} \{ Z, y , v \} }{F} _\mc{D} & = & \ip{\kz }{F} _{F^2} \nn \\
& = & \ip{E^* \kz }{F} _{\mc{D}}, \nn \ea so that $E^* \kz = K^\mc{D} \{ Z,y,v \}$. Hence
\ba \ip{ \kz}{EE^* K \{ W,y',v'\}} & = & \ip{K^\mc{D} \{ Z,y, v\}}{K^\mc{D} \{ W,y',v' \}} _{\mc{D}} \nn \\
& = & \ip{\kz }{A(R) A(R) ^* K \{ W,y',v'\}} _{F^2}. \nn \ea This proves that $EE^* = XVV^* X^* = X X ^* = A(R) A(R) ^*$.
Also, if $X ^* F = G \oplus TG \in G(T)$ then $G = X X ^* F$, so that we obtain
\ba X^* \kz & = & \bbm  XX ^* \kz \\ T XX^* \kz \ebm \nn \\
& = & \bbm A(R) A(R) ^* \kz \\ B(R) A(R) ^* \kz \ebm  \nn \\
& = & \Theta (R) A(R) ^* \kz, \nn \ea proving the claim.
\end{proof}

\begin{remark}
As shown previously, $X^*F = \Delta _T ^{-1} F \oplus T \Delta _T ^{-1} F$ with $\Delta _T := (I+T^*T)$, and we conclude that
$\ran{XX^*} = \ran{\Delta _T ^{-1} } = \dom{\Delta _T} = \dom{T^* T}$. It follows that
$$ \dom{T^* T} = \ran{A(R) A(R) ^*} =\scr{M} (A(R) A(R) ^*)$$ and this range space is contractively contained in $\scr{M} ^R (A) = \dom{T}$ \cite[Chapter 16]{FM2}.
Alternatively,
\ba \dom{T^* T} & := & \{ F \in \dom{T} | \ TF \in \dom{T^* } \} \nn \\
& = & \{ F \in \scr{M} ^R (A) | \ TF \in \scr{M} ^R (B) \cap \scr{H} ^R (B) \}. \nn \ea
Also note that the overlapping space
$$ \ran{T} \cap \dom{T^*} = \scr{M} ^R (B) \cap \scr{H} ^R (B) = B \scr{H} ^R (B^*) = B \scr{M} ^R (A^*). $$ This follows from the general theory of operator range spaces \cite[Chapter 16]{FM2} (the last equality from the fact that $\Theta$ is inner).
\end{remark}

\subsection{Right affiliated is right Smirnov}

To prove that any $T \sim R^\infty _d$ acts as right multiplication by a right free Smirnov function, it remains to prove (by Corollary \ref{Smircor}) that the wandering space of $G(T)$ is spanned by the wandering vector $X^*1$. 

\begin{prop} \label{Linvprop}
Suppose that $T \lesssim R^\infty _d$ and $\dom{T^*}$ is $L_j$-invariant. Then the wandering space $\mc{W} (T)$ is spanned by $X^*1$.
\end{prop}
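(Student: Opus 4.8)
The plan is to prove the sharper statement that $\mc{W}(T)$ is one-dimensional; since $X^*1$ is a nonzero wandering vector (Lemma~\ref{wandvector}, together with $1\notin\dom{T}^\perp$), this is equivalent to $\mc{W}(T) = \C\, X^*1$. Fix the orthonormal basis $\{\theta_k\}_{k=1}^N$ of $\mc{W}(T)$ with first vector $\theta_1 = X^*1/\|X^*1\|$ (as already arranged in Remark~\ref{orthoX}), and write $\theta_k = a_k\oplus b_k$, where $a_k = A_k(R)1\in\dom{T}$, $b_k = B_k(R)1$, and $b_k$ is the graph image of $a_k$. I will assume $N\geq 2$ and derive a contradiction. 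The two structural facts I will lean on are that $G(T^*) = (JG(T))^\perp$ is \emph{co-invariant} for $L\otimes I_2$ (being the orthocomplement of the invariant subspace $JG(T)$), and that $\dom{T^*}$ is the projection of $G(T^*)$ onto the first coordinate.

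First I unwind membership in the wandering space. Since $\theta_k\perp(L_j\otimes I_2)(g\oplus Tg)$ for every $g\in\dom{T}$ and every $j$, expanding gives $\ip{L_j^* b_k}{Tg} = -\ip{L_j^* a_k}{g}$ on $\dom{T}$, i.e. $(L_j^* b_k)\oplus(-L_j^* a_k)\in G(T^*)$; in particular $L_j^* b_k\in\dom{T^*}$ for all $j$. This is the point at which the hypothesis is used: because $\dom{T^*}$ is $L_j$-invariant, $L_jL_j^* b_k\in\dom{T^*}$, and summing over $j$ yields
\[ \tilde b_k := b_k - \ip{1}{b_k}\,1 = \sum_j L_jL_j^* b_k\in\dom{T^*}. \]
Now I would apply co-invariance of $G(T^*)$ to a graph pair $\tilde b_k\oplus\zeta_k$. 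Since $L_j^*\tilde b_k = L_j^* b_k$ (the vacuum is annihilated by $L_j^*$), the vector $(L_j^* b_k)\oplus(L_j^*\zeta_k)$ lies in $G(T^*)$ and shares its first coordinate with $(L_j^* b_k)\oplus(-L_j^* a_k)$; comparing gives $L_j^*(\zeta_k+a_k)=0$ for all $j$, hence $\zeta_k+a_k\in\bigcap_j\ker{L_j^*}=\C\,1$. Thus
\[ T^*\tilde b_k = -a_k + c_k\,1\qquad\text{for some scalar } c_k. \]

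To finish I exploit the special choice $\theta_1\propto X^*1$. Using the identity $X^* = \Theta(R)A(R)^*$ and the fact that $A(R)^*1 = 1\otimes\overline{p}$ with $p_k := \ip{1}{a_k} = A_k(0)$, one gets $X^*1 = \sum_k\overline{p_k}\,\theta_k$, so orthonormality forces $p_k = 0$ for every $k\geq 2$. On the other hand, pairing the graph relation against $a_k$ — using $\ip{\zeta_k}{a_k} = \ip{\tilde b_k}{Ta_k}$, which is exactly the orthogonality of $G(T^*)$ to $JG(T)$ — together with $\|a_k\|^2+\|b_k\|^2 = 1$ gives
\[ \overline{c_k}\,p_k = 1 - |q_k|^2,\qquad q_k := \ip{1}{b_k}. \]
For $k\geq 2$ this reads $|q_k| = 1$, whence $\|b_k\| = 1 = \|\theta_k\|$ and therefore $a_k = 0$; but then $A_k\equiv 0$, which forces $B_k\equiv 0$ and $b_k = 0$, contradicting $\|b_k\|=1$. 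Hence $N=1$ and $\mc{W}(T) = \C\,X^*1$.

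I expect the middle step to be the main obstacle: passing from $\theta_k\in\mc{W}(T)$ to the explicit relation $T^*\tilde b_k = -a_k + c_k\,1$. Without the hypothesis one knows only that $L_j^* b_k\in\dom{T^*}$, not that $\tilde b_k$ itself lies there, and it is precisely the latter membership — supplied by $L_j$-invariance of $\dom{T^*}$ — that lets the co-invariance of $G(T^*)$ close the loop. All of this can be phrased intrinsically in the free deBranges--Rovnyak picture of Subsection~\ref{rkanalysis}, where $\dom{T^*} = \scr{H}^R(B)$, and the collapse $N=1$ is the manifestation of the column-extremality results of \cite{JMfreeNC}; the computation above is their concrete shadow. (The pairing identities are all consequences of graph orthogonality, so the argument goes through verbatim even if $T$ fails to be densely defined and $T^*$ is only a linear relation.)
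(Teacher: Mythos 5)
Your argument is correct in the setting where the paper's own proof is airtight (namely when $T^*$ is single-valued, \emph{i.e.} $T$ densely defined in $F^2_d$), and it takes a genuinely different route to the contradiction. The paper's proof takes an \emph{arbitrary} wandering vector orthogonal to $X^*1$, writes it as $L\mbf{F} \oplus TL\mbf{F}$ via Remark \ref{orthoX}, pairs it against $(L_k \otimes I_2)(G \oplus TL_k G)$ for $G \in \dom{T^*T}$ -- using the hypothesis at the same spot you do, to put $L_k TG$ into $\dom{T^*}$, together with the automatic commutation of $T^*$ with $L_k^*$ -- and arrives at $\ip{F_k}{(I+T^*T)G} = 0$; the finishing blow is von Neumann's Theorem \ref{vNthm}: $I + T^*T$ maps $\dom{T^*T}$ \emph{onto} $\mc{D}_T$, so each component $F_k$ dies. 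You never invoke Theorem \ref{vNthm} at all. Instead you work basis-vector by basis-vector in the Popescu--Wold wandering basis, extract the structural identity $T^*\bigl(b_k - q_k 1\bigr) = -a_k + c_k 1$ (wandering gives $T^* L_j^* b_k = -L_j^* a_k$; the hypothesis promotes this to $\tilde b_k \in \dom{T^*}$; co-invariance of $G(T^*)$, which is automatic from affiliation, closes the loop), and then finish with the scalar identity $\ov{c_k}\, p_k = 1 - |q_k|^2$ plus Cauchy--Schwarz, the normalization $\theta_1 \propto X^*1$ forcing $p_k = 0$ for $k \geq 2$. What your route buys: it is more elementary (graph orthogonality, the Wold basis, and a norm computation -- no $\Delta_T$, no surjectivity of $I + T^*T$), and the identity $\ov{c_k} p_k = 1 - |q_k|^2$ is a nice quantitative by-product. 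What the paper's route buys: it annihilates every wandering vector orthogonal to $X^*1$ in one stroke, and the only nontrivial external input is a classical theorem already set up in Subsection \ref{closedops}.

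The one claim you should retract is the final parenthetical, that the argument runs \emph{verbatim} when $T$ is not densely defined and $T^*$ is a linear relation. In that case the multi-valued part of $G(T^*)$, namely $\{ y \mid 0 \oplus y \in G(T^*)\}$, equals $\mc{D}_T^\perp \neq \{0\}$, so comparing second coordinates only yields $L_j^*(\zeta_k + a_k) \in \mc{D}_T^\perp$, whence $\zeta_k = -a_k + c_k 1 + \sum_j L_j w_j$ with $w_j \in \mc{D}_T^\perp$. Pairing with $a_k$ then produces the extra term $\sum_j \ip{w_j}{L_j^* a_k}$, which you cannot discard: $\mc{D}_T$ is $L_j$-invariant but need not be $L_j^*$-invariant, so $L_j^* a_k$ need not lie in $\mc{D}_T$. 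To be fair, the paper's own proof has the parallel soft spot -- it concludes $F_k \equiv 0$ from $F_k \perp \ran{I+T^*T} = \mc{D}_T$, which is only forced if $F_k \in \mc{D}_T$, again automatic only when $T$ is densely defined -- so your proof matches the strength of the paper's; just don't advertise the $T \lesssim R^\infty_d$ relation case as coming for free.
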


\begin{proof}
Suppose that $F \oplus TF \perp X^* 1$ is also a wandering vector for $(L \otimes I_2)$ restricted to $G(T) \otimes \C ^d$.
Then, as discussed in Remark \ref{orthoX}, $F \oplus TF = L \mbf{F} \oplus T L \mbf{F}$ (since it is orthogonal to $X^*1$), and also, we necessarily have that
$$ L \mbf{F} \oplus TL \mbf{F} \perp (L \otimes I_2) (G (T) \otimes \C ^d), $$ since $F \oplus TF$ is a wandering vector.

Hence, for any $G \in \dom{T^*T}$, and $1 \leq k \leq d$,
\begin{align*} 0 & =  \ip{ L \mbf{F} \oplus T L \mbf{F} }{L_k G \oplus T L_k G } && \\
& =  \ip{F_k}{G} + \ip{L\mbf{F}}{T ^* L_k T G } && (\dom{T^*} \ \mbox{is $L_k-$invariant})  \nn \\
& =  \ip{F_k}{G} + \ip{\mbf{F}}{L^* T^* L_k TG} && \\
& =  \ip{F_k}{G} + \ip{\mbf{F}}{(T^* \otimes I_d) L^* L_k T G } && (\dom{T^*} \ \mbox{is $L_k ^*-$invariant})  \\
& = \ip{F_k}{G} + \ip{F_k}{T^*TG} = \ip{F_k}{(I+T^*T) G}. && \end{align*}
This proves that $F_k \perp \ran{(I+T^*T)}$, but $(I+T^*T)$ is onto $\mc{D} _T = \dom{T} ^{-\| \cdot \|}$, by Theorem \ref{vNthm}, so that $F_k \equiv 0$.
Hence $X^*1$ spans the wandering subspace, and the wandering space is one-dimensional.
\end{proof}

It remains to determine, when, given $B \in \scr{R} _d (\H, \C )$, the right free deBranges-Rovnyak space $\scr{H} ^R (B)$ is $L-$invariant.  This property is closely related to the concept of a column-extreme (or quasi-extreme) Schur multiplier as introduced in the scalar, commutative setting for Drury-Arveson space in \cite{Jur2014AC} and studied in the operator-valued and free settings in \cite{JM-AC,JMqe,MMdBR,JMfree,JMfreeNC}. There are several equivalent definitions of column-extreme (CE) Schur multipliers. One can define $B \in \scr{R} _d (\H , \J)$ to be column-extreme if there is no non-zero $A \in R^\infty _d \otimes \L (\H, \J)$ so that 
$$ \bbm A \\ B \ebm \in \scr{R} _d (\H , \J \otimes \C ^2 ), $$ (see \cite{JMqe}, \cite[Section 6]{JMfreeNC}). Any column-extreme multiplier is necessarily an extreme point of the Schur class, and in the single-variable, scalar-valued setting, a Schur multiplier is extreme if and only if it is column-extreme. (It is unknown if this converse holds in general.)  The following lemmas are special cases of the results of \cite[Section 6]{JMfreeNC}:

\begin{lemma} \label{nonQElemma}
Given $B \in \scr{R} _d$, the following are equivalent:
\bn
    \item $B$ is non-CE.
    \item $B \in \scr{H} ^R (B)$.
    \item $\scr{H} ^R (B)$ is $L-$invariant.
\en
\end{lemma}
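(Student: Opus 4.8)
The plan is to prove the cycle (1) $\Rightarrow$ (2) $\Rightarrow$ (3) $\Rightarrow$ (1), treating this as the free analogue of the classical scalar equivalence ``$b$ non-extreme $\Leftrightarrow b \in \scr{H}(b) \Leftrightarrow \scr{H}(b)$ is shift-invariant.'' Throughout set $M := M^R _B$, a contraction on $F^2 _d$ commuting with each $L_k$, write $b := M 1 \in F^2 _d$ for the vector to which the free function $B$ corresponds, and recall from Subsection \ref{freedBRspace} that $\scr{H} ^R (B) = \scr{M}(\sqrt{I - M M^*})$ with its operator-range norm. Both defect operators $I - M M^*$ and $I - M^* M$ will appear, and the interplay between them---mediated by the fact that $M$ commutes with the free shift---is what makes the equivalence nontrivial.

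For (2) $\Leftrightarrow$ (3) I would establish a free Sarason-type difference-quotient identity on $\scr{H} ^R (B)$. Since $\scr{H} ^R (B)$ is a de Branges--Rovnyak space it is automatically $L^*$-invariant, with $L^*$ acting as a row contraction; the substantive point is the \emph{forward} action. Computing directly with the kernel $K^B (Z,W) = K(Z,W) - K(Z,W)[B^\dag (Z)(\cdot) B^\dag (W)^*]$, one shows that for $f = \sum_k L_k f_k$ the quantity $\|f\|^2 _{\scr{H} ^R (B)}$ differs from $\sum_k \|f_k\|^2 _{\scr{H} ^R (B)}$ by a single correction term that is a nonnegative multiple of the would-be squared norm of $b$ in $\scr{H} ^R (B)$. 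Hence $L_k f \in \scr{H} ^R (B)$ for all $f$ and all $k$ exactly when that correction term is finite, which is precisely the assertion $b \in \scr{H} ^R (B)$. This is a bounded-operator kernel computation with no essential obstruction.

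For (1) $\Rightarrow$ (2), suppose $B$ is non-CE, so there is a nonzero $A \in R^\infty _d$ with $\bbm A \\ B \ebm \in \scr{R} _d (\C , \C ^2)$. Contractivity of the column is the operator inequality $(M^R _A)^* M^R _A \leq I - M^* M$, and the positivity of the associated column de Branges--Rovnyak kernel rearranges to $K^B \geq K^{\scr{M} ^R (A)}$, i.e. $\scr{M} ^R (A) \hookrightarrow \scr{H} ^R (B)$ contractively. Translating the defect inequality through Douglas' majorization lemma recasts the membership $b \in \scr{H} ^R (B) = \scr{M}(\sqrt{I - MM^*})$ as a bound $|\ip{1}{M^* x}|^2 \leq C\, \ip{(I - MM^*)x}{x}$, which the factorization of $I - M^* M$ through $M^R _A$ supplies. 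The only delicate point is passing between the two defects $I - M^* M$ and $I - MM^*$; here the commutation of $M$ with $L$ and the inner--outer structure of $A$ close the gap.

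The main obstacle is the reverse implication (2) $\Rightarrow$ (1) (equivalently (3) $\Rightarrow$ (1)): from $b \in \scr{H} ^R (B)$, or from $L$-invariance of $\scr{H} ^R (B)$, one must manufacture an honest nonzero right multiplier $A \in R^\infty _d$ rather than merely a bounded operator factoring the defect. When $\scr{H} ^R (B)$ is $L$-invariant, $L|_{\scr{H} ^R (B)}$ is (by the closed graph theorem) a bounded row operator which one checks is a pure row contraction, and its wandering space $\scr{H} ^R (B) \ominus \bigoplus_k L_k \scr{H} ^R (B)$ carries the data of the sought-after mate. Realizing this wandering data as the range of an actual multiplier $M^R _A$---so that $\bbm A \\ B \ebm$ is genuinely a Schur multiplier with $A \neq 0$---is exactly the content of the free Beurling theorem and the column-extreme analysis of \cite[Section 6]{JMfreeNC}, which I would invoke to finish. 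This is the step where the single-variable argument has no direct analogue and the full force of the free NC machinery is needed.
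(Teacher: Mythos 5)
Your proposal should be measured against the fact that the paper does not prove this lemma at all: it is quoted verbatim as a special case of \cite[Section 6]{JMfreeNC}, and the substance behind it is the contractive Gleason solution calculus (the defect operator $b_\emptyset ^2 = I - B(0)^*B(0) - \mbf{B}^*\mbf{B}$ with $\mbf{B} = L^*B$) that this paper only deploys later, in Proposition \ref{Linvprop2}. Against that backdrop, your attempt has a genuine gap exactly where it tries to do independent work, namely in (1) $\Rightarrow$ (2). Your reduction of $B \in \scr{H}^R(B)$ to the Douglas-type bound $|\ip{B}{x}|^2 \leq C\, \ip{(I - M M^*)x}{x}$ is correct, and so is the kernel-domination containment $\scr{M}^R(A) \subseteq \scr{H}^R(B)$; but the latter does not finish anything, since there is no reason $B$ should lie in $\ran{M^R_A}$, and the passage from the domain-side inequality $(M^R_A)^*M^R_A \leq I - M^*M$ to the range-side bound above is not a ``delicate point'' that can be closed by ``the inner--outer structure of $A$'': non-CE furnishes only a \emph{nonzero} $A$, with no inner or outer structure whatsoever, and this passage \emph{is} the implication. (Classically it is precisely where one constructs an outer Pythagorean mate from log-integrability; in the free setting it is where the Gleason-solution arguments of \cite{JMfreeNC} enter.)

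The remaining steps fare no better. Calling (2) $\Leftrightarrow$ (3) ``a bounded-operator kernel computation with no essential obstruction'' overreaches: the direction (2) $\Rightarrow$ (3) is exactly \cite[Corollary 6.10]{JMfreeNC}, which this paper quotes separately as Lemma \ref{Linvlem} precisely because it is not routine, and its proof runs through uniqueness and norm identities for contractive Gleason solutions. Moreover the identity you posit is not right as stated: the natural route is the free analogue of Sarason's formula expressing $L_k f$ as the image of $f$ under the adjoint of $L_k^*|_{\scr{H}^R(B)}$ plus a scalar multiple of $B$, and expanding that sum in the $\scr{H}^R(B)$ norm produces cross terms, so the defect $\|L_kf\|^2_{\scr{H}^R(B)} - \|f\|^2_{\scr{H}^R(B)}$ is not a clean nonnegative multiple of the would-be norm of $B$; in any case, establishing that formula is tantamount to the Gleason-solution theory you are trying to bypass. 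Finally, you dispose of (3) $\Rightarrow$ (1) by invoking ``the column-extreme analysis of \cite[Section 6]{JMfreeNC}'' --- the very source from which the lemma is taken, and the direction for which that source does all of its work --- so the proposal is circular exactly at the hard step. A self-contained proof would have to reconstruct the dichotomy that $B$ is non-CE if and only if $1 - |B_\emptyset|^2 - \|L^*B\|^2_{\scr{H}^R(B)\otimes \C^d} > 0$, which is the actual route taken in \cite{JMfreeNC}.
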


\begin{lemma}{ (\cite[Corollary 6.10]{JMfreeNC})} \label{Linvlem}
    If $B \in \scr{R} _d (\H , \C)$ is such that $Bh \in \scr{H} ^R (B)$ for every $h \in \H$, then 
$\scr{H} ^R (B)$ is $L-$invariant.
\end{lemma}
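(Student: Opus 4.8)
The plan is to work entirely with the operator-range description of the complementary space together with the Douglas/Smul'jan range criterion, avoiding the reproducing kernel. Write $M := M^R_B : F^2 _d \otimes \H \to F^2 _d$ for the contractive right-multiplication operator, so that by definition $\scr{H} ^R (B) = \scr{H} (M) = \scr{M}\big( (I - MM^*)^{1/2}\big)$. The single tool I would lean on is the standard characterization of operator-range membership (cf. \cite{FM2}): a vector $f \in F^2 _d$ lies in $\scr{H} ^R (B)$ with $\|f\|^2 _{\scr{H} ^R (B)} \le c$ if and only if $|\ip{f}{u}|^2 \le c\,(\|u\|^2 - \|M^* u\|^2)$ for every $u \in F^2 _d$, the least admissible $c$ being exactly $\|f\|^2 _{\scr{H} ^R (B)}$. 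Thus to prove $L$-invariance it suffices to produce, for each $f \in \scr{H} ^R (B)$ and each $1 \le k \le d$, an inequality of this shape for $L_k f$ with a constant independent of $f$.

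First I would record the commutation relation: since $B$ is a right multiplier, $M$ intertwines the left shifts, $L_j M = M(L_j \otimes I_\H)$, and hence $M^* L_k ^* = (L_k ^* \otimes I_\H) M^*$. Applying the range criterion to $f$ itself, but with the test vector $L_k ^* u$ in place of $u$, gives
\[ |\ip{L_k f}{u}|^2 = |\ip{f}{L_k ^* u}|^2 \le \|f\|^2 _{\scr{H} ^R (B)}\big( \|L_k ^* u\|^2 - \|M^* L_k ^* u\|^2 \big). \]
Using the intertwining relation, the parenthesized quantity equals $\ip{\Phi_k u}{u}$, where $\Phi_k := L_k L_k ^* - M(L_k L_k ^* \otimes I_\H) M^*$. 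The whole problem therefore collapses to the single operator inequality
\[ \Phi_k \le C\,(I - MM^*) \qquad (1 \le k \le d) \]
for a $k$-independent constant $C$; substituting this into the previous display shows $L_k f \in \scr{H} ^R (B)$ with $\|L_k f\|^2 _{\scr{H} ^R (B)} \le C\, \|f\|^2 _{\scr{H} ^R (B)}$, which is exactly $L$-invariance.

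To establish the operator inequality I would first observe that each $\Phi_k$ is positive semidefinite: because $M^*$ is a contraction and $M^* L_k ^* = (L_k ^* \otimes I_\H) M^*$, we have $\|M^* L_k ^* u\| \le \|L_k ^* u\|$, i.e. $\ip{\Phi_k u}{u} \ge 0$. Since the summands are positive, it is enough to bound the sum, which is computable: from the row-isometry identity $\sum_k L_k L_k ^* = I - P_\emptyset$, where $P_\emptyset$ is the orthogonal projection onto the vacuum subspace $\C 1$, one finds
\[ \sum_{k=1}^d \Phi_k = (I - MM^*) - P_\emptyset + M(P_\emptyset \otimes I_\H) M^*. \]
The residual term is exactly $\mbf{b}\,\mbf{b}^*$, where $\mbf{b} : \H \to F^2 _d$ is the bounded ``column'' operator $\mbf{b}h := M(1 \otimes h) = Bh$. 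This is where the hypothesis enters: the assumption $Bh \in \scr{H} ^R (B)$ for every $h$ says precisely that $\ran{\mbf{b}} \subseteq \scr{H} ^R (B) = \ran{(I - MM^*)^{1/2}}$, so by Douglas's lemma $\mbf{b} = (I - MM^*)^{1/2} G$ for some bounded $G$, whence $\mbf{b}\,\mbf{b}^* \le \|G\|^2\,(I - MM^*)$. Dropping the negative term $-P_\emptyset$ yields $\sum_k \Phi_k \le (1 + \|G\|^2)(I - MM^*)$, and positivity of each summand then gives $\Phi_k \le (1 + \|G\|^2)(I - MM^*)$ for every $k$, which is the sought inequality with $C = 1 + \|G\|^2$.

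The main obstacle is conceptual rather than computational. The individual operators $\Phi_k$ resist a direct estimate, but their sum telescopes through $\sum_k L_k L_k ^* = I - P_\emptyset$, and it is the positivity of each summand---itself a consequence of the contractivity of $M^*$ together with the intertwining relation---that converts a bound on the sum into a bound on each term. The hypothesis is used at precisely one point, to tame the residual term $\mbf{b}\,\mbf{b}^*$ via Douglas's lemma; without it that term is a genuine obstruction, consistent with the fact that for inner (column-extreme) $B$ the space $\scr{H} ^R (B)$ is \emph{not} $L$-invariant. A minor point to check is that $\mbf{b}$ is bounded as a map into $F^2 _d$ (so that Douglas applies and yields a uniform constant), which is immediate since $B$ is a bounded multiplier.
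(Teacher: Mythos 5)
Your proof is correct, and it is necessarily a different route from the paper's, because the paper gives no proof of this lemma at all: it is imported verbatim from \cite[Corollary 6.10]{JMfreeNC}, where it sits inside the theory of contractive Gleason solutions and column-extreme multipliers (the same machinery the paper invokes again in the proof of Proposition \ref{Linvprop2}). Your argument, by contrast, is self-contained and uses only four standard ingredients: the operator-range membership criterion for $\scr{M}\bigl((I-MM^*)^{1/2}\bigr)$, the intertwining $L_k M = M(L_k \otimes I_\H)$, the identity $\sum_{k=1}^d L_k L_k^* = I - P_\emptyset$, and Douglas's lemma \cite{DouglasLemma}. Each step checks out: the substitution $u \mapsto L_k^* u$ in the range criterion correctly reduces $L$-invariance to the operator inequality $\Phi_k \le C(I - MM^*)$; the positivity of each $\Phi_k$, the telescoping of the sum, the identification of the residual term as $\mbf{b}\,\mbf{b}^*$ with $\mbf{b}h = M(1 \otimes h) = Bh$, and the Douglas factorization $\mbf{b} = (I-MM^*)^{1/2}G$ are all valid (and $\mbf{b}$ is indeed bounded since $M$ is a contraction, so Douglas applies even when $\dim{\H} = \infty$, which is the case needed in Proposition \ref{Linvprop2}). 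Two small remarks: applying the intertwining twice gives the cleaner formula $\Phi_k = L_k(I - MM^*)L_k^*$, which makes the positivity of each summand immediate; and your argument actually yields the quantitative conclusion $\|L_k f\|^2_{\scr{H}^R(B)} \le (1 + \|G\|^2)\,\|f\|^2_{\scr{H}^R(B)}$, i.e.\ the operators $L_k$ act boundedly on $\scr{H}^R(B)$, which is slightly more than the bare invariance statement the paper quotes. What the cited approach buys is integration into a broader structure theory (Gleason solutions, extreme points of the free Schur class) that the paper relies on elsewhere; what yours buys is an elementary, quantitative, and fully self-contained proof.
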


\begin{cor}
    If $T \sim R^\infty _d$ then $T \sim \scr{N} _d ^+  (R)$ if and only if $\dom{T^*}$ is $L-$invariant.
\end{cor}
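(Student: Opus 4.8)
The plan is to assemble this equivalence from the three results just established, using the reproducing-kernel identification $\dom{T^*} = \mc{D} _* = \scr{H} ^R (B)$ proved in Subsection \ref{rkanalysis}, where $B \in \scr{R} _d (\C ^N , \C)$ is the lower component of the inner column multiplier $\Theta (R)$ of (\ref{charfun}). Since $T \sim R^\infty _d$ is densely defined, $\dom{T} ^\perp = \{ 0 \}$, so \emph{a fortiori} $1 \notin \dom{T} ^\perp$; hence $T \lesssim R^\infty _d$ and both Corollary \ref{Smircor} and Proposition \ref{Linvprop} are available.

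For the forward implication I would assume $\dom{T^*}$ is $L$-invariant. Proposition \ref{Linvprop} then shows directly that the wandering space $\mc{W} (T)$ is spanned by $X^* 1$, which is exactly condition (2) of Corollary \ref{Smircor}. Because $T \sim R^\infty _d$, that corollary upgrades condition (2) to condition (3), namely $T \sim \scr{N} _d ^+ (R)$, which is what we want.

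For the converse I would assume $T \sim \scr{N} _d ^+ (R)$ and run Corollary \ref{Smircor} in reverse: condition (3) gives condition (2), so $\mc{W} (T)$ is one-dimensional and spanned by $X^* 1$. Thus $N = 1$ and the inner column multiplier of (\ref{charfun}) collapses to a single two-component inner right multiplier $\Theta (R) = \bbm A(R) \\ B(R) \ebm$ with scalar $A, B \in \scr{R} _d$. The key observation is that the numerator $A$ produced by the Smirnov representation $T(Z) = A ^\dag (Z) ^{-1} B ^\dag (Z)$ is outer, hence nonzero; the mere existence of this nonzero $A$ with $\bbm A \\ B \ebm \in \scr{R} _d (\C , \C ^2)$ a contractive (indeed inner) column is precisely a certificate that $B$ is \emph{not} column-extreme. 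Lemma \ref{nonQElemma} then delivers that $\scr{H} ^R (B) = \dom{T^*}$ is $L$-invariant.

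I expect no genuinely new obstacle here, since the substance resides in the cited results; the corollary is an assembly. The one point requiring care is the converse, where one must recognize that the outer numerator $A$ furnished by the Smirnov factorization is exactly the non-column-extremity witness demanded by Lemma \ref{nonQElemma}. I would also flag, for the forward direction, that the hypothesis ``$\dom{T^*}$ is $L_j$-invariant'' in Proposition \ref{Linvprop} is to be read as invariance under every component shift $L_j$ (equivalently $L$-invariance), matching the statement of the corollary; recall that $L^*$-invariance of $\dom{T^*}$ is automatic from affiliation, so the content of the hypothesis is genuinely the $L$-invariance being imposed.
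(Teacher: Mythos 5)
Your proof is correct and follows essentially the same route as the paper: Proposition \ref{Linvprop} together with Corollary \ref{Smircor} for the forward direction, and, for the converse, the observation that the nonzero outer $A$ in the inner column $\bbm A(R) \\ B(R) \ebm$ witnesses that $B$ is non-column-extreme, so Lemma \ref{nonQElemma} gives $L$-invariance of $\dom{T^*} = \scr{H}^R(B)$. One cosmetic slip: in $T(Z) = A^\dag(Z)^{-1}B^\dag(Z)$ the outer factor $A$ is the \emph{denominator}, not the numerator, but this does not affect your argument.
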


\begin{proof}
    If $T \sim R^\infty _d$ and $\dom{T^*}$ is $L-$invariant, Proposition \ref{Linvprop} proves that the wandering space of $G(T)$ is one-dimensional so that $T \sim \scr{N} _d ^+ (R)$ by Corollary \ref{Smircor}. Conversely if $T \sim \scr{N}  _d ^+ (R)$ then 
    $$ G(T) = \ran{\bbm A(R) \\ B(R) \ebm} \in \scr{R}_d (\C , \C ^2 ), $$ and $A(R) \neq 0$ so that $B(R)$ is necessarily non-CE and $\dom{T^*} = \scr{H} ^R (B)$. Lemma \ref{nonQElemma} then implies the claim.
\end{proof}

\begin{prop} \label{Linvprop2}
If $T \lesssim R^\infty _d$ with $\dom{T^*} = \scr{H} ^R (B)$ and $B(R) \in \scr{R} _d (\H, \C )$, then $\dim{\H} =1$ and $\mc{W} (T)$ is spanned by  $ X^*1$.
\end{prop}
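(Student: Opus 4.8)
The plan is to deduce the statement from Proposition \ref{Linvprop} by establishing that the domain $\dom{T^*} = \scr{H} ^R (B)$ is $L$-invariant. Recall from the free reproducing kernel analysis that $G(T) = \ran{\Theta (R)}$, where $\Theta = \bbm A \\ B \ebm \in \scr{R} _d (\H , \C ^2 )$ is an inner column multiplier, and that $\H = \C ^N$ with $N = \dim{\mc{W} (T)}$ equal to the number of unit wandering vectors. Since $T \lesssim R^\infty _d$ we have $X^* 1 \neq 0$; taking $\theta _1 = X^* 1 / \| X^* 1 \|$ as the first basis wandering vector and using the identity $X^* = \Theta (R) A(R) ^*$ together with $\theta _1 = \Theta (R) (1 \otimes e_1 )$, injectivity of the isometry $\Theta (R)$ forces $A(R) ^* 1 = \| X^* 1 \| \, (1 \otimes e_1)$. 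In particular $A(0) \neq 0$, so $A \not\equiv 0$.

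First I would record that $A \not\equiv 0$ supplies a nonzero complementary multiplier for $B$: the column $\bbm A \\ B \ebm$ is inner, hence Schur, so $B$ is \emph{not} column-extreme. Then, appealing to the operator-valued form of Lemma \ref{nonQElemma} from \cite[Section 6]{JMfreeNC}, non-column-extremality of $B \in \scr{R} _d (\H , \C)$ is equivalent to the assertion that every column $Bh$, $h \in \H$, belongs to $\scr{H} ^R (B)$. Having the columns in $\scr{H} ^R (B)$, Lemma \ref{Linvlem} immediately gives that $\scr{H} ^R (B) = \dom{T^*}$ is $L$-invariant.

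With $\dom{T^*}$ now known to be $L$-invariant, Proposition \ref{Linvprop} applies and shows that $\mc{W} (T)$ is spanned by $X^* 1$ (the auxiliary $L_j ^*$-invariance of $\dom{T^*}$ used in that proof is automatic from the affiliation $TL \subseteq L(T \otimes I_d)$). Hence $\dim{\mc{W} (T)} = 1$, and since $\dim{\H} = N = \dim{\mc{W} (T)}$ we conclude $\dim{\H} = 1$.

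The crux is the middle step, producing the columns $Bh \in \scr{H} ^R (B)$. Note that the relation $(Ah) \oplus (Bh) = \Theta (R) (1 \otimes h) \in G(T)$ shows $Bh = T(Ah)$ with $Ah \in \dom{T}$, so by von Neumann's theorem $Bh \in \scr{H} ^R (B) = \dom{T^*}$ is equivalent to $Ah \in \dom{T^* T} = \ran{A(R) A(R) ^*}$, and hence to having $1 \otimes h$ lie in $\ran{A(R) ^*}$ modulo $\ker{A(R)}$. For $h = e_1$ this is immediate from $A(R) ^* 1 = \| X^* 1 \| (1 \otimes e_1)$, but for the remaining basis directions it is exactly where the scalar argument of Lemma \ref{nonQElemma} breaks down when $N \geq 2$; establishing it is the main obstacle, and it is precisely the content of the column-extreme theory of \cite{JMfreeNC} that is invoked above.
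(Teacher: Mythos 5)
Your overall strategy (establish $L$-invariance of $\dom{T^*} = \scr{H} ^R (B)$, then invoke Proposition \ref{Linvprop}) matches the paper's endgame, but the step you lean on is a genuine gap: the claimed ``operator-valued form of Lemma \ref{nonQElemma},'' asserting that non-column-extremality of $B \in \scr{R} _d (\H , \C)$ is \emph{equivalent} to $Bh \in \scr{H} ^R (B)$ for every $h \in \H$, is false, so it cannot be what \cite{JMfreeNC} provides. Non-CE only asserts the existence of \emph{one} nonzero complementary multiplier $A$, and this is compatible with some columns of $B$ being inner. Concretely, take $d=1$, $\H = \C ^2$, $B = \bbm b, & 0 \ebm$ with $b$ inner; then $A = \bbm 0, & 1 \ebm$ makes $\bbm A \\ B \ebm$ contractive, so $B$ is non-CE, yet $M^R _B (M^R _B)^* = M^R _b (M^R _b)^*$ gives $\scr{H} ^R (B) = \ran{M^R _b} ^\perp$, and the column $Be_1 = b$ does \emph{not} lie in it. In your setting the correct substitute for ``$A \neq 0$'' is the stronger fact that $A \mbf{c} \neq 0$ for \emph{every} unit vector $\mbf{c}$ (this does hold: if $A\mbf{c} = 0$ then, since $G(T)$ is a graph, $B\mbf{c} = T(A\mbf{c}) = 0$, contradicting that $\Theta (R)$ is an isometry), whence each scalar column $B\mbf{c}$ is non-CE. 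But even this does not formally yield $B\mbf{c} \in \scr{H} ^R (B)$: the scalar Lemma \ref{nonQElemma} gives only $B\mbf{c} \in \scr{H} ^R (B\mbf{c})$, and the operator-range inclusion goes the wrong way, namely $\scr{H} ^R (B) \subseteq \scr{H} ^R (B\mbf{c})$, since $M^R _{B\mbf{c}} (M^R _{B\mbf{c}})^* \leq M^R _B (M^R _B)^*$.

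Bridging precisely this gap is the substance of the paper's proof, which you have in effect deferred to a nonexistent lemma. The paper argues by contradiction: assuming a second orthonormal wandering vector $\theta _2$, it restricts to the two-column subgraph $G(T') = \ran{\Theta ' (R)}$, $\Theta ' = \bbm A' \\ B' \ebm \in \scr{R} _d (\C ^2 , \C ^2)$, and brings in the contractive Gleason solution $\mbf{B} ' = L^* B'$. Non-CE of each scalar column $B'\mbf{c}$ forces, via \cite[Theorem 6.3]{JMfreeNC}, the \emph{strict} inequality $1 - |B'\mbf{c}(0)|^2 - \| \mbf{B} ' \mbf{c} \| ^2 > 0$ for every unit $\mbf{c} \in \C ^2$, hence invertibility of the $2 \times 2$ matrix $b_\emptyset ^2 = I - B'(0)^* B'(0) - (\mbf{B} ')^* \mbf{B} '$; only then do \cite[Claim 6.7, Proposition 6.8]{JMfreeNC} deliver $B' \C ^2 \subset \scr{H} ^R (B')$, so that Lemma \ref{Linvlem} and Proposition \ref{Linvprop} apply to $T'$ and force $\theta _2 = 0$. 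Note that the reduction to a two-column subgraph is not cosmetic: invertibility of $b_\emptyset$ is deduced from strict positivity of its quadratic form on unit vectors, which suffices only in finite dimensions, whereas your argument works with all of $\H = \C ^N$ at once, including the case $N = \infty$ where that deduction fails.
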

To prove this proposition we will employ several results from \cite[Section 6]{JMfreeNC}.
\begin{proof}
Recall, by definition, that $X^* 1 \neq 0$ is wandering for $G(T)$, and we choose $\Theta _1 (R) := M^R _{\theta _1}$ with $\theta _1 := X^*1 / \| X^* 1 \|$ as the first unit vector in a wandering basis for the wandering space $\mc{W} (T)$ of $G(T)$. Suppose that $\theta _2$ is a second unit wandering vector orthogonal to $\theta _1$ and define 
$$ \Theta ' (R) := \bpm \Theta _1 (R), & \Theta _2 (R) \epm =: \bpm A' (R) \\ B' (R) \epm, $$ 
so that $\ran{\Theta ' (R) } \subseteq \ran{\Theta (R)} = G(T)$. We will prove that $\theta _2 \equiv 0$, and this contradiction implies the claim. Define $T' : \dom{T'} \rightarrow F^2 _d$ by $G(T') := \ran{\Theta ' (R)}$. By construction, $T' \lesssim R^\infty _d$, and by the previous subsection, $\dom{(T') ^*} = \scr{H} ^R (B')$.

Set $\mbf{B} ' := L^* B '$, this is the contractive Gleason solution for $B'$, and it takes values in $\scr{H} ^R (B') \otimes \C ^d$ (see \cite[Section 5]{JMfreeNC}). Define the positive semi-definite operator $0 \leq b_\emptyset \in \L (\C ^2 )$ by 
$$ b_\emptyset ^2 := I - B'(0) ^* B'(0) - (\mbf{B}') ^* \mbf{B}'. $$ 
Let $\mbf{c}$ be any unit vector in $\C ^2$. Since $A' \mbf{c} \neq 0$, it follows that $B_\mbf{c} ' := B' \mbf{c} \in \scr{R} _d$ is non-CE. Also, by uniqueness of contractive Gleason solutions \cite[Theorem 5.5]{JMfreeNC}, $\mbf{B} '  _\mbf{c} = \mbf{B} ' \mbf{c} = L^* (B' \mbf{c})$, and since $\mbf{B} ' _\mbf{c}$ must be non-extremal by \cite[Theorem 6.3]{JMfreeNC}, 
\ba \left( \mbf{c}, b_\emptyset ^2 \mbf{c} \right) _{\C ^2} & = & \| \mbf{c} \| ^2 - \| B' (0) \mbf{c} \| ^2 - \| \mbf{B} ' \mbf{c} \| ^2 \nn \\ 
& = & 1 - | B' _\mbf{c} (0) ) | ^2 - \| \mbf{B} '  _\mbf{c} \| ^2 > 0. \nn \ea This proves that $b_\emptyset$ is injective and has dense range, and hence it is an invertible operator on $\C ^2$. Since $b_\emptyset$ is invertible \cite[Claim 6.7, Proposition 6.8]{JMfreeNC} imply that $B' \C ^2 \subset \scr{H} ^R (B')$ and $\scr{H} ^R (B') = \dom{(T')^*}$ is $L$-invariant by Lemma \ref{Linvlem}. In this case Proposition \ref{Linvprop} implies that $\theta _1 = X^*1$ spans the wandering space for $G(T')$ so that $\theta _2 \equiv 0$.
\end{proof}

\begin{cor} \label{main1}
Let $T : \dom{T} \subseteq F^2 _d \rightarrow F^2 _d$ be a closed linear transformation. $T$ is densely-defined and affiliated to the right free shift, $T \sim R^\infty _d$, if and only if $T$ acts as right multiplication by a holomorphic right free Smirnov function, $T \sim \scr{N} _d ^+ (R)$.
\end{cor}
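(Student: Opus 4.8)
The plan is to treat the two implications separately and to observe that all of the genuine content has already been assembled in the previous subsections, so that the corollary amounts to invoking the right earlier results in the right order.

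The converse is the easy direction and uses no new ideas. If $T \sim \scr{N} _d ^+ (R)$, then by Definition \ref{RSmirnovdef} the operator $T$ is closed, densely defined, and acts as right multiplication by a free holomorphic function $T(Z) = A(Z) ^{-1} B(Z)$ with $A$ right-outer. By Lemma \ref{BabySmirnov} (and the corollary following it), the maximal right-multiplication operator $M^R _{T(Z)}$ is closed, affiliated to the right free shift, and densely defined --- density coming from outerness of the denominator, since $\dom{T}$ contains $\ran{M^R _{A(Z)}}$. As $T$ coincides with this operator, we conclude $T \sim R^\infty _d$.

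For the forward direction, assume $T \sim R^\infty _d$. First I would record that density of $\dom{T}$ forces $\dom{T} ^\perp = \{ 0 \}$, so in particular $1 \notin \dom{T} ^\perp$ and hence $T \lesssim R^\infty _d$; this is exactly the hypothesis that licenses the wandering-vector and reproducing-kernel analysis of Subsection \ref{rkanalysis} and the subsections preceding it. By Corollary \ref{Smircor} it then suffices to show that the wandering space $\mc{W} (T)$ of $G(T)$ is spanned by the distinguished wandering vector $X^*1$, since the equivalence of conditions (2) and (3) there yields $T \sim \scr{N} _d ^+ (R)$. To produce this, I would use the identification of Subsection \ref{rkanalysis}: writing $G(T) = \ran{\Theta (R)}$ with $\Theta = \bbm A \\ B \ebm$ and $N := \dim{\mc{W} (T)}$, one has $\dom{T^*} = \scr{H} ^R (B)$ where $B = B(R) \in \scr{R} _d (\C ^N , \C )$. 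Proposition \ref{Linvprop2} now applies verbatim, giving $\dim{\C ^N} = 1$ together with the statement that $\mc{W} (T)$ is spanned by $X^*1$, which closes the forward direction.

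The main obstacle is therefore not in the corollary itself but is packaged entirely into Proposition \ref{Linvprop2}: excluding a second unit wandering vector $\theta _2 \perp X^*1$, equivalently the one-dimensionality of $\mc{W} (T)$ singled out as the crux in the introduction. This is where the column-extreme multiplier and contractive Gleason-solution theory of \cite{JMfreeNC} is indispensable: one forms the two-column inner multiplier $\Theta '$ generated by $X^*1$ and the hypothetical $\theta _2$, uses non-column-extremality along each column direction to force the defect operator $b_\emptyset$ to be invertible, deduces $B' \C ^2 \subseteq \scr{H} ^R (B')$ and hence $L$-invariance of $\dom{(T') ^*} = \scr{H} ^R (B')$ via Lemma \ref{Linvlem}, and finally invokes Proposition \ref{Linvprop} to collapse the wandering space and force $\theta _2 = 0$. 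Granting this one-dimensionality, the corollary follows formally from Corollary \ref{Smircor}.
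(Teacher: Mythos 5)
Your proposal is correct and takes essentially the same route as the paper: Corollary \ref{main1} is stated there without a separate proof precisely because it is the assembly you describe --- the forward direction from density (so $T \lesssim R^\infty _d$), the identification $\dom{T^*} = \scr{H} ^R (B)$ from Subsection \ref{rkanalysis}, Proposition \ref{Linvprop2}, and Corollary \ref{Smircor}, and the converse from Definition \ref{RSmirnovdef} together with Lemma \ref{BabySmirnov}. Your summary of the crux, Proposition \ref{Linvprop2} (collapsing the wandering space to the span of $X^*1$ via non-column-extremality, invertibility of the Gleason defect $b_\emptyset$, and Lemmas \ref{Linvlem} and \ref{Linvprop}), also matches the paper's argument.
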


\begin{cor} \label{main2}
If $T \lesssim R^\infty _d$ then 
\bn
    \item $\dom{T^*}$ is $L$-invariant.
    \item $L^\alpha \Delta _T ^{-1}1 \in \dom{T^*T}$ for every $\alpha \in \F ^d$.
    \item $G(T) = \ran{\Theta (R)}$ where $\Theta (R) 1 = X^* 1 = \Delta _T ^{-1} 1 \oplus T \Delta _T ^{-1} 1$ and $\Theta (R)$ is right-inner. Hence 
    $$ \Theta (R) = \bbm A (R) \\ B(R) \ebm \in \scr{R} _d (\C , \C ^2 ), $$ and $A,B$ have symbols $\Delta _T ^{-1} 1$ and $T \Delta _T ^{-1} 1$, respectively. If $T \sim R^\infty _d$ then $A$ is also outer.
\en
\end{cor}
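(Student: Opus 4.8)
The plan is to establish parts (3), (1), and (2) in that order, since part (3) is essentially a repackaging of Proposition~\ref{Linvprop2} together with the kernel analysis of Subsection~\ref{rkanalysis}, after which (1) and (2) follow by short arguments exploiting the inner structure of $\Theta(R)$. For part (3), I would start from the observation that for \emph{any} $T$ affiliated to the right free shift the theorem in Subsection~\ref{rkanalysis} gives $\dom{T^*} = \scr{H}^R(B)$ with $B = B(R) \in \scr{R}_d(\C^N,\C)$, where $N$ is the number of vectors in a wandering basis of $\mc{W}(T)$. Since $T \lesssim R^\infty_d$, the hypotheses of Proposition~\ref{Linvprop2} are met with $\H = \C^N$, so that proposition forces $N = \dim{\H} = 1$ and $\mc{W}(T) = \C\,X^*1$. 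Consequently $\Theta(R)$, built from the single unit wandering vector $\theta_1 = X^*1/\|X^*1\|$, is a two-component inner right multiplier in $\scr{R}_d(\C,\C^2)$, and $\Theta(R)1 = X^*1 = \Delta_T^{-1}1 \oplus T\Delta_T^{-1}1$ by Remark~\ref{orthoX}. Writing $\Theta(R) = \bbm A(R) \\ B(R) \ebm$, reading off components shows $A$ and $B$ have symbols $A(R)1 = \Delta_T^{-1}1$ and $B(R)1 = T\Delta_T^{-1}1$. If in addition $T \sim R^\infty_d$, then $\dom{T} = \ran{A(R)}$ is dense, whence $A$ is outer.

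For part (1), I would note that $\Theta(R) = \bbm A \\ B \ebm$ is inner with $A \neq 0$ (indeed $A(R)1 = \Delta_T^{-1}1 \neq 0$, as $\Delta_T^{-1}$ is positive and injective). The existence of such a nonzero $A$ making $\bbm A \\ B \ebm$ Schur means precisely that the scalar multiplier $B$ is non-column-extreme. Lemma~\ref{nonQElemma} then yields that $\scr{H}^R(B) = \dom{T^*}$ is $L$-invariant. This step is where the reduction to $\dim{\H}=1$ is essential, since Lemma~\ref{nonQElemma} is stated for scalar $B$.

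For part (2), the key point is that $\Theta(R)$, being a right multiplier, intertwines $L$ and $L \otimes I_2$, so for every $\alpha \in \F^d$
\[
(L^\alpha \otimes I_2)X^*1 = \Theta(R)L^\alpha 1 = L^\alpha\Delta_T^{-1}1 \oplus L^\alpha T\Delta_T^{-1}1 \in G(T).
\]
Reading off the graph, $L^\alpha\Delta_T^{-1}1 \in \dom{T}$ with $T(L^\alpha\Delta_T^{-1}1) = L^\alpha T\Delta_T^{-1}1$. Now $\Delta_T^{-1}1 = XX^*1 \in \ran{XX^*} = \dom{T^*T}$ by Remark~\ref{Toepdom}, and by the von Neumann characterization (Theorem~\ref{vNthm}) this gives $T\Delta_T^{-1}1 \in \dom{T^*}$; applying the $L$-invariance from part (1) we get $L^\alpha T\Delta_T^{-1}1 \in \dom{T^*}$. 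Hence $T(L^\alpha\Delta_T^{-1}1) \in \dom{T^*}$, which is exactly the statement $L^\alpha\Delta_T^{-1}1 \in \dom{T^*T}$.

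The substantive difficulty has already been absorbed into Proposition~\ref{Linvprop2} (the collapse of the wandering space to one dimension via the column-extreme machinery of \cite{JMfreeNC}); within this corollary the only step demanding care is part (2), where one must upgrade membership in $\dom{T}$ to membership in $\dom{T^*T}$. That upgrade is precisely what the $L$-invariance of $\dom{T^*}$ established in part (1) supplies, so the three parts must be proved in the stated order.
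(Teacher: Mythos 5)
Your proposal is correct and follows essentially the same route as the paper: part (3) by combining the operator-range-space theorem of Subsection \ref{rkanalysis} with Proposition \ref{Linvprop2}, part (1) by observing that the inner column $\Theta$ with $A \neq 0$ forces $B$ to be non-CE so that Lemma \ref{nonQElemma} gives $L$-invariance of $\dom{T^*} = \scr{H}^R(B)$, and part (2) by combining that invariance with affiliation and $\Delta_T^{-1}1 \in \dom{T^*T}$. The only difference is that you spell out steps the paper's very terse proof leaves implicit (the appeal to Proposition \ref{Linvprop2} for one-dimensionality of the wandering space, and the verification that $L^\alpha \Delta_T^{-1}1 \in \dom{T}$ via the graph).
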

\begin{proof}
    Since $\Theta = \bbm A \\ B \ebm$ is Schur, and $A \neq 0$, $B$ is non-CE and $\mc{D} _* = \dom{T^*} = \scr{H} ^R (B)$ is necessarily $L$-invariant. In particular, since $\Delta _T ^{-1} 1  \in \dom{T^* T}$, it follows that $T \Delta _T ^{-1} 1 \in \dom{T ^*}$, and hence $L^\alpha T \Delta _T ^{-1} 1 = T L^\alpha \Delta _T ^{-1} 1 \in \dom{T^*}$ by $L$-invariance of $\dom{T^*}$.
\end{proof}

\section{Factorization for the free Smirnov class}

\begin{defn}
 Given $F \in \scr{N} _d ^+ (R)$, a pair $(A , B ) \in \scr{R} _d \times \scr{R} _d$ is called an inner-outer pair for $F$ if $F(Z) = A ^\dag(Z) ^{-1} B ^\dag (Z)$, $A  \in \scr{R} _d$ is right-outer, and the column $$ \Theta  (R) := \bpm A (R) \\ B  (R) \epm \in \scr{R} _d (\C , \C ^2)$$ is right-inner. We say that $(A,B)$ is \emph{maximal} if the range of $\Theta (R)$ is the graph of $T:= M^R _{F(Z)}$, defined on its maximal domain.
\end{defn}
\begin{cor} \label{denominf2}
    Any right free Smirnov $F \in \scr{N} _d ^+ (R)$ has a unique maximal inner-outer pair $(A_F ,B_F )$. We have that $F^2 _d \subseteq \scr{N} _d ^+ (R)$, and $F \in F^2_d$ if and only if the outer $A_F $ is such that $A_F ^{-1} 1 \in F^2 _d$.
\end{cor}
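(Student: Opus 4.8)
The plan is to treat the three assertions in turn, in each case passing to the closed operator $T := M^R _{F(Z)}$ of right multiplication by $F(Z)$ on its maximal domain $\dom{T} = \{ f \in F^2 _d \mid f(Z) F(Z) \in F^2 _d \}$, and reading the conclusions off the structure theory of Section~4.

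For the existence of a maximal inner–outer pair, since $F \in \scr{N} _d ^+ (R)$ the corollary to Lemma~\ref{BabySmirnov} guarantees that $T$ is densely-defined, closed and local, so $T \sim R^\infty _d$. Corollary~\ref{main2}(3) then produces a right-inner column $\Theta (R) = \bbm A_F (R) \\ B_F (R) \ebm \in \scr{R} _d (\C , \C ^2)$ with $\ran{\Theta (R)} = G(T)$ and $A_F$ outer. To see that $(A_F , B_F)$ represents $F$, I would take the symbol $A_F (R) 1 \in \dom{T}$ and evaluate $T(A_F (R)1) = B_F (R) 1$ as free holomorphic functions; since $A_F$ is outer, its symbol-function is invertible on $\B ^d _\N$ by Lemma~\ref{freeouter1}, and this exhibits $F$ as the corresponding right ratio, $F = (A_F ^\dag) ^{-1} B_F ^\dag \in \scr{N} _d ^+ (R)$. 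This makes $(A_F , B_F)$ a maximal inner–outer pair. For uniqueness, I note that the maximal domain is determined by $F(Z)$ alone, hence so is $T$ and so is $G(T)$; if $(A,B)$ is any other maximal inner–outer pair then $\bbm A (R) \\ B(R) \ebm$ and $\Theta (R)$ are isometries with the same range $G(T)$, hence differ by right multiplication by a unitary $W$ in the commutant of the left free shift (the right multiplier algebra). Such a $W$ is simultaneously inner and outer, forcing $W$ to be a unimodular constant, so $(A,B) = (cA_F , cB_F)$ with $|c|=1$; this scalar is harmless since it cancels in $A^\dag{}^{-1} B^\dag = F$.

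For the inclusion $F^2 _d \subseteq \scr{N} _d ^+ (R)$, fix $h \in F^2 _d$; its free function $h(Z)$ is holomorphic on $\B ^d _\N$, so Lemma~\ref{BabySmirnov} makes the maximal-domain operator $T = M^R _{h(Z)}$ closed and affiliated to the right free shift. The key observation is that $\dom{T}$ is dense: indeed $1 \in \dom{T}$ because $1 \cdot h(Z) = h(Z) \in F^2 _d$, while affiliation gives $L_k \dom{T} \subseteq \dom{T}$, so $L^\alpha 1 \in \dom{T}$ for every $\alpha \in \F ^d$; as the free polynomials $\bigvee _\alpha L^\alpha 1$ are dense in $F^2 _d$, we conclude $T \sim R^\infty _d$. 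Corollary~\ref{main1} then forces $T \sim \scr{N} _d ^+ (R)$, i.e. $h(Z) \in \scr{N} _d ^+ (R)$, which is the claimed inclusion.

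Finally, for the criterion $F \in F^2 _d$, I would use the identification $\dom{T} = \ran{A_F (R)}$ coming from Corollary~\ref{Smircor} (the first-component projection $X$ of $G(T) = \ran{\Theta (R)}$ has range $\ran{A_F (R)}$). Now $1 \in \dom{T}$ exactly when $1 \cdot F(Z) = F(Z) \in F^2 _d$, that is, precisely when $F \in F^2 _d$; and $1 \in \dom{T} = \ran{A_F (R)}$ holds, since $A_F (R)$ is injective with dense range, if and only if $A_F ^{-1} 1 := A_F (R) ^{-1} 1 \in F^2 _d$. Chaining these equivalences yields the stated characterization. I expect the substantive difficulty to lie in the theorems invoked rather than in this corollary; within the corollary itself the only points requiring care are the density argument above (handled by the $L$-invariance of $\dom{T}$ together with $1 \in \dom{T}$) and the scalar-rigidity of the unitary $W$ in the uniqueness argument.
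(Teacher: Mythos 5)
Your proof is correct and follows essentially the same route as the paper: both reduce to the maximal-domain operator $T = M^R_{F(Z)}$, invoke Corollary \ref{main2} for existence of the inner-outer pair, obtain $F^2_d \subseteq \scr{N}_d^+(R)$ from $1 \in \dom{T}$ together with affiliation (so the free polynomials lie in $\dom{T}$, which is then dense) and Corollary \ref{main1}, and read the final equivalence off the identification $\dom{T} = \ran{A_F(R)}$ exactly as the paper does. Your explicit uniqueness argument (two inner columns with range $G(T)$ differ by a unitary right multiplier, which must be a unimodular scalar) supplies a detail the paper leaves implicit in its citation of Corollary \ref{main2}, and your remark that uniqueness holds only up to that unimodular constant is a correct refinement of the stated claim.
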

\begin{lemma} \label{F2char}
Let $H$ be a free NC function on $\B _\N ^d$, and assume that $M^R _H$ is densely-defined so that $H(Z) \in \scr{N} _d ^+ (R)$. Then $H \in F^2 _d$ if and only
if $1 \in \dom{M^R _H}$. If $H \in F^2 _d$ then $L^\infty _d 1 = R^\infty _d 1 \subseteq \dom{M^R _H}$.
\end{lemma}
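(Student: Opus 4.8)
The plan is to separate the biconditional from the inclusion of symbol spaces, since they rest on different mechanisms. For the equivalence, I would use the description of the domain furnished by Lemma~\ref{BabySmirnov}: $1 \in \dom{M^R _H}$ holds exactly when the free function $Z \mapsto 1(Z)H(Z)$ belongs to $H^2(\B^d_\N) = F^2_d$. Since the constant free function $1$ takes the value $I_{\C^n}$ at each $Z \in \B^d_n$, we have $1(Z)H(Z) = H(Z)$; that is, $M^R_H 1 = H$ as free functions (the right-hand analogue of the fact that $G1$ is the symbol of $G \in R^\infty_d$). Hence $1 \in \dom{M^R_H}$ precisely when $H \in F^2_d$, giving the stated equivalence.

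For the inclusion, the mechanism is the reordering identity $f(Z)H(Z) = (M^L_f H)(Z)$: right multiplication of $f$ by $H$ coincides, as a free function, with left multiplication of $H$ by $f$. Consequently $f \in \dom{M^R_H}$ if and only if $M^L_f H \in F^2_d$. If $f \in L^\infty_d 1$, so that $f = M^L_f \cdot 1$ with $M^L_f \in H^\infty_L(\B^d_\N) \simeq L^\infty_d$ a \emph{bounded} left multiplier, then $M^L_f H \in F^2_d$ is immediate from $H \in F^2_d$. This yields $L^\infty_d 1 \subseteq \dom{M^R_H}$ with no further work, and in particular recovers the free polynomials $\C\{L_1,\dots,L_d\}1 \subseteq \dom{M^R_H}$.

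The step I expect to be the main obstacle is the identity $L^\infty_d 1 = R^\infty_d 1$, equivalently the inclusion $R^\infty_d 1 \subseteq \dom{M^R_H}$. The natural tool is the transpose unitary $U_\dag$, which fixes the vacuum and conjugates $L^\infty_d$ onto $R^\infty_d$, so that $R^\infty_d 1 = U_\dag(L^\infty_d 1)$; the content is then the $U_\dag$-invariance of the space of bounded symbols. The delicacy is that $U_\dag$ replaces a symbol $x$ by its reversal $x^\dag$, so this only matches left-boundedness of $x$ with right-boundedness of $x^\dag$, not with right-boundedness of $x$ itself. To bridge this I would invoke the identity that the multiplier norm equals the supremum norm over $\B^d_\N$ and try to argue that this supremum does not see the left/right distinction; but this requires genuine care, since for $f \in R^\infty_d 1 \setminus L^\infty_d 1$ there is no bounded $M^L_f$ to feed into the reordering identity, and approximating $f$ by Ces\`aro sums $\sigma_N(f)1$ (which do lie in the domain, being polynomial vectors) only closes the argument if $M^L_{\sigma_N(f)1}H$ converges, which is again exactly the point at issue. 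This comparison of left- versus right-bounded vectors for a single symbol is where I would concentrate the work and where I would most carefully check the stated equality.
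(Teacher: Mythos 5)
Your proofs of the equivalence and of the inclusion $L^\infty_d 1 \subseteq \dom{M^R_H}$ are correct, and on the inclusion your route is more direct than the paper's: the paper first uses affiliation (Lemma \ref{BabySmirnov} gives that $M^R_H$ is closed and affiliated to the right free shift, so $1 \in \dom{M^R_H}$ forces every free polynomial vector $L^\alpha 1$ into the domain), then approximates $F(L) \in L^\infty_d$ by Ces\`aro sums $p_n(L) \to F(L)$ in SOT and invokes closedness of $M^R_H$ to pass to the limit in $M^R_H\, p_n(L)1 = p_n(L)H \to F(L)H$. You bypass the approximation entirely: $f(Z)H(Z) = (M^L_f H)(Z)$ lies in $H^2(\B^d_\N)$ simply because $M^L_f$ is a bounded operator and $H \in F^2_d$, so $f \in \dom{M^R_H}$ by the maximal-domain definition. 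Both arguments are sound.

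The gap is exactly where you placed it: the equality $L^\infty_d 1 = R^\infty_d 1$, and hence the assertion $R^\infty_d 1 \subseteq \dom{M^R_H}$, is never established in your proposal. Two things to know. First, this is a standard fact about the free analytic Toeplitz algebras, going back to Davidson and Pitts; the paper's own proof does not prove it either, but treats it as known, so the step is citable rather than fatal. Second, your instinct for repairing it is correct, and it is not circular. The shortest closure uses the commutant theorem $(L^\infty_d)' = R^\infty_d$: if $x = F(L)1$, the map $p(L)1 \mapsto p(L)x = F(L)p(L)1$ is bounded by $\| F(L) \|$ and commutes with every $L_k$ on the dense span of polynomial vectors, hence extends to an operator $G \in \{ L_1, \dots , L_d \} ' = R^\infty_d$ with $G1 = x$; interchanging the roles of $L$ and $R$ gives the reverse inclusion. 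Alternatively, the sup-norm route you sketched does succeed: for a free polynomial $q$, left and right multiplication by $q(Z)$ have the same operator norm, namely $\sup_{Z} \| q(Z) \|$, so the (uniformly bounded) right Ces\`aro sums of any $G(R) \in R^\infty_d$ yield uniformly bounded \emph{left} multiplication operators by the same polynomials, whose SOT limit is an element $\wt{G}(L) \in L^\infty_d$ with $\wt{G}(L)1 = G(R)1$; this proves $R^\infty_d 1 \subseteq L^\infty_d 1$, and symmetrically the converse. What genuinely fails is only the bare $U_\dag$-conjugation you were worried about, which identifies $R^\infty_d 1$ with the transposed set $(L^\infty_d 1)^\dag$ rather than with $L^\infty_d 1$ itself.
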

\begin{proof}
If $H$ is a free holomorphic NC function, and $T:= M^R _H$ is densely-defined, then it is closed (on its maximal domain) and affiliated to the right free shift by Lemma \ref{BabySmirnov} so that $H(Z) \in \scr{N} _d ^+ (R)$ by Corollary \ref{main1}. If, furthermore, $1 \in \dom{M^R _H}$ then $H = H1 \in F^2 _d$. Conversely, if $H \in F^2 _d$ then $1 \in \dom{M^R _H}$.
Since $M^R _H \sim R^\infty _d$, we have that $\bigvee L^\alpha 1 \in \dom{M^R _H}$. Given any $F \in L^\infty _d$, one can approximate $F$ by NC polynomials $p_n (L)$ so that $p_n (L) \rightarrow F(L)$ in the strong operator topology (\emph{e.g.} take Ces\`aro sums). Then, $p_n(L)1 \rightarrow F(L)1$, and
$$ M^R _H p_n (L) 1 = p_n (L ) H \rightarrow F(L) H = M_H ^R F(L) 1 $$ since $p_n (L) \rightarrow F (L)$ in SOT and $H \in F^2 _d$.
\end{proof}
\begin{proof}{ (of Corollary \ref{denominf2})}
Setting $T:= M^R _{F(Z)} \sim R^\infty _d$, the first part of the claim follows from Corollary \ref{main2}. By Lemma \ref{F2char}, if $F \in F^2 _d$ then $1 \in \dom{T}$. Since $\dom{T} = \ran{M^R _{A_F}}$, there is an $x \in F^2 _d$ such that $M^R _{A_F} x = 1$, and it follows that $x = A_F ^{-1} 1$. That is, $x(Z) = A_F (Z) ^{-1}$, for any $Z \in \B ^d _\N$.
\end{proof}
\begin{cor} \label{uniquefact}
Given $F \in \scr{N} _d ^+ (R)$, suppose that  $F^\dag (Z) = D^\dag (Z) ^{-1} N^\dag (Z)$ with $D, N \in R^\infty _d$, and $N$ right-outer. Then there is a $C \in R^\infty _d$ with $C^\dag (Z)$ invertible on the NC unit ball so that $D =A_F C$ and $N =B_F C$. In particular, if $(D,N)$ is another inner-outer pair, then $C(R)$ is inner, invertible on the NC ball, and $\ran{C(R)} ^\perp \cap \ran{A_F (R)^*} =\{ 0 \}.$
\end{cor}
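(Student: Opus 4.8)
The plan is to realize both factorizations as column multipliers whose ranges sit inside the graph $G(T)$, and then to extract $C$ as the right multiplier implementing the containment. First I would fix the maximal inner--outer pair $(A_F,B_F)$ furnished by Corollary~\ref{denominf2}, so that $\Theta (R) = \bbm A_F (R) \\ B_F (R) \ebm$ is right-inner, $A_F$ is right-outer, and $G(T) = \ran{\Theta (R)}$ with $T = M^R _{F(Z)}$ on its maximal domain. The content of the hypothesis $F^\dag (Z) = D^\dag (Z) ^{-1} N^\dag (Z)$ with $N$ right-outer is, at the operator level, that $D(R) = M^R _D$ is injective (any nonzero right multiplier is injective) and that $T' := M^R _N (M^R _D) ^{-1}$, defined on $\ran{D(R)}$, is a restriction of $T$; since $T$ carries the maximal domain, $T' \subseteq T$, and hence the column $\Psi (R) := \bbm D(R) \\ N(R) \ebm$ satisfies $\ran{\Psi (R)} = G(T') \subseteq G(T) = \ran{\Theta (R)}$.

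Next I would define $C(R) := \Theta (R) ^* \Psi (R)$. Because $\Theta (R)$ is an isometry and $\ran{\Psi (R)}$ is contained in the closed subspace $\ran{\Theta (R)}$, the range projection $\Theta (R) \Theta (R) ^*$ fixes $\Psi (R)$, giving $\Theta (R) C(R) = \Psi (R)$, i.e. $D(R) = A_F (R) C(R)$ and $N(R) = B_F (R) C(R)$. The crux of the argument is to verify that the a priori merely bounded operator $C(R)$ is in fact a right multiplier. For this I would use that right multipliers commute with the left shift: since $\Theta (R) L_k = (L_k \otimes I_2) \Theta (R)$ and likewise for $\Psi (R)$, the identity $\Theta (R) \Theta (R) ^* \Psi (R) = \Psi (R)$ forces $C(R) L_k = L_k C(R)$ for every $k$. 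A bounded operator commuting with each $L_k$ lies in the commutant $(L^\infty _d) ' = R^\infty _d$, so $C := C(R) \in R^\infty _d$. Translating these operator identities through the right product of Remark~\ref{rightproduct} records them as $D = A_F C$ and $N = B_F C$, with $D^\dag (Z) = C^\dag (Z) A_F ^\dag (Z)$ and $N^\dag (Z) = C^\dag (Z) B_F ^\dag (Z)$.

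Invertibility of $C^\dag (Z)$ on $\B ^d _\N$ is then immediate: since $N$ is outer, $N^\dag (Z)$ is invertible at every $Z$ by Lemma~\ref{freeouter1}, and the factorization $N^\dag (Z) = C^\dag (Z) B_F ^\dag (Z)$ forces $C^\dag (Z)$ to be surjective, hence invertible as a square matrix. For the ``in particular'' clause, if $(D,N)$ is itself an inner--outer pair then $\Psi (R)$ is inner, whence $C(R) ^* C(R) = \Psi (R) ^* \Theta (R) \Theta (R) ^* \Psi (R) = \Psi (R) ^* \Psi (R) = I$ shows $C(R)$ is inner; invertibility of $C^\dag (Z)$ is as above; and the last assertion follows from the outerness of $D$, since the adjoint identity $(M^R _D) ^* = C(R) ^* A_F (R) ^*$ is then injective, so any $g = A_F (R) ^* h$ lying in $\ran{C(R)} ^\perp = \ker{C(R) ^*}$ satisfies $(M^R _D) ^* h = C(R) ^* g = 0$, giving $h = 0$ and $g = 0$.

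I expect the single genuine difficulty to be the multiplier step: showing that $C(R) = \Theta (R) ^* \Psi (R)$ belongs to $R^\infty _d$ rather than being merely bounded. This is the free analogue of the classical fact that a ratio of analytic functions with an inner denominator and nested ranges is again analytic, and it rests on the Beurling--Lax description of $\ran{\Theta (R)}$ together with the commutant relation $(L^\infty _d) ' = R^\infty _d$. The remaining steps are routine, relying only on Lemma~\ref{freeouter1}, the right-product bookkeeping of Remark~\ref{rightproduct}, and the injectivity of nonzero right multipliers.
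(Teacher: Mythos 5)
Your proof is correct, and it departs from the paper's at both of the two substantive steps. Write $\Psi(R) := \bpm D(R) \\ N(R) \epm$. For the containment $\ran{\Psi(R)} \subseteq G(T)$, the paper does not argue from maximality directly: it checks that the operator $\hat{T}$ whose graph is the closure of $\ran{\Psi(R)}$ satisfies $\hat{T}^* \kz = K\{Z,y,F(Z)v\} = T^*\kz$, invokes Corollary \ref{corelem} (the kernel vectors are a core for $T^*$) to get $T^* \subseteq \hat{T}^*$, and dualizes to obtain $\hat{T} \subseteq T$; your pointwise computation $(D(R)h)(Z)F(Z) = h(Z)N^\dag (Z) = (N(R)h)(Z) \in H^2(\B^d_\N)$ accomplishes the same thing more directly, using the maximal domain exactly as defined in Lemma \ref{BabySmirnov}. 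For the factorization itself, the paper invokes its free Douglas Factorization Property (Theorem \ref{FDFP}), whose proof runs through Douglas's lemma and commutant lifting, whereas you exploit the fact that $\Theta(R)$ is an isometry: $\Theta(R)\Theta(R)^*$ is the projection onto $G(T)$, so $C(R) := \Theta(R)^*\Psi(R)$ solves $\Theta(R)C(R) = \Psi(R)$ explicitly, and membership of $C(R)$ in $R^\infty_d$ follows from the Davidson--Pitts commutant theorem $(L^\infty_d)' = R^\infty_d$ (available in \cite{DP-inv}, which the paper already cites for injectivity of nonzero right multipliers). Your intertwining claim is stated tersely but is easily completed: $C(R)L_k = \Theta(R)^*(L_k\otimes I_2)\Psi(R) = \Theta(R)^*(L_k\otimes I_2)\Theta(R)\Theta(R)^*\Psi(R) = L_k\Theta(R)^*\Psi(R)$, using first $\Theta(R)\Theta(R)^*\Psi(R)=\Psi(R)$ and then $(L_k\otimes I_2)\Theta(R) = \Theta(R)L_k$ together with $\Theta(R)^*\Theta(R)=I$. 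The trade-off: the paper's route via Theorem \ref{FDFP} is uniform (it applies to range containments whose dominating column is not inner), while yours is more self-contained for this corollary and produces $C$ by an explicit formula. One further point in your favor: your invertibility argument ($N^\dag(Z) = C^\dag(Z)B_F^\dag(Z)$ with $N^\dag(Z)$ invertible forces $C^\dag(Z)$ to be onto, hence invertible) uses the hypothesis exactly as stated ($N$ right-outer), whereas the paper's proof asserts ``both $D, A_F$ are outer,'' silently reading the hypothesis as outerness of the denominator $D$; either reading yields the conclusion, and in the ``in particular'' clause your appeal to outerness of $D$ is legitimate because that is part of the definition of an inner-outer pair.
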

This corollary follows from the free Douglas Factorization Property:
\begin{thm} \label{FDFP}
Let $A \in R^\infty _d (\H , \K)$ and $B \in R^\infty _d (\J , \K )$ be right free multipliers such that $\ran{A(R)} \subset \ran{B(R)}$. Them there is a unique right free multiplier
$C \in R^\infty _d (\H , \J )$ so that $A = B C $, $\ker{C(R)} \subset \ker{A(R) }$ and
$$ \| C \| ^2 = \inf \{ \la ^2 \geq 0 | \ A A ^* \leq \la ^2 B B ^* \}. $$
\end{thm}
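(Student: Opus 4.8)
The plan is to separate the \emph{numerical} content of the statement---the value of $\|C\|$ and the finiteness of the infimum---from its \emph{structural} content, namely that the factor $C$ is an honest right free multiplier and not merely a bounded Hilbert space operator. The numerical part is classical: applying the Douglas lemma to the bounded operators $A(R), B(R)$ on the vector-valued Fock spaces, the containment $\ran{A(R)} \subseteq \ran{B(R)}$ is equivalent to $A(R)A(R)^* \leq \lambda^2 B(R)B(R)^*$ for some $\lambda \geq 0$, so $\lambda_0^2 := \inf\{\lambda^2 \geq 0 \mid AA^* \leq \lambda^2 BB^*\}$ is finite, and the classical lemma produces a unique bounded operator $C_0$ with $A(R) = B(R)C_0$, $\ran{C_0} \subseteq \overline{\ran{B(R)^*}}$ (equivalently $\ker{C_0} = \ker{A(R)}$), and $\|C_0\|^2 = \lambda_0^2$. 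The entire difficulty is thus concentrated in showing that $C_0$, or at least some minimal factor, lies in $R^\infty_d \otimes \L(\H,\J)$. Recalling that, by Davidson--Pitts, $R^\infty_d$ is exactly the commutant of the left free shift $\{L_1,\dots,L_d\}$, this is the assertion that the factor commutes with each $L_k$ in the appropriate vector-valued sense.

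For the structural step I would invoke the noncommutative Leech theorem (equivalently Popescu's commutant lifting theorem for the full Fock space). For each $\lambda > \lambda_0$ the operator inequality $A(R)A(R)^* \leq \lambda^2 B(R)B(R)^*$ holds, and the Leech theorem produces a right free multiplier $C_\lambda \in R^\infty_d \otimes \L(\H,\J)$ with $A = BC_\lambda$ and $\|C_\lambda\| \leq \lambda$. Since the closed ball of radius $\lambda_0 + 1$ in $R^\infty_d \otimes \L(\H,\J)$ is WOT-compact, I would let $\lambda \downarrow \lambda_0$ and extract a WOT-convergent subnet $C_\lambda \to C$; because $B(R)$ is bounded, the relation $B(R)C_\lambda = A(R)$ is preserved in the limit, yielding a genuine \emph{multiplier} $C$ with $A = BC$ and, by weak lower semicontinuity of the norm, $\|C\| \leq \lambda_0$.

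The norm equality then closes from both sides: $A = BC$ forces $AA^* = B\,CC^*B^* \leq \|C\|^2 BB^*$, whence $\lambda_0 \leq \|C\|$, and combined with $\|C\| \leq \lambda_0$ this gives $\|C\|^2 = \lambda_0^2 = \inf\{\lambda^2 \mid AA^* \leq \lambda^2 BB^*\}$. The inclusion $\ker{C(R)} \subseteq \ker{A(R)}$ is immediate from $A = BC$. For uniqueness I would impose the canonical normalization $\ran{C(R)} \subseteq \overline{\ran{B(R)^*}}$ (equivalently $\ker{C(R)} = \ker{A(R)}$), under which the classical Douglas lemma already guarantees a unique bounded solution; it then suffices to verify that the multiplier produced above meets this normalization, so that it coincides with $C_0$ and the canonical Douglas factor is exhibited as the desired free multiplier.

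I expect the main obstacle to be precisely this multiplier property, i.e.\ passing from a bounded Douglas factor to a free multiplier. The naive approach---computing the commutator $D_k := C_0(L_k \otimes I_\H) - (L_k \otimes I_\J)C_0$ and using that $A, B$ commute with each $L_k$---only yields $B(R)D_k = 0$, i.e.\ $\ran{D_k} \subseteq \ker{B(R)}$. Since $\ker{B(R)}$ is $L$-invariant, its orthocomplement $\overline{\ran{B(R)^*}}$ is only $L^*$-invariant, so the range condition on $C_0$ does \emph{not} force $D_k = 0$ directly; this is exactly why I route the argument through the Leech/commutant-lifting theorem, which manufactures a bona fide multiplier factor at the cost of first passing to $\lambda > \lambda_0$ and taking a weak limit.
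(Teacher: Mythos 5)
Your existence and norm-formula arguments follow exactly the route the paper itself intends: the paper's entire ``proof'' of this theorem is the one-sentence remark that it can be obtained from the Douglas Factorization Lemma together with commutant lifting for row contractions, which is precisely your Douglas-plus-Leech scheme, and those parts of your proposal are correct. One simplification: the set $\left\{ \la \geq 0 \ \left| \ A A^* \leq \la ^2 B B ^* \right. \right\}$ is closed, so the infimum $\la _0$ is attained, and a single application of the free Leech theorem at $\la = \la _0$ already produces a multiplier $C$ with $\| C \| \leq \la _0$; the WOT-compactness subnet argument is harmless but unnecessary.

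The genuine gap is in your uniqueness step. You propose to verify that the Leech factor satisfies the Douglas normalization $\ran{C(R)} \subseteq \ov{\ran{B(R) ^*}}$ and hence coincides with the classical Douglas solution $C_0$. This verification is impossible in general: $C_0$ is typically \emph{not} a multiplier, and since $C_0$ is the \emph{only} bounded operator satisfying $B(R) C_0 = A(R)$ together with that range normalization, no multiplier solution can satisfy it. A one-variable example already within the scope of the theorem: take $d=1$, $\H = \K = \C$, $\J = \C ^2$, $A = 1$ and $B = \bbm z & c \ebm$ with $0 < c < 1$. Then $B(R) B(R) ^* = M_z M_z ^* + c^2 I = (1+c^2) I - P_0$, where $P_0$ is the projection onto the constants, and $C_0 = B(R) ^* \left( B(R) B(R) ^* \right) ^{-1}$ has first component $\frac{1}{1+c^2} M_z ^*$, which does not commute with $M_z$; meanwhile the unique minimal-norm analytic solution is the column with entries $0$ and $1/c$, whose range $0 \oplus H^2$ is not orthogonal to $\ker{B(R)} = \left\{ (cf, -zf) \ | \ f \in H^2 \right\}$. (Your parenthetical claim that the range normalization is ``equivalent'' to $\ker{C_0} = \ker{A(R)}$ is also false; the range condition is strictly stronger.) Worse, uniqueness cannot be recovered from the conditions displayed in the theorem alone: $\ker{C(R)} \subseteq \ker{A(R)}$ is automatic from $A = BC$, and minimal norm does not single out $C$ in the operator-valued setting --- with constant multipliers $B = \mathrm{diag} (1,1,0)$ on $\C ^3$ and $A : \C ^2 \rightarrow \C ^3$ having rows $(1,0)$, $(0, \tfrac{1}{2})$, $(0,0)$, the two constant multipliers with third row $(0,0)$ and third row $(0,\tfrac{1}{2})$ are distinct norm-one solutions with equal (trivial) kernels. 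In the scalar-valued case uniqueness is immediate for a different reason (nonzero elements of $R^\infty _d$ are injective, so $B (C_1 - C_2) = 0$ forces $C_1 = C_2$), and any correct operator-valued uniqueness argument will have to proceed along such kernel lines, under a normalization compatible with analyticity, rather than through the Douglas normalization. To be fair, the paper supplies no argument for uniqueness either; but your proposal, as written, hinges on a verification that is false.
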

This theorem can be proven using the 
Douglas Factorization Lemma \cite{DouglasLemma} and communtant lifting for row contractions \cite{Ball2001-lift,DL2010commutant} (see \cite{McTrent-DFP} for the abelian analogue). 
\begin{proof}{ (of Corollary \ref{uniquefact})}
Define $T:= M^R _F$ on its maximal domain. Lemma \ref{corelem} implies that the span of the kernel vectors $K \{ Z , y , v \}$ is a core for $T^*$. Also by assumption, if $\hat{T}$ acts as multiplication by $\hat{T} (Z) = D^\dag (Z) ^{-1} N^\dag (Z) = F(Z)$ on $\ran{D(R)}$, then 
$$ \hat{T} ^* \kz  = K \{ Z , y , F(Z) v  \} = T^* \kz,$$ and we conclude that $T^* \subseteq \hat{T} ^*$ (since the span of the $\kz$ is a core for $T^*$). Hence, $\hat{T} \subseteq T$ so that 
$$ G (\hat{T} ) = \ran{\bpm D(R) \\ N(R) \epm} ^{-\| \cdot \| } \subseteq G(T) = \ran{\bpm A_F(R) \\ B_F (R) \epm}. $$ By free Douglas factorization, there is a $C \in R^\infty _d$ so that 
$$ \bpm D(R) \\ N(R) \epm  = \bpm A_F (R) \\ B_F (R) \epm C(R) =  \bpm A_F (R) C(R) \\ B_F (R) C(R) \epm . $$ Since both $D, A_F $ are outer, $D ^\dag (Z)$ and $A_F ^\dag (Z)$ are invertible for any $Z \in \B ^d _\N$, and it follows that $C$ must have the same property. If $(D,N)$ is another inner-outer pair for $F$, then $C(R)$ must be a singular inner right multiplier, \emph{i.e.} an inner which is invertible on the NC unit ball. Moreover, since $A_F (R), D(R)$ are outer, it must be that $C(R)^* A_F (R) ^* = D (R) ^*$ is injective, so that $\ran{A_F (R) ^*} \cap \ran{C(R)} ^\perp = \{ 0 \}$.  
\end{proof}
\section{Appendix: the Free Pick interpolation theorem and the Free Smirnov class}
\label{freePick}
In this section, we show that the proof, \cite[Theorem 10.3]{Alpay}, that vectors in the Drury-Arveson space belong to the multi-variable Smirnov class, can be made to work in the free setting of $H^2 (\B ^d _\N )$. To accomplish this, we will need free function theory analogues of
the Pick and Leech theorems \cite[Theorem 2.1, Theorem 3.1]{Alpay}. The recent reference \cite{BMV2} proves free Pick and Leech theorems in a much more general setting, and \cite{Sha2017-Pick} also proves the free Pick theorem.

In this free setting the (operator-valued) Nevanlinna-Pick (NP) interpolation problem is:
Let $\H, \K$ be two Hilbert spaces. Given the initial data:
$$\mbox{\textbf{(NP)}} \quad \left\{ \begin{array}{l}  1 \leq k \leq N \\  Z^{(k)} \in \B ^d _{n_k}    \\  W_k \in \C ^{n_k \times n_k} \otimes \L (\H , \K)  \end{array} \right., $$
when can we find an element $B \in \scr{L} _d (\H, \K )$ (or in the right operator-valued Schur class)
so that $$ B(Z^{(k)}) = W_k?$$

The answer is given by:

\begin{thm}{ (free Pick theorem)}
The left Nevanlinna-Pick problem has a solution if and only if,
$$ \left[ K (Z^{(k)} , Z^{(j)} )  \otimes  I_\K - W_k \left( K (Z^{(k)} , Z^{(j)} )  \otimes I_\H \right) W_j ^* \right] _{1 \leq j,k \leq N }\geq 0.$$

The right NP problem has a solution if and only if
$$ \left[ K (Z^{(k)} , Z^{(j)} ) \otimes I_\K  -  K (Z^{(k)} , Z^{(j)} ) [W_k  \left( \cdot \otimes I_\H \right)  W_j ^* ]  \right] \geq 0.$$
\end{thm}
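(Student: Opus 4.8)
The plan is to prove both directions, with necessity being routine and sufficiency carrying the weight.

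\emph{Necessity.} Suppose $B \in \scr{L} _d (\H , \K)$ solves the left problem, so that $B(Z^{(k)}) = W_k$ for each $k$. As recorded in Subsection~\ref{freedBRspace}, membership of $B$ in the (operator-valued) left Schur class is equivalent to complete positivity of the left free deBranges--Rovnyak kernel
\[ K^B (Z,W)[\cdot] := K(Z,W)[\cdot] \otimes I_\K - B(Z) \left( K(Z,W)[\cdot] \otimes I_\H \right) B(W) ^*. \]
Since a CPNC kernel is positive semi-definite on any finite family of points, assembling the values $K^B (Z^{(k)} , Z^{(j)})$ into the $N \times N$ block operator on $\bigoplus _k (\C ^{n_k} \otimes \K)$ and substituting $B(Z^{(k)}) = W_k$ produces precisely the left Pick matrix, which is therefore positive.

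\emph{Sufficiency (the main step).} Here I would run a lurking-isometry argument built on the fundamental difference identity for the NC Szeg\"{o} kernel. A direct computation from the defining series yields, for $Z \in \B ^d _n$, $W \in \B ^d _m$ and $P \in \C ^{n \times m}$,
\[ K(Z,W)[P] = P + \sum _{i=1} ^d Z_i \, K(Z,W)[P] \, W_i ^*, \]
the NC analogue of $(1 - z \bar w) k(z,w) = 1$. Factoring the positive Pick matrix as $\mathbb{P} = \mathbf{G} ^* \mathbf{G}$ via a Kolmogorov decomposition into an auxiliary space $\mathcal X$, and feeding this identity into the Pick inequality node by node, the positivity rearranges into a norm-balance of the form $\| \mathbf{G} x \| ^2 + \| \mathbf{W} ^* x \| ^2 = \| \mathbf{e} ^* x \| ^2 + \sum _i \| \mathbf{G} \mathbf{Z} _i ^* x \| ^2$, where $\mathbf{Z} _i$ is the block-diagonal of the $i$-th coordinate matrices at the nodes and $\mathbf{e} , \mathbf{W}$ are the columns of identities and of the $W_k$, respectively. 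This identity exhibits a densely-defined isometry $V$ which, after passing to adjoints and extending (enlarging $\mathcal X$ if necessary), yields a unitary colligation $\bbm \mathfrak{A} & \mathfrak{B} \\ \mathfrak{C} & \mathfrak{D} \ebm : \mathcal X ^d \oplus \H \to \mathcal X \oplus \K$.

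The interpolant is then recovered as the transfer function of this colligation: the free-holomorphic function $B(Z)$ defined by the associated realization formula is automatically an element of $\scr{L} _d (\H , \K)$, and by construction of $V$ it satisfies $B(Z^{(k)}) = W_k$ at each node. \textbf{The hard part} is the operator-valued, non-commutative bookkeeping in this step: checking that $V$ is genuinely isometric and independent of the Kolmogorov factorization, that the extension to a unitary colligation can be carried out, and that the resulting transfer-function series converges and reproduces the data. This is exactly where the free function theory on $\B ^d _\N$ (the respecting of the grading and intertwinings, together with the characterization of Schur multipliers by positivity of the dBR kernel) does the work. As an alternative to constructing the colligation by hand, one may simply invoke the general free Pick/realization results of \cite{BMV2} or \cite{Sha2017-Pick}.

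\emph{The right problem.} Finally, the right version follows from the identical argument with the roles of left and right multiplication interchanged, using the right difference identity and the right deBranges--Rovnyak kernel of Subsection~\ref{freedBRspace}; equivalently, conjugating the data by the transpose unitary $U_\dag$ reduces the right problem to the left one, in keeping with the left/right symmetry noted throughout the paper.
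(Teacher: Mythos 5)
Your proof is correct, but the sufficiency direction takes a genuinely different route from the paper. The paper follows Sarason's commutant-lifting template: it defines $F^*$ directly on the finite-dimensional right-multiplier co-invariant subspaces $\bigvee_k K_{Z^{(k)}}\,\C^{n_k\times n_k}\otimes\K$ and $\bigvee_k K_{Z^{(k)}}\,\C^{n_k\times n_k}\otimes\H$ by $F^*K_{Z^{(k)}}(A\otimes g):=K_{Z^{(k)}}W_k^*(A\otimes g)$, observes that positivity of the Pick matrix is exactly contractivity of this map and that it intertwines the compressed right free shifts, and then invokes commutant lifting for row contractions \cite{Ball2001-lift,DL2010commutant} to extend it to a full Schur-class multiplier. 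Your lurking-isometry/colligation argument instead outsources the heavy lifting to the transfer-function realization theory for the free Schur class: the difference identity $K(Z,W)[P]=P+\sum_i Z_iK(Z,W)[P]W_i^*$ you rely on is sound (the paper itself verifies it inside the proof of Theorem \ref{freeAlpay}), and your Gram-identity bookkeeping is the standard route of \cite{BMV2}. What each buys: the paper's argument is shorter given commutant lifting, stays entirely within the kernel-map/co-invariant-subspace framework already built in Section \ref{rkanalysis}, and recycles verbatim into the free Leech theorem proved immediately after; yours yields more structure (an explicit realization formula for the interpolant) and avoids commutant lifting, at the cost of needing the realization theorem --- your word ``automatically'' in the claim that the transfer function lies in $\scr{L}_d(\H,\K)$ is precisely that nontrivial theorem, which you must either quote or prove by the standard dBR-kernel computation from unitarity of the colligation. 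Two caveats: the fallback of ``simply invoking'' \cite{BMV2} or \cite{Sha2017-Pick} is a citation of the statement being proved, not a proof (legitimate, since the paper acknowledges those sources, but it is not an argument); and the reduction of the right problem to the left one via $U_\dag$ needs care when the targets $W_k$ are operator-valued --- the safer course, and what the paper intends, is to rerun the same argument with left and right interchanged.
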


\begin{proof}
This is essentially the same commutant lifting proof of the Pick theorem due to D. Sarason \cite{Sarason-commlift}. Consider the left case. The necessity is as usual, if $F$ is a left free multiplier satisfying (NP), then
$I - F F ^* \geq 0$ so that
$$ \sum _{k,j =1} ^N (K_{Z ^{(k)}} \otimes I _\K ) ^*  (I - FF^* ) (K_{Z^{(j)}} \otimes I_\K ) \geq 0,$$ and it readily follows that the Pick matrix map in the theorem statement is positive semi-definite.

The converse follows by considering the (scalar) right multiplier co-invariant subspaces:
$$ M _1 ^\perp := \bigvee _{1 \leq k \leq N} K_{Z^{(k)}} \C ^{n_k \times n_k} \otimes \K, $$
$$ M_2 ^\perp := \bigvee _{1 \leq k \leq N} K_{Z^{(k)}} \C ^{n_k \times n_k} \otimes \H,$$  and defining the linear operator
$$ F ^* K_{Z^{(k)}} (A \otimes g) := K_{Z^{(k)}} W_k ^* (A \otimes g) \in M_2 ^\perp, $$ where $A \in \C ^{n_k\times n_k}$.  It is easy to check that $F^*$ intertwines the restrictions of $R^* \otimes I_\K$ and $R^* \otimes I_\H$ to $M_1 ^\perp$ and $M_2^\perp$. Since the Pick matrix corresponding to the interpolation data $W_k$ is assumed to be positive semi-definite, $F$ is a contraction which intertwines the compressions of the operator-valued right free shifts $R \otimes I_\H$  and $R \otimes I_\K$ to $M_2 ^\perp$ and $M_1 ^\perp$ respectively. By commutant lifting, there is a Schur class $\hat{F} \in \scr{L } _d$  so that $\hat{F} ^* | _{M_1 ^\perp} = F^*$, and $\hat{F}$ solves the Nevanlinna-Pick problem \cite{Ball2001-lift,DL2010commutant}.
\end{proof}

\begin{cor}{ (free Leech theorem)}
Let $A, B$ be two free functions from $\B _\N ^d$ to $\C _{nc} \otimes \L (\H _1 , \J)$ and $\C _{nc} \otimes \L (\H _2 , \J)$,
respectively, and assume that
$$ B(Z)(K(Z,W)\otimes I_{\H _2}) B(W) ^* - A(Z) (K(Z,W) \otimes I_{\H _1} ) A(W) ^*, $$ is a CPNC kernel on $\B _\N ^d$. Then there is a left free Schur multiplier $C \in \scr{L} _d (\H_1 , \H _2)$ so that $A = BC$.
Alternatively, if 
$$ K(Z,W) \otimes \mr{id}_\J \left[ B(Z) ( [\cdot ] \otimes I_{\H _2} ) B(W) ^* \right]  - K(Z,W) \otimes \mr{id} _\J  \left[ A(Z) ( [\cdot] \otimes I_{\H _1} ) A(W) ^* \right] \geq 0, $$ is a positive NC kernel then there is a right free Schur multiplier $C ^\dag (R) \in \scr{R} _d ( \H _1 , \H _2  )$ so that $A(Z) = B(Z) \bullet _R C(Z)$.
\end{cor}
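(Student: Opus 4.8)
The plan is to prove the left statement by the same commutant lifting argument that established the free Pick theorem above, now phrased for the operator-valued multipliers $A$ and $B$; the right statement then follows by the left--right symmetry (conjugation by the transpose unitary $U_\dag$ together with the $\bullet_R$ product of Remark~\ref{rightproduct}), or by rerunning the argument verbatim with $R$ in place of $L$. Concretely, I want to produce a contractive left multiplier $C \in \scr{L}_d(\H_1, \H_2)$ with $M^L_A = M^L_B M^L_C$, equivalently $A(Z) = B(Z) C(Z)$ on $\B^d_\N$; passing to adjoints it suffices to build $(M^L_C)^*$ satisfying $(M^L_C)^*(M^L_B)^* = (M^L_A)^*$.

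First I would recall from Subsection~\ref{freedBRspace} that the two summands of the hypothesis are exactly the CPNC kernels $K^{\scr{M}^L(B)}$ and $K^{\scr{M}^L(A)}$ of the operator-range spaces $\scr{M}^L(B)$ and $\scr{M}^L(A)$. Using the left analogue of the identity $((M^R_G)^* K_Z)^*((M^R_G)^* K_W) = K(Z,W)[G(Z)\,\cdot\,G(W)^*]$ from Subsection~\ref{leftrightmult}, one has, for kernel vectors with $g,g' \in \J$,
\[
\ip{(M^L_B)^*(\kz \otimes g)}{(M^L_B)^*(\kw \otimes g')} = \ip{y \otimes g}{\big(B(Z)(K(Z,W)[vu^*] \otimes I_{\H_2})B(W)^*\big)\, x \otimes g'},
\]
so that the sesquilinear form of $M^L_B (M^L_B)^*$ on kernel vectors recovers $K^{\scr{M}^L(B)}$, and likewise for $A$. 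Since kernel vectors are total, the hypothesis that $B(Z)(K(Z,W)\otimes I_{\H_2})B(W)^* - A(Z)(K(Z,W)\otimes I_{\H_1})A(W)^*$ is a CPNC kernel is then \emph{equivalent} to the operator inequality
\[
M^L_A (M^L_A)^* \leq M^L_B (M^L_B)^*.
\]
I expect establishing this equivalence to be the crux of the argument, as it is what converts the kernel hypothesis into the norm bound that makes the construction contractive; everything afterward is formal.

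Next I would set $\mc{M} := \ov{\ran{(M^L_B)^*}} \subseteq F^2_d \otimes \H_2$. Because $M^L_B$ intertwines $L \otimes I_{\H_2}$ with $L \otimes I_\J$, its adjoint intertwines the backward shifts, and hence $\mc{M}$ is co-invariant for $L \otimes I_{\H_2}$. On $\mc{M}$ define $D$ by $D\,(M^L_B)^* \eta := (M^L_A)^* \eta$ for $\eta \in F^2_d \otimes \J$. The operator inequality above gives $\|(M^L_A)^*\eta\| \leq \|(M^L_B)^*\eta\|$, which simultaneously shows that $D$ is well defined and contractive. Moreover, since $(M^L_B)^*$ and $(M^L_A)^*$ intertwine $L^* \otimes I_\J$ with $L^* \otimes I_{\H_2}$ and $L^* \otimes I_{\H_1}$ respectively, $D$ intertwines $(L^* \otimes I_{\H_2})|_{\mc{M}}$ with $L^* \otimes I_{\H_1}$ on the dense range of $(M^L_B)^*$, hence on all of $\mc{M}$.

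Finally I would invoke commutant lifting for row contractions \cite{Ball2001-lift,DL2010commutant}, exactly as in the Pick proof: the contraction $D$, intertwining the compressed backward shifts, lifts to a contractive operator on $F^2_d \otimes \H_2$ intertwining the full $L \otimes I$'s and restricting to $D$ on $\mc{M}$. Such an intertwiner is the adjoint of a left multiplier, i.e.\ equals $(M^L_C)^*$ for some $C \in \scr{L}_d(\H_1, \H_2)$. By construction $(M^L_C)^*(M^L_B)^* = D\,(M^L_B)^* = (M^L_A)^*$, whence $M^L_A = M^L_B M^L_C = M^L_{BC}$ and therefore $A(Z) = B(Z)C(Z)$, as claimed.
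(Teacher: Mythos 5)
Your argument treats $M^L_A$ and $M^L_B$ as bounded operators from the outset: the step you call the crux, namely the equivalence of the kernel hypothesis with $M^L_A(M^L_A)^* \leq M^L_B(M^L_B)^*$, as well as the subspace $\mc{M} = \ov{\ran{(M^L_B)^*}}$ and the map $D$ defined on it, all presuppose that $A$ and $B$ are bounded multipliers. But the corollary assumes only that $A$ and $B$ are \emph{free functions} with values in $\C _{nc} \otimes \L (\H _1 , \J )$ and $\C _{nc} \otimes \L (\H _2 , \J )$ satisfying the kernel positivity; no multiplier hypothesis is made, so $M^L_A$ and $M^L_B$ need not exist as bounded operators and the operator inequality you want to extract does not even parse. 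This extra generality is the entire point of a Leech theorem: the paper's remark immediately after the statement describes it as an extension of the free Douglas Factorization Property (Theorem \ref{FDFP}), which is precisely the bounded-multiplier case that your argument would establish, and the application in Theorem \ref{freeAlpay} invokes the Leech theorem with $B(Z) = \bbm I_n, & f(Z)Z \ebm$ where $f$ is an arbitrary contractive element of $H^2 (\B ^d _\N )$, which is in general not a multiplier. (There is also a smaller slip: $(M^L_B)^*$ intertwines the adjoints of the \emph{right} free shifts $R^* \otimes I$, not $L^* \otimes I$, since left multipliers commute with right shifts but not with left shifts; accordingly it is an $R^*$-intertwiner that commutant lifting converts into a left multiplier, exactly as in the paper's proof of the free Pick theorem.)

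The repair, and this is what the paper does, is never to form $M^L_A$ or $M^L_B$ at all: only the point values $A(Z), B(Z)$ enter. Fix finitely many points $Z^{(1)}, \dots, Z^{(N)}$ and define, on the span of the vectors $K_{Z^{(k)}}\left( B(Z^{(k)})^* (yv^* \otimes g) \right)$, the map sending $K_{Z^{(k)}}\left( B(Z^{(k)})^*(yv^*\otimes g)\right) \mapsto K_{Z^{(k)}}\left( A(Z^{(k)})^*(yv^* \otimes g)\right)$. The CPNC hypothesis, evaluated at these finitely many points, says exactly that this map is well defined and contractive, and it intertwines the restrictions of the adjoint right free shifts because $K$, $A$, $B$ are free functions; boundedness of $A$ and $B$ is never used. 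Commutant lifting then yields a Schur multiplier $\hat C _N \in \scr{L} _d (\H _1 , \H _2)$ with $A(Z^{(k)}) = B(Z^{(k)}) \hat C _N (Z^{(k)})$ for $1 \leq k \leq N$. Finally, running the $Z^{(k)}$ through a matrix-norm dense sequence in $\B ^d _\N$ and using WOT-compactness of the Schur class, one extracts a WOT-limit $C \in \scr{L} _d (\H _1 , \H _2)$ with $A(Z) = B(Z) C(Z)$ on a dense set of points, hence everywhere by continuity. Your lifting step and conclusion survive in this formulation; what must change is the space on which the initial contraction is defined --- spans of kernel vectors twisted by the point values of $B$, rather than the range of $(M^L_B)^*$.
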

In the above, recall that $\bullet _R$ denotes the `right product' of operator-valued free holomorphic functions, see Remark \ref{rightproduct}. The free Leech theorem can be viewed as a sort of extension of the Douglas Factorization Lemma \cite{DouglasLemma} (or really of the free Douglas Factorization Property, Theorem \ref{FDFP}). 
\begin{proof}
The proof is similar to the that of the abelian Leech theorem in \cite[Theorem 3.1]{Alpay}, we will prove the left version: Choose $Z^{(k)} \in \B ^d _{n_k}$ for $1 \leq k \leq N$. Define the (scalar) right multiplier co-invariant spaces:
$$ \K _2 ^N := \bigvee _{1\leq k \leq N} \ran{K_{Z^{(k)}} (B(Z^{(k)}) ^* \ \cdot )}, $$ and 
$$ \K _1 ^N:= \bigvee _{1\leq k \leq N} \ran{K_{Z^{(k)}} (A(Z^{(k)}) ^* \ \cdot )}, $$ respectively.
Since 
$$ G(Z,W) := B(Z)(K(Z,W)\otimes I_\J) B(W) ^* - A(Z) (K(Z,W) \otimes I_\J ) A(W) ^* \geq 0,$$ is assumed to be positive semi-definite, it follows that the linear map $C_N ^* : \K _2 ^N \rightarrow \K _1 ^N$ defined by 
$$   C_N^* K_Z (B(Z) ^* (yv^* \otimes g)  )= K_Z (A(Z) ^* (yv^* \otimes g)  ), $$
is a contraction. It is also clear that $C_N^*$ intertwines the restrictions of the adjoints of the operator-valued right free shifts $R_k  \otimes I_{\H _1}$ and $R_k  \otimes I_{\H _2}$, and the positivity assumption implies that $C_N$ is a contraction. As before, we apply commutant lifting to conclude that $C_N ^*$ is the restriction of the adjoint of some $\hat{C} _N \in \scr{L} _d (\H _1, \H _2)$ which obeys 
$$ A(Z ^{(k)}) = B(Z ^{(k)}) C_N (Z ^{(k)}); \quad \quad 1 \leq k \leq N. $$
Now choose a sequence $Z^{(k)} \in \B ^d _{n_k},$ for every $k \in \N$ so that this sequence is matrix-norm dense in $\B ^d _\N$. For each $N$, applying the previous analysis to the finite sequence $ ( Z^{(k)} ) _{k=1} ^N$ produces a sequence $\hat{C} _N$ of left multipliers which are uniformly norm bounded (by one since they are all Schur). By WOT-compactness, there is a subsequence $C_{n} := \hat{C} _{N_n}$ which converge in the weak operator topology to some $C \in \scr{L} _d (\H _1 , \H _2 )$. It follows that for a matrix-norm dense set of $Z \in \B ^d _\N$,
$$ A(Z) = B(Z)C(Z), $$ so that by continuity this holds for all $Z \in \B ^d _\N$, and $A = BC$. Proof of the right Leech theorem is analogous.
\end{proof}

As in \cite{Alpay}, or \cite[Theorem 1.1]{Smirnov}, this can be applied to give a Smirnov characterization of $F^2 _d$:

\begin{thm} \label{freeAlpay}
Suppose that $F \in H^2 (\B ^d _\N )$ and $\| F \| _{H^2} \leq 1$. Then there are left Schur $A(L), B(L) \in \scr{L} _d$, and right Schur $C(R), D(R) \in \scr{R} _d$ so that $B _\emptyset, D_\emptyset =0$ and
$$ F(Z) = A(Z) (I - B(Z) ) ^{-1} = (I - D^\dag (Z) ) ^{-1} C^\dag (Z). $$
\end{thm}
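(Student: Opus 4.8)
The plan is to transcribe the proof of \cite[Theorem 10.3]{Alpay} into the free setting, manufacturing genuine free multipliers from the free Pick theorem together with the WOT-compactness device already used in the proof of the free Leech theorem above. I would treat the left factorization $F(Z)=A(Z)(I-B(Z))^{-1}$ in detail; the right factorization follows by running the identical argument with right Schur multipliers and the right free Pick theorem, or by invoking the left/right symmetry noted earlier.

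\emph{Reduction.} First I would observe that it suffices to produce a contractive \emph{column} left multiplier $\bbm A \\ B \ebm \in \scr{L} _d (\C , \C ^2)$ with $B _\emptyset = 0$ and $F(Z)=A(Z)(I-B(Z))^{-1}$. Indeed, each component of a contractive column is itself Schur (from $A^*A+B^*B \le I$), so $A,B \in \scr{L} _d$; and since $B$ is Schur with $B_\emptyset = 0$, Remark \ref{strict} shows $B$ is strictly contractive on $\B ^d _\N$, whence $I-B$ is outer by Lemma \ref{freeouter} and pointwise invertible by Lemma \ref{freeouter1}. Thus the ratio $A(I-B)^{-1}$ is a well-defined free holomorphic function, and it suffices to match $F$ on a matrix-norm dense set.

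\emph{Construction via finite Pick problems.} Mirroring the proof of the free Leech theorem, I would fix a matrix-norm dense sequence $Z^{(k)} \in \B ^d _{n_k}$ with $Z^{(1)}=0$, and at each finite stage prescribe column data $\Phi _k := \bbm a_k \\ b_k \ebm$ subject to $b_1 = 0$ (to force $B_\emptyset = 0$ in the limit) and $a_k = F(Z^{(k)})(I-b_k)$ (so the interpolant realizes the desired ratio at the nodes, the free variable being $b_k$). The free Pick theorem, applied with $\H = \C$ and $\K = \C ^2$, then yields a contractive column multiplier matching this data precisely when the Pick matrix $\bbm K(Z^{(k)},Z^{(j)}) \otimes I_{\C ^2} - \Phi _k \left( K(Z^{(k)},Z^{(j)}) \otimes I_\C \right) \Phi _j ^* \ebm _{k,j} \geq 0$. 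Every solution is automatically a contraction, so these interpolants are uniformly norm bounded, and by WOT-compactness of the Schur ball a WOT-convergent subnet produces $\bbm A \\ B \ebm \in \scr{L} _d(\C,\C^2)$ satisfying $A(Z)=F(Z)(I-B(Z))$ on the dense node set, hence everywhere, giving $F = A(I-B)^{-1}$ with $B_\emptyset = 0$.

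\emph{Main obstacle.} The crux is choosing the node data $b_k$ so that the Pick matrix above is positive semidefinite, using only the single global bound $\|F\|_{H^2}\le 1$. As in \cite{Alpay,Smirnov}, the natural choice makes each $\bbm a_k \\ b_k \ebm$ a pointwise contraction with $a_k = F(Z^{(k)})(I-b_k)$ (the scalar commutative analogue being $1-b = (1+|F|^2)^{-1}$); here the origin constraint $b_1 = 0$, $a_1 = F(0)$ is consistent because $\|F(0)\| = |F_\emptyset| = |\ip{1}{F}| \le \|F\|_{H^2} \le 1$. The difficulty is that the off-diagonal blocks couple distinct nodes through the free Szeg\"o kernel $K(Z^{(k)},Z^{(j)})$, so pointwise contractivity of each $\Phi_k$ is not by itself enough; the positivity of the full coupled matrix must be extracted from $\|F\|_{H^2}\le 1$ via the reproducing identity $\ip{\kz}{F} = \ipcn{y}{F(Z)v}$. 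I expect this to be the only genuinely non-formal step, and it is exactly where the free function theory replaces the classical outer-function construction $1-b=(1+|F|^2)^{-1/2}$, which is unavailable in the non-commutative setting because such a prescribed-modulus object is not a free holomorphic multiplier.
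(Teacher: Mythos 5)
There is a genuine gap, and you name it yourself: you never construct the node data $b_k$, and you never establish positivity of the coupled Pick matrix, deferring it as "the only genuinely non-formal step." That step is not a technicality to be expected to work out --- it is the entire content of the theorem, and your plan has a structural problem that makes it hard to complete as stated: in your scheme the unknown $B$ must be guessed \emph{before} the Pick matrix can even be written down, whereas its positivity encodes the global condition $\| F \| _{H^2} \leq 1$, which no pointwise choice of contractions $\bbm a_k \\ b_k \ebm$ at the nodes can capture. As you yourself note, the classical prescription $1-b = (1+|F|^2)^{-1}$ has no free holomorphic analogue, so your construction has no candidate data to feed into the free Pick theorem.

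The paper's proof (following \cite[Theorem 10.3]{Alpay} and \cite{Smirnov}) dissolves this obstacle by making $B$ an \emph{output} of a Leech-type factorization rather than an input to an interpolation problem. The key is the algebraic identity for the free Szeg\"{o} kernel,
\begin{equation*}
Z \left( K(Z,W)[P] \otimes I_d \right) W^* \ = \ \sum _{\alpha \neq \emptyset} Z^\alpha P (W^*) ^{\alpha ^\dag} \ = \ K(Z,W)[P] - P,
\end{equation*}
combined with the row multiplier $\Phi (Z) := \bbm I_n, & F(Z) Z \ebm$. A direct computation then gives
\begin{equation*}
\Phi (Z) \bbm K(Z,W)[P] & 0 \\ 0 & K(Z,W)[P] \otimes I_d \ebm \Phi (W)^* \ - \ F(Z) K(Z,W)[P] F(W)^* \ = \ K(Z,W)[P] - F(Z) P F(W)^*,
\end{equation*}
and the right-hand side is a CPNC kernel \emph{precisely because} $\| F \| _{H^2} \leq 1$, by the NC-RKHS membership criterion (Lemma \ref{NCRKHSfact}) --- this is the bridge from the $H^2$ bound to kernel positivity that your proposal lacks. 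The free Leech theorem (which the paper proves by exactly the Pick-plus-WOT-compactness mechanism you describe, so your scaffolding is not wasted) now applies with no choices to be made, producing a Schur class $\Psi \in \scr{L} _d (\C \oplus \C ^d)$ with $\Phi \Psi = \bbm F, & 0, & \cdots, & 0 \ebm$. Reading off the first column gives $F = \Psi _{11} + F Z \Psi _{21}$, hence $F = A (I - B)^{-1}$ with $A := \Psi _{11}$ and $B := Z \Psi _{21}$; note that $B _\emptyset = 0$ is automatic from the factor $Z$, rather than a constraint you must impose at a distinguished node. Your reduction paragraph (components of a contractive column are Schur, and $B _\emptyset = 0$ together with Remark \ref{strict} and Lemmas \ref{freeouter}, \ref{freeouter1} gives outerness and pointwise invertibility of $I - B$) is correct and matches the paper's endgame, but without the identity above the core of your argument cannot be completed.
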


Note that the above does not necessarily imply that $(I-B) ^{-1} \in F^2 _d$, although $(I-B(Z))$ is necessarily invertible on the NC unit ball, by Lemma \ref{freeouter} and Lemma \ref{freeouter1}. Recall that Corollary \ref{denominf2} provides an alternate Smirnov factorization of any $F \in F^2 _d$ as $F(Z) = B(Z) A(Z) ^{-1}$ where $A,B \in \scr{L} _d$, $A$ is outer, and $1/A \in F^2 _d$ (and there is also a corresponding right factorization).

The following is a general fact from NC-RKHS theory, it is the non-commutative analogue of a classical RKHS result, \cite[Theorem 10.17]{Paulsen-rkhs}. 

\begin{lemma} \label{NCRKHSfact}
    Let $\H _{nc} (K)$ be a NC-RKHS on a NC set $\Om$. A free NC function $f$ on $\Om$ belongs to
$\H _{nc} (K)$ if and only if $\la ^2 K(Z,W) - f(Z) (\cdot ) f(W) ^* \geq 0$ is a CPNC kernel on $\Om$ for some $\la ^2 >0$. The norm
of $f$ is the infimum of all such $\la$.
\end{lemma}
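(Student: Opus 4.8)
The plan is to mirror the classical argument of \cite[Theorem 10.17]{Paulsen-rkhs}, with scalar point evaluations replaced by the NC kernel vectors $\kz$ and the positive-kernel hypothesis replaced by complete positivity of a difference of NC kernels. Write $K^f(Z,W)[P] := f(Z) P f(W)^*$ for the (manifestly completely positive) NC kernel determined by the free function $f$, and set $L_\la := \la^2 K - K^f$; the hypothesis is that $L_\la$ is a CPNC kernel for some $\la > 0$. Since $K$ and $K^f$ both respect the grading and intertwinings, so does $L_\la$, and by the NC Moore--Aronszajn correspondence of \cite{BMV} the assertion that $L_\la$ is a CPNC kernel is equivalent to positivity of its associated Hermitian form on the dense span $\mc{D} _0 := \bigvee _{Z,y,v} \kz$ of kernel vectors. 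Concretely, $L_\la$ is a CPNC kernel if and only if $\sum _{i,j} \ipcn{y_i}{L_\la (Z^{(i)}, Z^{(j)})[v_i v_j^*]\, y_j} \geq 0$ for every finite family $\{ (Z^{(i)}, y_i, v_i) \}$ with $Z^{(i)} \in \B ^d _{n_i}$ and $y_i, v_i \in \C ^{n_i}$.

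First I would carry out the key identification of the two pieces of this form. Writing $g := \sum _i K \zyv{Z^{(i)}, y_i, v_i} \in \mc{D} _0$, the defining inner product of $\H _{nc} (K)$ shows that the $\la^2 K$-part equals $\la ^2 \| g \| ^2$. For the $K^f$-part, expanding $K^f (Z^{(i)}, Z^{(j)})[v_i v_j ^*] = f(Z^{(i)}) v_i v_j ^* f(Z^{(j)}) ^*$ and using the rank-one structure collapses the double sum to $\big| \sum _i \ipcn{y_i}{f(Z^{(i)}) v_i} \big| ^2$. By the reproducing formula \eqref{eqn:kz-reproducing-formula}, each summand equals $\ip{K \zyv{Z^{(i)}, y_i, v_i}}{f}$, so on defining $\Lambda (g) := \sum _i \ipcn{y_i}{f(Z^{(i)}) v_i}$ --- a quantity built only from the values of the free function $f$, hence well defined prior to knowing that $f$ lies in the space --- the positivity of $L_\la$ becomes exactly $|\Lambda (g)| ^2 \leq \la ^2 \| g \| ^2$ for all $g \in \mc{D} _0$.

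For the sufficiency direction I would assume this inequality. Then $\Lambda$ is a well-defined linear functional on $\mc{D} _0$ (if $\| g \| = 0$ then $\Lambda (g) = 0$) of norm at most $\la$, so it extends by continuity to all of $\H _{nc} (K)$; by the Riesz lemma there is a unique $\wt{f} \in \H _{nc} (K)$ with $\Lambda (g) = \ip{g}{\wt{f}}$ and $\| \wt{f} \| \leq \la$. Comparing $\Lambda (\kz ) = \ip{\kz}{\wt f}$ with the reproducing formula gives $\ipcn{y}{\wt f (Z) v} = \ipcn{y}{f(Z) v}$ for all $Z, y, v$, whence $\wt f = f$ as free functions and $f \in \H _{nc} (K)$ with $\| f \| \leq \la$. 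For necessity, if $f \in \H _{nc} (K)$ with $\| f \| \leq \la$ then the reproducing formula gives $\Lambda (g) = \ip{g}{f}$, Cauchy--Schwarz yields $|\Lambda (g)| \leq \| g \| \, \| f \| \leq \la \| g \|$, and reversing the identification of the first paragraph shows $L_\la$ is a CPNC kernel. Since $\mc{D} _0$ is dense, the norm of the functional $g \mapsto \ip{g}{f}$ is exactly $\| f \|$, so the infimum of admissible $\la$ equals $\| f \| _{\H _{nc} (K)}$.

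The step I expect to be the main obstacle is the reduction invoked in the first paragraph: justifying that complete positivity of the NC kernel $L_\la$ is detected by the scalar inequalities above, i.e. that testing on rank-one outer products $v_i v_j ^*$ across all points (including the direct sums permitted by the NC structure) suffices to certify that $L_\la$ is a CPNC kernel. This is precisely the content of the NC-RKHS and Kolmogorov-decomposition machinery of \cite{BMV}; once it is in hand, the remainder is the formal Riesz-representation argument above.
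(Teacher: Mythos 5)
Your proof is correct. The paper never actually proves this lemma --- it is stated as a general fact from NC-RKHS theory with a pointer to its classical antecedent \cite[Theorem 10.17]{Paulsen-rkhs} --- and your argument is precisely the expected noncommutative adaptation of that classical proof: identify the $\la ^2 K$-part of the positivity form with $\la ^2 \| g \| ^2$ for $g$ a finite sum of kernel vectors, collapse the $f(Z) (\cdot ) f(W)^*$-part on rank-one test matrices to $| \Lambda (g) | ^2$, and run the Riesz-representation argument, with the reduction of complete positivity to these scalar inequalities supplied by the direct-sum and rank-one structure of CPNC kernels in \cite{BMV}.
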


\begin{proof}{ (of Theorem \ref{freeAlpay})}
    The proof is formally analogous to that of \cite[Theorem 1.1]{Smirnov}: Suppose that $f \in H^2 (\B _\N ^d )$, assume without loss of generality that $\| f \| _{H^2 (\B _\N ^d )} \leq 1$, and for $Z \in \B ^d _n$ set
$$ \Phi (Z) := \bbm I_n, & f(Z) Z \ebm : H^2 (\B _\N ^d ) \oplus H^2 (\B _\N ^d ) \otimes \C ^d \rightarrow H^2 (\B _\N ^d ). $$
Then,
\begin{align*}  &  \Phi (Z) \bbm K(Z,W) & 0 \\ 0 & K(Z,W) \otimes I_d \ebm \Phi (W) ^* \\
 & =   K(Z,W) + f(Z) Z (K(Z,W) \otimes I_d ) W^* f(W) ^*  \\
& =  K(Z,W) + f(Z) K(Z,W) f(W) ^* - f(Z) f(W) ^*.  \end{align*} In the above we used that
$$ K(Z,W)[P] = \sum _{\alpha \in \F ^d} Z^\alpha P (W^*) ^{\alpha ^\dag }$$ to conclude that
\ba Z K(Z,W) [P] \otimes I_d W^* & = & \sum _{\substack{1\leq j \leq d \\ \alpha \in \F ^d}} Z^{j\alpha} P (W^*) ^{\alpha ^\dag j} \nn \\
& = & \sum _{\alpha \neq \emptyset } Z^\alpha P (W^*) ^{\alpha ^\dag} = K(Z,W)[P] - P. \nn \ea This is a familiar property shared with the abelian Szeg\"{o} kernel for Drury-Arveson space.
Hence,
 \begin{align*} &  \Phi (Z) \bbm K(Z,W) & 0 \\ 0 & K(Z,W) \otimes I_d \ebm \Phi (W) ^* \\
 &- [ f(Z),  \  \underbrace{0, \cdots , 0}_{\mbox{d times}} ] \bbm  K(Z,W) & 0 \\ 0 & K(Z,W ) \otimes I_d \ebm  \bbm f(W) ^* \\ 0 \\ \vdots \\ 0 \ebm   \\
 & =  K(Z,W) -f(Z) f(W) ^* \geq 0,  \end{align*} by Lemma \ref{NCRKHSfact} (since $\| f \| _{F^2 _d} \leq 1$).
By the (left) free Leech theorem, there is a left free Schur class $\Psi \in \scr{L} _d (\C \oplus \C^d )$ so that $\Phi \Psi = (f(Z) , 0, ... , 0)$:
$$ (f(Z), \mbf{0} _d) = \Phi (Z) \Psi (Z) =: \left[ I, \ f(Z) Z \right] \bbm \Psi _{11} (Z) & \Psi _{12} (Z) \\ \Psi _{21} (Z) & \Psi _{22} (Z) \ebm. $$
Hence, $$ f(Z) = \Psi _{11} (Z) + f(Z) Z \Psi _{21} (Z). $$
Solving for $f(Z)$ yields:
$$ f(Z) ( I - Z \Psi _{21} (Z) ) = \Psi _{11} (Z). $$ Setting $A = \Psi _{11}$, and $B = Z \Psi _{21}$ we have that $A,B \in \scr{L} _d$ are Schur with $B(0) = B_\emptyset =0$, so that $I - B$ is outer by Lemma \ref{freeouter}, and
$$ f(Z) =  A(Z) (I - B(Z) ) ^{-1}, $$ belongs to the left free Smirnov class.
The proof of the corresponding `right' statement is similar, but the algebra is slightly different:
For $Z \in \B ^d _n$, set 
$$ \Phi (Z) := \bbm I_n, & Z (f(Z) \otimes I_d) \ebm, $$ so that 
$$ M ^R _\Phi : H^2 (\B ^d _\N ) \oplus H^2 (\B ^d _\N ) \otimes \C ^d \rightarrow H^2 (\B ^d _\N ). $$ 
For $Z \in \B ^d _n$, $W \in \B ^d _m$ and $P \in \C ^{n\times m}$, consider the CPNC kernel: 
\ba & &  K(Z,W) \left[ \Phi (Z) \left( P \otimes I_{d+1} \right) \Phi (W) ^* \right]  = K(Z,W) +\nn \\
&  &   K(Z,W) \left[ \bbm Z_1, & \cdots & Z_d \ebm \bbm f(Z) P f(W) ^* & & \\ & \ddots & \\ & & f(Z) P f(W) ^* \ebm \bbm W_1 ^* \\ \vdots \\ W_d ^* \ebm \right] \nn \\
& = & K(Z,W) [P] + K(Z,W) \left[ \sum _{j=1} ^d Z_j f(Z) P f(W) ^* W_j ^* \right] \nn \\
& = & K(Z,W) [P] + K(Z,W)[ f(Z) P f(W) ^*] - f(Z) P f(W)^*; \quad \quad \mbox{(as before).} \nn \ea
As before, it follows that 
$$ K(Z,W) \left[ \Phi (Z) \left( \cdot \otimes I _{d+1}  \right) \Phi (W) ^* \right] - K(Z,W) [ f(Z) (\cdot ) f(W) ^* ] \geq 0, $$ is a CPNC kernel, so that the right free Leech theorem implies that there is a $$ \Psi (Z) := \bbm \psi  (Z) \\ \psi _1 (Z) \\  \ddots \\ \psi _d (Z) \ebm, $$ such that 
$$ f(Z) =   \Phi (Z) \bullet _R \Psi (Z), $$ and $\Psi  \in \scr{R} _d (\C , \C ^d )$. Equivalently,
$$ f(Z) = \psi (Z) + \psi _1 (Z) Z_1 f(Z) + \cdots + \psi _d (Z) Z_d f(Z). $$ Solving for $f(Z)$ yields:
$$ \left( I - \underbrace{(\psi _1 (Z) Z_1 + \cdots \psi _d (Z) Z_d )}_{=: D^\dag (Z)} \right) f(Z) = \psi (Z) =: C ^\dag (Z). $$ Clearly $D, C \in \scr{R} _d$ are such that $D_\emptyset = 0$, and 
$$ f(Z) = (I - D^\dag (Z) ) ^{-1} C^\dag (Z). $$ 
\end{proof}

\subsection{Null sets of multipliers}

For any $Z \in \B ^d _n$ and $y \in \C ^n$ define the right multiplier invariant spaces:
$$ \ran{K_Z}, \quad \quad \mbox{and} \quad  \ran{K  \{ Z , y , \cdot \} }. $$ (The left versions of all results in this subsection follow by interchanging `left' and `right'.) Also consider the spaces:
$$ E_Z := \bigcap \{ \ker{\phi (L) ^*} | \ \phi (L) \in L^\infty _d \ \mbox{and} \ \phi (Z) \equiv 0\}, $$ and
$$ E_{Z,y} := \bigcap \{ \ker{\phi (L) ^*} | \ \phi (Z) ^* y =0 \}. $$ 
Clearly, $E_Z \subseteq E_{Z,y}$. Also clearly, $\ran{K_Z} \subseteq E_Z$ and $\ran{K \{ Z , y , \cdot \}} \subseteq E_{Z,y}$.

\begin{cor} \label{kernull}
The spaces $E_Z$ and $E_{Z,y}$ are equal to $\ran{K_Z}, \ran{K \{ Z, y , \cdot \} }$, respectively.
\end{cor}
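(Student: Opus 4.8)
The plan is to pass to orthogonal complements and reduce the statement to an approximation problem. Fix $Z \in \B ^d _n$ and put $\mc{N} _Z := \{ f \in F^2 _d : f(Z) = 0 \}$ and $\mc{M} _Z := \big( \bigvee _{\phi (Z) = 0} \ran{\phi (L)} \big) ^{-\| \cdot \|}$, the closed span of the ranges of all left multipliers $\phi (L)$ vanishing at $Z$. Since $\ker{\phi (L) ^*} = \ran{\phi (L)} ^\perp$, we have $E_Z = \mc{M} _Z ^\perp$, while the reproducing formula (\ref{eqn:kz-reproducing-formula}) gives $\ip{\kz}{f} = \ipcn{y}{f(Z) v}$, so that $\ran{K_Z} = \mc{N} _Z ^\perp$; as $\ran{K_Z}$ is finite dimensional it is closed, and the claim $E_Z = \ran{K_Z}$ becomes the assertion $\mc{M} _Z = \mc{N} _Z$. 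One inclusion is immediate: $\phi (Z) = 0$ forces $(\phi (L) h)(Z) = \phi (Z) h(Z) = 0$, so every $\ran{\phi (L)}$ lies in the closed space $\mc{N} _Z$ and $\mc{M} _Z \subseteq \mc{N} _Z$. The content is the reverse inclusion.

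For the reverse inclusion I would argue by approximation and correction, the key point being that $A := \mathrm{span} \{ Z^\alpha : \alpha \in \F ^d \} = \mathrm{Alg}(I_n , Z_1 , ... , Z_d) \subseteq \C ^{n\times n}$ is finite dimensional. Choose words $S$ with $\{ Z^\alpha \} _{\alpha \in S}$ a basis of $A$ and define the linear section $s : A \rightarrow F^2 _d$ by $s \big( \sum _{\alpha \in S} c_\alpha Z^\alpha \big) := \sum _{\alpha \in S} c_\alpha L^\alpha 1$; then $s$ is bounded (its domain is finite dimensional), lands in the free polynomials, and satisfies $s(a)(Z) = a$ for all $a \in A$. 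Now take $f \in \mc{N} _Z$. The polynomials $\bigvee L^\alpha 1$ are dense in $F^2 _d$, so choose polynomials $f^{(N)} \to f$; since $f \mapsto f(Z) = K_Z ^* f$ is bounded, $f^{(N)}(Z) \to f(Z) = 0$. Set $p^{(N)} := f^{(N)} - s(f^{(N)}(Z))$. Then $p^{(N)}$ is a free polynomial with $p^{(N)}(Z) = f^{(N)}(Z) - f^{(N)}(Z) = 0$, so $p^{(N)}(L)$ is a left multiplier vanishing at $Z$ and $p^{(N)} = p^{(N)}(L) 1 \in \ran{p^{(N)}(L)} \subseteq \mc{M} _Z$; moreover $s(f^{(N)}(Z)) \to s(0) = 0$, so $p^{(N)} \to f$. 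Hence $f \in \mc{M} _Z$, giving $\mc{N} _Z \subseteq \mc{M} _Z$ and $E_Z = \ran{K_Z}$.

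The pointed statement $E_{Z,y} = \ran{K \zyv{Z,y,\cdot}}$ follows by the same scheme applied to $\mc{N} _{Z,y} := \{ f : f(Z) ^* y = 0 \}$ and $\mc{M} _{Z,y} := \big( \bigvee _{\phi (Z) ^* y = 0} \ran{\phi (L)} \big) ^{-\| \cdot \|}$, for which $E_{Z,y} = \mc{M} _{Z,y} ^\perp$ and $\ran{K \zyv{Z,y,\cdot}} = \mc{N} _{Z,y} ^\perp$. The one change, which I expect to be the only real obstacle, is the correction step: for $f \in \mc{N} _{Z,y}$ one only controls $f^{(N)}(Z) ^* y \to f(Z) ^* y = 0$ rather than $f^{(N)}(Z)$ itself, so one cannot cancel all of $f^{(N)}(Z)$. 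Instead one replaces $s$ by a bounded linear map $t$ from the image of $a \mapsto a^* y$ into the free polynomials with $t(w)(Z) ^* y = w$ (such $t$ exists precisely because $A$ is finite dimensional, so the surjection $a \mapsto a^* y$ admits a bounded section), and sets $p^{(N)} := f^{(N)} - t(f^{(N)}(Z) ^* y)$; then $p^{(N)}(Z) ^* y = 0$, whence $p^{(N)} \in \mc{M} _{Z,y}$, and $p^{(N)} \to f$. It is exactly this need to build corrections that simultaneously kill the obstruction at $Z$ and shrink in $F^2 _d$-norm where finite dimensionality of the evaluation algebra $A$ is essential.
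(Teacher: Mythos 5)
Your proof is correct, and it takes a genuinely different route from the paper's. The paper deduces the corollary from Theorem \ref{freeAlpay} (the free Smirnov characterization of $F^2_d$, which in turn rests on the free Leech theorem and commutant lifting): given $F \in E_Z \ominus \ran{K_Z}$, it writes $F = N(L)(I-D(L))^{-1}1$ with $N \in L^\infty_d$, $D$ Schur and $D_\emptyset = 0$, deduces $F(Z) = 0$ from orthogonality to $\ran{K_Z}$, hence $N(Z) = 0$ by invertibility of $I - D(Z)$, and then $N(L)^*F = 0$ by membership in $E_Z$, so that $\| F \| ^2 = \ip{N(L)^*F}{(I-D(L))^{-1}1} = 0$; in other words, the paper exhibits each function vanishing at $Z$ as lying \emph{exactly} in the range of a single bounded multiplier vanishing at $Z$. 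You instead dualize to the assertion $\mc{M}_Z = \mc{N}_Z$ and prove the hard inclusion by approximation: density of free polynomials plus a bounded correction built from the finite-dimensional evaluation algebra $A = \mathrm{span}\{Z^\alpha\}$. Your argument is elementary and self-contained --- no Smirnov factorization, Leech theorem, or commutant lifting --- and it yields slightly more, namely that the closed span of ranges of \emph{polynomial} multipliers vanishing at $Z$ (resp.\ satisfying $\phi(Z)^*y = 0$) already exhausts $\mc{N}_Z$ (resp.\ $\mc{N}_{Z,y}$); it also treats the pointed and unpointed cases uniformly, whereas the paper's pointed case needs an extra maneuver with right multipliers $G(R)1 = (I-B(L))p(L)1$. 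What the paper's route buys is brevity given the machinery already developed (the corollary sits in the section on the free Pick/Leech theorems partly to illustrate Theorem \ref{freeAlpay}) and an exact rather than approximate factorization. One small correction to your write-up: the map $a \mapsto a^*y$ on $A$ is conjugate-linear, not linear, so its bounded section $t$ (and hence your correction map) is conjugate-linear; this is harmless, since all you use is boundedness, $t(0)=0$, and the section identity $t(w)(Z)^*y = w$.
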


\begin{proof}
Suppose that $F \in E_Z \ominus \ran{K _Z}$. By the previous Smirnov characterization of $F^2 _d$, Theorem \ref{freeAlpay}, we can write $F = F(L) 1$ where
$$ F(L) = N(L) (I - D(L) ) ^{-1}, $$ $N \in L^\infty _d$, $D \in \scr{L} _d$ and $D_\emptyset = 0$. In particular, $1-D$ is outer by Lemma \ref{freeouter}. Since we assume that $F \perp K_Z (A)$ for any $A \in \C ^{n \times n
}$, we have that for any $y,v \in \C ^n$ that,
$$ 0 = \ip{K_Z (yv^*)}{F} = \ipcn{y}{F(Z) v}, $$ and it follows that $F(Z) \equiv 0$. Since $I_n - D(Z)$ is invertible, by Lemma \ref{freeouter1}, it follows that $N (Z) \equiv 0$. By definition since $F \in E_Z$, it follows that $N(L) ^* F =0$ so that
$$ \ip{F}{F}= \ip{N(L) ^*F}{(I-D(L)) ^{-1} 1} _{F^2 } = 0. $$

Similarly, suppose that $F \in E_{Z,y} \ominus \ran{K \{ Z , y, \cdot \}}$. Then, as before, writing $F=F(L) 1 = A(L) (I-B(L) ) ^{-1} 1$, we obtain that for any $v^* \in \C ^n$,
$$ 0  = \ip{\kz}{F}  = \ip{K \{ Z, A(Z) ^* y, v \} }{(I - B(L)) ^{-1} 1 }. $$ In particular, for any right multiplier $G(R) \in R^\infty _d$,
\ba 0 & = & \ip{K \{ Z,y, G(Z)v \} }{F} \nn \\
& = & \ip{K \{ Z,A(Z) ^*y, v \}}{ (I -B(L) ) ^{-1} G(R) 1}. \nn \ea In particular, we can choose $G(R) 1 = (I -B(L) ) p(L) 1$ for any non-commutative polynomial $p$,
so that $K \{ Z,A(Z) ^* y, v\} \equiv 0$. That is,
$$ 0 = \| K \{ Z,A(Z)^* y, v \} \| ^2 = \ipcn{A(Z) ^* y}{K(Z,Z)(vv^*)A(Z) ^* y},$$ for all $v  \in C^n$, which implies that
$$ 0 = \ipcn{A(Z) ^* y}{K(Z,Z) (I_n) A(Z) ^* y}.$$ Since $K(Z,Z)(I_n)$ is invertible, $A(Z) ^* y  =0$. Hence, by assumption $A(L) ^* F =0$. But then, as before
$$ \| F \| ^2 = \ip{A(L) ^* F}{(I-B(L ) ) ^{-1} 1 } = 0.$$
\end{proof}

\begin{remark}
By \cite[Theorem 7.2, Theorem 7.4]{KVV}, $T(Z)$ will be holomorphic if and only if it is locally bounded.
The fact that any $T \sim R^\infty _d$ acts as right multiplication by a free holomorphic function in the right free Smirnov class follows from Corollary \ref{main1}.

Finally, let us remark that the results of this subsection can also be used to prove that any $T \sim R^\infty _d$ acts as right multiplication by some free function $T(Z)$, if we make the additional key assumption that the kernel vectors are contained in the domain of $T^*$:
\end{remark}

\begin{cor} \label{Kercor}
    If $T \sim R^\infty _d$ and $\bigvee _{Z \in \B ^d _\N } \ran{K_Z} \subseteq \dom{T^*}$ then $T= M^R _{T(Z)}$ acts as right multiplication by a free function $T(Z)$.
\end{cor}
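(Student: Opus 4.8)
The plan is to combine the commutation of $T^*$ with the coanalytic operators $\phi(L)^*$ and the null-set description of kernel ranges from Corollary \ref{kernull} to show that $T^*$ preserves each slice $\ran{K\zyv{Z,y,\cdot}}$, and from this to read off a free function $T(Z)$ implementing $T$ as $M^R_{T(Z)}$.

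First I would establish the commutation relations. Since $T \sim R^\infty_d$ we have $TL_k \subseteq L_k T$ for each $k$, hence $Tp(L) \subseteq p(L)T$ for every free polynomial $p$. Approximating an arbitrary $\phi(L) \in L^\infty_d$ by the Ces\`aro sums $p_n(L) \to \phi(L)$ of its free Fourier series (which converge in SOT) and using that $T$ is closed, one upgrades this to $T\phi(L)\subseteq \phi(L)T$: if $x \in \dom{T}$ then $p_n(L)x \to \phi(L)x$ and $Tp_n(L)x = p_n(L)Tx \to \phi(L)Tx$, so $\phi(L)x \in \dom{T}$ with $T\phi(L)x = \phi(L)Tx$. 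Dualizing (the multiplier is bounded) yields $\phi(L)^*T^* \subseteq T^*\phi(L)^*$; in particular $\dom{T^*}$ is invariant under every $\phi(L)^*$ and $T^*$ commutes with these on $\dom{T^*}$.

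The heart of the argument is then the following. Fix $Z \in \B^d_n$ and $y,v \in \C^n$. By hypothesis $\kz \in \dom{T^*}$, and trivially $\kz \in \ran{K\zyv{Z,y,\cdot}} = E_{Z,y}$ by Corollary \ref{kernull}. For any $\phi(L) \in L^\infty_d$ with $\phi(Z)^*y = 0$, the definition of $E_{Z,y}$ gives $\phi(L)^*\kz = 0$, whence $\phi(L)^*T^*\kz = T^*\phi(L)^*\kz = T^*0 = 0$. Thus $T^*\kz$ lies in $\ker{\phi(L)^*}$ for every such $\phi$, i.e. $T^*\kz \in \bigcap\{\ker{\phi(L)^*}\mid \phi(Z)^*y=0\} = E_{Z,y} = \ran{K\zyv{Z,y,\cdot}}$, again by Corollary \ref{kernull}. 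Hence there is a vector $w = w(Z,y,v)$ with $T^*\kz = K\zyv{Z,y,w}$. Because this holds for every $y$ while $T^*$ is a single linear map, and since $\kz$ is additive in $y$ and linear in $v$, one checks that $w$ may be chosen independent of $y$ and linear in $v$, defining a matrix $T(Z)\in\C^{n\times n}$ with $T^*\kz = K\zyv{Z,y,T(Z)v}$. Since the diagonal kernel vectors are additive over direct sums and compatible with intertwiners, applying the fixed operator $T^*$ forces $Z \mapsto T(Z)$ to respect the grading and intertwinings, so it is a free NC function (holomorphic since locally bounded, cf. \cite{KVV}, or a posteriori since it then lies in $\scr{N}_d^+(R)$ by Corollary \ref{main1}). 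Finally, for any $x \in \dom{T}$ the reproducing formula \eqref{eqn:kz-reproducing-formula} gives
$$\ipcn{y}{(Tx)(Z)v} = \ip{\kz}{Tx} = \ip{T^*\kz}{x} = \ip{K\zyv{Z,y,T(Z)v}}{x} = \ipcn{y}{x(Z)T(Z)v}$$
for all $y,v$, so $(Tx)(Z) = x(Z)T(Z)$; that is, $T = M^R_{T(Z)}$ acts as right multiplication by $T(Z)$.

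The main obstacle I expect is the passage from the pointwise relations $T^*\kz = K\zyv{Z,y,w}$ to a single well-defined free holomorphic matrix function $T(Z)$: the kernel map $K_Z : \C^{n\times n}\to F^2_d$ need not be injective (its range is dual to $\{F(Z): F\in F^2_d\}$, which can be a proper subspace of $\C^{n\times n}$), so $w$ is only determined modulo $\ker{K_Z}$. Resolving this requires exploiting the \emph{simultaneous} preservation of all the $y$-slices $\ran{K\zyv{Z,y,\cdot}}$ by the single linear operator $T^*$ in order to select a $y$-independent, linear-in-$v$ representative, together with the direct-sum and intertwining compatibility of the kernel vectors to guarantee that the resulting $T(Z)$ genuinely assembles into a free function. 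Everything else is the routine commutant/Ces\`aro bookkeeping of the first paragraph and the reproducing-kernel computation of the last.
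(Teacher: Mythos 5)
Your proposal is correct and follows the paper's own proof essentially step for step: the hypothesis together with Corollary \ref{kernull} puts each $E_{Z,y}=\ran{K\zyv{Z,y,\cdot}}$ inside $\dom{T^*}$, affiliation makes $T^*$ preserve these slices, one reads off a matrix $T(Z)$ from $T^*\kz = K\zyv{Z,y,T(Z)v}$, and the intertwining property of NC kernels (if $\alpha Z = W\alpha$ then $K\zyv{W,x,\alpha v} = K\zyv{Z,\alpha ^* x,v}$) shows $T(Z)$ is a free function, whence $T=M^R_{T(Z)}$ by the reproducing formula. The differences are only of emphasis: you spell out the Ces\`aro-sum/closedness/dualization bookkeeping that the paper dismisses as ``clear,'' while the well-definedness issue you flag as the main obstacle (non-injectivity of $v\mapsto\kz$, possible $y$-dependence of the representative) is equally present and equally unaddressed in the paper's proof, which simply sets $T(Z)v:=v'$.
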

\begin{proof}
By Corollary \ref{kernull} and by assumption, $ E_{Z,y} = \bigvee _{v \in \C ^n} \kz \subset \dom{T^*}$ for any $Z \in B^d _n$ and $y \in \C ^n$. Also, since $T \sim R^\infty _d$, it is clear that 
$E_{Z,y}$ is invariant for $T^*$, so that
$$ T ^* \kz = K \{ Z, y , v' \}. $$ We can then define $T(Z) \in \C ^{n \times n}$ by the formula:
$$ T(Z)v  := v'. $$ It remains to show that $T(Z)$ is a free function. First, it is clearly graded since $T(Z) \in \C ^{n\times n}$ whenever $Z \in \B ^d _n$.  To show that $T$ acts as right multiplication by a free function, it remains to show that $T(Z)$ respects intertwinings. If $Z \in \B ^d _n, \ W \in \B ^d _m$, and $\alpha \in \C ^{m\times n}$ obey:
$$ \alpha Z = W \alpha, $$ then for any $Y \in \B ^d _\N$, we have that
$$ \alpha K(Z,Y) (P ) = K(W,Y) (\alpha P), $$ this a property of NC kernels \cite[Section 2.3]{BMV}. It follows from this that
$$ K\{ W, x , \alpha v  \} = K \{ Z,\alpha ^* x, v \}, $$ for any $v \in \C ^n, \ x \in \C ^m$. Hence,
\ba K \{ W,x, \alpha T(Z) v \} & = & K \{ Z, \alpha ^* x, T(Z)v \}  \nn \\
& = & T^* K \{ Z,\alpha ^* x, v \} \nn \\
& = & T^* K \{ W,x, \alpha v \} \nn \\
& = & K \{ W,x,T(W)\alpha v \} \nn, \ea and this proves that
$$ \alpha T(Z) = T(W) \alpha, $$ so that $T(Z)$ respects intertwinings.
\end{proof}

\bibliography{Smirnov}

\end{document}